\documentclass[11pt]{amsart}

\usepackage{amsmath,amssymb,amsthm,amsfonts,verbatim}
\usepackage{enumerate}
\usepackage[hidelinks]{hyperref}
\usepackage{graphicx}
\usepackage[utf8]{inputenc}
\usepackage{xcolor}
\usepackage{pinlabel}

\theoremstyle{plain}
\newtheorem{theorem}{Theorem}[section]
\newtheorem{lemma}[theorem]{Lemma}

\newtheorem{corollary}[theorem]{Corollary}
\newtheorem{proposition}[theorem]{Proposition}
\newtheorem{claim}[theorem]{Claim}

\theoremstyle{definition}
\newtheorem{definition}[theorem]{Definition}
\newtheorem{notation}[theorem]{Notation}

\newtheorem{construction}[theorem]{Construction}
\newtheorem{remark}[theorem]{Remark}

\newcommand{\C}{\mathcal{C}}
\newcommand{\RR}{\mathbb{R}}
\newcommand{\ZZ}{\mathbb{Z}}

\newcommand{\QQ}{\mathbb{Q}}
\newcommand{\NN}{\mathbb{N}}

\newcommand{\II}{\mathrm{I\kern -0.11ex I}}

\newcommand{\daggername}{fine curve graph}

\newcommand{\cd}{\mathcal{C}^\dagger}

\DeclareMathOperator{\Homeo}{Homeo}
\DeclareMathOperator{\mcg}{Map}
\DeclareMathOperator{\SL}{SL}
\DeclareMathOperator{\rot}{\rho}
\newcommand{\coloneq}{\mathrel{\mathop:}\mkern-1.2mu=}


\newcounter{notes}
%


\title{Rotation sets and actions on curves}

\author[Bowden]{Jonathan Bowden}
\email{jonathan.bowden@ur.de}
\author[Hensel]{Sebastian Hensel}
\email{hensel@math.lmu.de}
\author[Mann]{Kathryn Mann}
\email{k.mann@cornell.edu}
\author[Militon]{Emmanuel Militon}
\email{emmanuel.militon@univ-cotedazur.fr}
\author[Webb]{Richard Webb}
\email{richard.webb@manchester.ac.uk}

\begin{document}

\maketitle

\begin{abstract} 
Building on work of \cite{BHW}, we study the action of the homeomorphism group of a surface $S$ on the {\daggername} $\cd(S)$.  While the definition of $\cd(S)$ parallels the classical curve graph for mapping class groups, we show that the dynamics of the action of $\Homeo(S)$ on $\cd(S)$ is much richer:  homeomorphisms induce parabolic isometries in addition to elliptics and hyperbolics, and all positive reals are realized as asymptotic translation lengths. 

When the surface $S$ is a torus, we relate the dynamics of the action of a homeomorphism on $\cd(S)$ to the dynamics of its action on the torus via the classical theory of {\em rotation sets}.  We characterize homeomorphisms acting hyperbolically, show asymptotic translation length provides a lower bound for the area of the rotation set, and, while no characterisation purely in terms of rotation sets is possible, we give sufficient conditions for elements to be elliptic or parabolic.  
\end{abstract}

\section{Introduction} 

For a closed surface $S$ of genus $g \geq 1$, the classical {\em curve graph} $\C(S)$ has vertex set the isotopy classes of essential simple closed curves on $S$, with edges between pairs of isotopy classes that can be realized disjointly (a slight modification is needed for genus 1 surfaces).   Masur and Minsky \cite{MM1,MM2} showed that the graph $\C(S)$ is Gromov hyperbolic, a result which has become an essential tool in the study of the geometric and algebraic structure of the mapping class group $\mcg(S)$, see for example \cite{BehrstockM,Maher,BKMM,BBF, DGO}.

In \cite{BHW} three of the authors introduced the {\em \daggername } $\cd(S)$ to study the group of all homeomorphisms of $S$. This graph has essential simple closed curves as vertices, so admits a faithful action of $\Homeo(S)$ by isometries.   It is shown in \cite{BHW} that $\cd(S)$ is hyperbolic. This enables  large scale geometric techniques for studying $\Homeo(S)$ via its action on $\cd(S)$, for instance, stable commutator length and fragmentation norm on $\Homeo_0(S)$ are unbounded, answering a question posed by Burago, Ivanov, and Polterovich \cite{BIP}.

In this paper, we show that there is a rich correspondence between the dynamics of the induced action of a homeomorphism on $\cd(S)$ and its dynamics on the surface $S$ itself.  We first address this in a general setting, then specialize to the case of the torus where we study the interactions between the curve graph and the existing dynamical theory of {\em rotation sets} for torus homeomorphisms.   

\subsection{General results} 
Isometries of hyperbolic metric spaces admit a dynamical trichotomy as {\em elliptic}, {\em parabolic} or {\em hyperbolic} according to the asymptotic translation length and diameter of orbits (see Section~\ref{sec:background} for a review of definitions).   In the classical setting of $\C(S)$, it follows from \cite[Proposition~4.6]{MM1} and the Nielsen--Thurston classification  \cite{Thurston} that no mapping classes act parabolically.   By contrast, we show the following.  

\begin{theorem}[Parabolic examples] \label{thm:parabolicsexist}
For any closed orientable surface $S \neq S^2$ there exist isotopically trivial homeomorphisms of $S$ whose action on $\cd(S)$ is parabolic.  
\end{theorem}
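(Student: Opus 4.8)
The plan is to construct an isotopically trivial homeomorphism $f$ of $S$ that pushes curves "to infinity" in $\cd(S)$ only at a sublinear rate. Concretely, I would fix a small open disk $D \subset S$ and work with a homeomorphism $f$ supported in $D$ (so $f$ is visibly isotopically trivial, being supported in a disk). Inside $D$, take a sequence of smaller and smaller nested annuli $A_1 \supset A_2 \supset \cdots$ accumulating on a point, and let $f$ act on $A_n$ as a rotation-like or shearing map whose "complexity" (number of times it wraps an arc around the annulus, or the combinatorial distance it moves a fixed curve crossing $A_n$) is some slowly growing function — the point being that as $n \to \infty$ the number of Dehn-twist-like wraps in $A_n$ can be made arbitrarily large, but the geometry of $\cd(S)$ should absorb them at logarithmic (hence sublinear) speed. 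Equivalently, one picks $f$ so that for a fixed curve $\alpha$ crossing all the $A_n$, the orbit $\{f^k(\alpha)\}$ has unbounded but sublinearly growing distance to $\alpha$ in $\cd(S)$.

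The key steps, in order, are: (1) Choose a reference curve $\alpha$ and a nested sequence of annular regions in $D$ through which $\alpha$ passes; define $f$ by prescribing increasingly powerful twisting in the deeper annuli, arranged so that $f$ and all its powers remain homeomorphisms (uniform continuity at the accumulation point forces the twisting rate to be controlled, which is exactly what we want). (2) Show $d_{\cd(S)}(\alpha, f^k(\alpha)) \to \infty$: this uses that the $k$-th power performs at least $k$ twists in some annulus $A_{n(k)}$, and a twist with enough wrapping cannot be undone by a bounded-length path in $\cd(S)$ — here I would invoke the same kinds of "distance estimate" or "unicorn/surgery" arguments from \cite{BHW} used to prove hyperbolicity and infinite diameter, or cite a subsurface-projection-type lower bound to an annular factor. (3) Show the growth is sublinear: bound $d_{\cd(S)}(\alpha, f^k(\alpha))$ above by something like $C \log k$ or $o(k)$, using that a curve wrapping $N$ times around a thin annulus can be connected in $\cd(S)$ to one wrapping $N/2$ times by a path of uniformly bounded length (a single surgery across the annulus halves the wrapping number), so one reaches a wrapping number of $O(1)$ in $O(\log N)$ steps; combined with step (2) this gives that the asymptotic translation length is zero while orbits are unbounded, i.e.\ $f$ is parabolic. (4) Finally, handle the mild genus-1 modification in the definition of $\cd(S)$, and note the construction is uniform across all $S \neq S^2$ since everything happens inside a disk.

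The main obstacle I anticipate is step (3), the sublinear upper bound: I need a clean, uniform "one move halves the wrapping number in $\cd(S)$" lemma, which requires care because $\cd(S)$ uses actual (not isotopy classes of) curves, so the halving surgery must be realized by a genuine disjointness or low-intersection relation and the bounded path must be produced explicitly; the nested-annuli bookkeeping (ensuring the accumulation point gives a well-defined homeomorphism whose powers have exactly the prescribed wrapping) is the other delicate point. Step (2) should follow more routinely from existing machinery in \cite{BHW}, and the isotopic triviality is immediate from the support being a disk.
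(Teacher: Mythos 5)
There is a fatal flaw at the very first step: a homeomorphism $f$ supported in a disk $D \subset S$ can never act parabolically on $\cd(S)$, so the entire construction is dead on arrival. Since $S$ has genus $\geq 1$, there is an essential simple closed curve $\beta$ disjoint from $D$, and then $f^k \beta = \beta$ for all $k$, so $\beta$ is a global fixed point of the action and $f$ is elliptic. More to the point for your intended orbit: for the curve $\alpha$ crossing the annuli, one has
\[
d^\dagger(\alpha, f^k\alpha) \le d^\dagger(\alpha,\beta) + d^\dagger(\beta, f^k\alpha) = d^\dagger(\alpha,\beta) + d^\dagger(f^k\beta, f^k\alpha) = 2\, d^\dagger(\alpha,\beta),
\]
which is bounded. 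So your step (2) is false: the orbit of $\alpha$ cannot be unbounded. This is not a subtle issue with surgery or annular wrapping; it is a consequence of the basic fact that $\cd(S)$ has edges given by disjointness and any disk-supported map fixes a dense set of vertices. The paper itself records this fact (homeomorphisms supported in a disk act elliptically), and indeed uses it in the opposite direction: its first construction realises the putative parabolic $f$ as a $C^0$-limit of disk-supported (hence elliptic) homeomorphisms and invokes the continuity of asymptotic translation length (Theorem~\ref{thm:continuity}) to conclude $|f|=0$, which is a cleaner route to the "translation length zero" half than your logarithmic surgery estimate.

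The essential idea you are missing is that the support of a parabolic $f$ must \emph{fill} the surface in a topological sense: every essential simple closed curve must meet it, otherwise ellipticity is automatic by the argument above. The paper achieves this by letting the support of $f$ be the complement of a minimal filling geodesic lamination $\Lambda$ (or, in the second construction, by flowing along a minimal direction of a translation surface), with $f$ pushing points into the cusps of $\Lambda$. Minimality and fillingness of $\Lambda$ (and its lifts to finite covers, via Lemma~\ref{lem:small-diameter-one-cover}) are precisely what force every curve's $f$-orbit to eventually intersect every other curve and hence have unbounded diameter, while the cutoff/$C^0$-limit argument keeps the translation length zero. Your nested-annuli-accumulating-on-a-point picture has the support accumulating on a single point, which is the opposite of filling, and cannot be repaired without abandoning the "supported in a disk" setup entirely.
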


The proof of Theorem \ref{thm:parabolicsexist} is via explicit constructions.   
Hyperbolic and elliptic isometries are much easier to build: for the elliptic case, it is easy to define many homeomorphisms which fix a given curve, and many examples of hyperbolics are given in \cite{BHW}.  

Our next result shows that the action of $\Homeo(S)$ on $\cd(S)$ is dynamically very rich; it may be interpreted as giving some justification of the pervasiveness of hyperbolic examples.    

\begin{theorem}[Continuity]  \label{thm:continuity}
Let $S$ have genus $g \geq 1$.  Asymptotic translation length on $\cd(S)$ is a continuous function on $\Homeo(S)$.  
Consequently, 
\begin{enumerate}
\item all nonnegative real numbers can be realized as asymptotic translation lengths in $\Homeo_0(S)$, and 
\item hyperbolicity is an open condition in the $C^0$ topology.
\end{enumerate}
\end{theorem}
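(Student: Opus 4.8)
The plan is to show that asymptotic translation length $\ell$ is upper and lower semicontinuous on $\Homeo(S)$. Fix an essential simple closed curve $\alpha$ as a basepoint; recall that for an isometry $g$ of any metric space the quantity $\ell(g)=\lim_{n}d(\alpha,g^{n}\alpha)/n$ exists by subadditivity, equals $\inf_{n}d(\alpha,g^{n}\alpha)/n$, and is independent of the basepoint. The one geometric input I will use, beyond hyperbolicity of $\cd(S)$ from \cite{BHW}, is a local boundedness property: for each essential simple closed curve $\beta$ there is $\varepsilon_{\beta}>0$ so that any homeomorphism $\psi$ with $\sup_{x}d_{S}(\psi x,x)<\varepsilon_{\beta}$ satisfies $d_{\cd}(\beta,\psi\beta)\le 2$. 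To see this, take $U=\{x:d_{S}(x,\beta)<\varepsilon_{\beta}\}$, an annular neighbourhood of $\beta$; then $\psi\beta\subset U$, and since $S$ has genus at least one the complement $S\setminus U$ contains an essential simple closed curve $\gamma$ (the core of the complementary annulus when $g=1$, an essential curve in a positive-genus complementary piece when $g\ge 2$), and $\gamma$ is disjoint from both $\beta$ and $\psi\beta$. Since $\Homeo(S)$ with the $C^{0}$ topology is a topological group, for any fixed $g$ and $n$ the maps $g^{-n}h^{n}$ converge to the identity as $h\to g$, so $d_{\cd}(g^{n}\alpha,h^{n}\alpha)=d_{\cd}(\alpha,g^{-n}h^{n}\alpha)\le 2$ for all $h$ in a suitable $C^{0}$-neighbourhood of $g$.

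Upper semicontinuity is then immediate: given $g$ and $\varepsilon>0$, pick $n$ with $d_{\cd}(\alpha,g^{n}\alpha)<n(\ell(g)+\varepsilon/2)$ and $2/n<\varepsilon/2$; for $h$ near $g$ we obtain $d_{\cd}(\alpha,h^{n}\alpha)\le d_{\cd}(\alpha,g^{n}\alpha)+2<n(\ell(g)+\varepsilon)$, and hence $\ell(h)\le d_{\cd}(\alpha,h^{n}\alpha)/n<\ell(g)+\varepsilon$.

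Lower semicontinuity is the heart of the matter, and the step I expect to be the main obstacle: one needs a \emph{finite} certificate that $\ell(g)$ is large which survives $C^{0}$-perturbation, whereas the individual distances $d_{\cd}(\alpha,g^{n}\alpha)$ can drop under a perturbation. Here hyperbolicity of $\cd(S)$ is used essentially. If $g$ is loxodromic with $\ell(g)=L>0$, then the orbit $(g^{j}\alpha)_{j\in\ZZ}$ is a $(\lambda,c)$-quasigeodesic in $\cd(S)$ with constants depending only on $L$, the hyperbolicity constant of $\cd(S)$, and $d_{\cd}(\alpha,g\alpha)$. Fix $N$ larger than the threshold in the local-to-global principle for $(\lambda,c+4)$-quasigeodesics in a hyperbolic space. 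For $h$ close enough to $g$ the finitely many maps $h^{-j}g^{j}$ with $0\le j\le N$ all move points by less than $\varepsilon_{\alpha}$, so $d_{\cd}(h^{j}\alpha,g^{j}\alpha)\le 2$ for $0\le j\le N$; thus the window $(h^{j}\alpha)_{j=0}^{N}$ stays within distance $2$ of a $(\lambda,c)$-quasigeodesic at corresponding points and is therefore itself a $(\lambda,c+4)$-quasigeodesic. Applying the isometry $h^{i}$ shows that \emph{every} length-$N$ window $(h^{j}\alpha)_{j=i}^{i+N}$ of the $h$-orbit, $i\in\ZZ$, is a $(\lambda,c+4)$-quasigeodesic; by the local-to-global principle the whole orbit $(h^{j}\alpha)_{j\in\ZZ}$ is a quasigeodesic, so $h$ is loxodromic and $\ell(h)>0$. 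To upgrade this to $\ell(h)\ge L-\varepsilon$ I would rerun the argument with $g^{n}$ in place of $g$ for $n$ large: since the orbit of $g^{n}$ has consecutive jumps of length $\approx nL$, it is a quasigeodesic whose multiplicative constant can be taken arbitrarily close to $1$, and the local-to-global output then yields $\ell(h^{n})=n\ell(h)\ge n(L-\varepsilon)$. The delicate point — and the reason for passing to high powers rather than working with $g$ directly — is the bookkeeping needed to guarantee that the local-to-global conclusion for the perturbed orbit retains a multiplicative constant tending to $1$; combined with the upper semicontinuity bound $\ell(h)\le L+\varepsilon$ this pins $\ell(h)$ near $L$.

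Finally, the two consequences follow formally. Since a homeomorphism acts hyperbolically on $\cd(S)$ exactly when its asymptotic translation length is positive, lower semicontinuity of $\ell$ shows that $\{f:\ f\ \text{hyperbolic}\}=\ell^{-1}((0,\infty))$ is $C^{0}$-open, proving (2). For (1), $\Homeo_{0}(S)$ is path-connected and contains the identity with $\ell(\id)=0$; by \cite{BHW} it also contains an element $f_{0}$ acting hyperbolically on $\cd(S)$, so $\ell(f_{0}^{N})=N\ell(f_{0})\to\infty$ as $N\to\infty$. Joining $\id$ to $f_{0}^{N}$ by a path in $\Homeo_{0}(S)$ and applying continuity of $\ell$ together with the intermediate value theorem realises every value in $[0,N\ell(f_{0})]$, and letting $N\to\infty$ gives every nonnegative real.
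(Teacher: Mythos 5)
Your approach is essentially the same as the paper's: upper semicontinuity follows from a local boundedness estimate (your ``$d_{\cd}(\beta,\psi\beta)\le 2$ for $C^0$-small $\psi$'' is the paper's Lemma~\ref{lem:distance-continuity}, used in Lemma~\ref{lem:transl_length}), and lower semicontinuity exploits hyperbolicity of $\cd(S)$ via the stability of orbit-quasigeodesics under perturbation, passing to powers $g^N$ so the error becomes additive and vanishes after dividing by $N$. The case $\ell(g)=0$ is implicitly handled by upper semicontinuity, as in the paper, and your derivation of consequences (1) and (2) is the same intermediate value theorem / openness argument.

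The one place where you are substantially less complete than the paper is exactly the step you flag as delicate: showing that after perturbing and invoking local-to-global, the output is a quasigeodesic with multiplicative constant tending to $1$ as $N\to\infty$, so that the final bound has the form $\ell(h)\ge N^{-1}(N\ell(g) - C)$ with $C$ fixed. You state this is ``the bookkeeping needed,'' but it is in fact the technical heart of the proof and cannot be waved at by citing the abstract local-to-global theorem, because the standard statement (e.g.\ BH III.H.1.13) only produces some global constants $(\lambda',\epsilon')$ and does not quantify how $\lambda'$ approaches $1$. The paper circumvents this by not invoking local-to-global as a black box: Lemma~\ref{lem:tunnel} first places the basepoint $\alpha$ within $2\delta$ of all the geodesics $[f^{-n}\alpha,f^n\alpha]$ and within $2\delta$ of $[f_m^{-N}\alpha,f_m^N\alpha]$, Lemma~\ref{lem:4delta} then shows that \emph{every} intermediate orbit point $(f_m^N)^i\alpha$ lies within $4\delta$ of $[(f_m^N)^{-n}\alpha,(f_m^N)^n\alpha]$ (plus monotonicity of projections, Remark~\ref{rem:4delta}), and the closing lemma literally slices the geodesic into $2n$ subintervals of length at least $|f^N|-2-8\delta$. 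That gives the additive-only error $2+8\delta$ explicitly. If you want to complete your version you would need to prove an analogous quantitative statement, essentially reproving the relevant piece of the fellow-travelling/slimness argument rather than citing the local-to-global principle; at that point the two proofs coincide.

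Two small points worth flagging. First, your orbit-quasigeodesic constants $(\lambda,c)$ for $g$ depend on the choice of basepoint $\alpha$ (its distance to an axis), so before running the argument you implicitly need to fix $\alpha$ once and for all and track that dependence; the paper's Lemma~\ref{lem:tunnel} makes a specific choice of $\alpha$ for exactly this reason. Second, your stated ``inf'' formula $\ell(g)=\inf_n d(\alpha,g^n\alpha)/n$ from subadditivity is correct and makes the upper semicontinuity argument clean, but note that it is the orbit-distance function, not $\ell$, that is subadditive --- worth saying explicitly since $\ell$ is otherwise defined as a limit.
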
 

Interestingly, while the set of elliptic mapping classes is not open, one can easily construct examples of open subsets of $\Homeo(S)$ consisting entirely of elliptics, see Construction \ref{const:openelliptics}.  Hence hyperbolicity is not generic in the $C^{0}$ topology.

We highlight another key difference to the classical setting of $\C(S)$ via Theorem~\ref{thm:continuity}. Indeed, Bowditch \cite{Bowditchtight} proved that asymptotic translation lengths on $\C(S)$ belong to $\frac{1}{m}\ZZ$ with $m$ depending only on $S$ i.e. they are uniformly rational. On the other hand  Theorem~\ref{thm:continuity} shows that all nonnegative reals occur as asymptotic translation lengths on $\cd(S)$.

\subsection{Torus homeomorphisms and rotation sets.}

In the case of the torus $T = \RR^2/\ZZ^2$, there is a well-developed theory of the dynamics of isotopically trivial homeomorphisms via their {\em rotation sets}, subsets of $\RR^2$ which, loosely speaking, measure the average displacement of points under iteration.  The influential paper \cite{MZ} of Misiurewicz and Ziemian sparked a general program to relate the dynamics of torus homeomorphisms to the geometric and topological properties of their rotation sets.  
Their work shows that rotation sets are compact and convex, so these are either points, line segments or have nonempty interior.  Each of these do in fact arise:  by considering homeomorphisms of the torus which preserve a foliation by circles and act like a rotation on each of those circles, it is easy to produce examples of homeomorphisms whose rotation set is a singleton or a segment of rational slope containing rational points.	Homeomorphisms whose rotation set is a segment with irrational slope were constructed by Katok (see \cite[Example 1.4]{Handel} and \cite{Kwapisz}); while 
Avila announced a construction of an example whose rotation set is a segment contained in a line of irrational slope and with no rational point.  Le Calvez and Tal \cite{LeCalvezTal} introduced a new orbit forcing theory and proved, among other results, that an irrational slope line rotation set cannot have a rational point in its interior, which verifies a case of the Franks--Misiurewicz conjecture \cite{FranksM}.

We show that the topology of the rotation set classifies homeomorphisms of the torus which act hyperbolically on $\cd(T)$:

\begin{theorem}[Hyperbolic characterisation, special case] \label{thm:hyperbolic-characterisation} 
Let $f \in \Homeo_0(T)$.  The following are equivalent
\begin{enumerate}
\item $f$ acts hyperbolically on $\cd(T)$,
\item $\rot(f)$ has non-empty interior, and
\item there is a finite, $f$-invariant set $P \subset T$ such that the restriction of $f$ to $T - P$ represents a pseudo-Anosov mapping class.
\end{enumerate}  
\end{theorem}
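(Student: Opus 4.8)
The plan is to prove the cycle of implications $(1)\Rightarrow(2)\Rightarrow(3)\Rightarrow(1)$. The implication $(1)\Rightarrow(2)$ will be immediate once we have our lower bound for the area of $\rot(f)$ in terms of the asymptotic translation length $\tau_{\cd(T)}(f)$, which we establish separately: if $f$ acts hyperbolically then $\tau_{\cd(T)}(f)>0$, so $\rot(f)$ has positive area and in particular non-empty interior. If one prefers to keep this direction self-contained one argues its contrapositive directly, which is where the main difficulty lies and which I discuss at the end. The remaining two implications, by contrast, combine classical rotation-set theory with a soft comparison of curve graphs.

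For $(2)\Rightarrow(3)$, assume $\rot(f)$ has non-empty interior. Franks' realisation theorem together with the work of Llibre--MacKay provides a finite $f$-invariant set $Q$ such that the mapping class of $f$ on $T-Q$ has a pseudo-Anosov component, carried by a subsurface $R$. I would first show $R$ is homologically full, i.e.\ $H_1(R)\to H_1(T)$ is onto: if not, then since any two disjoint essential curves on $T$ are isotopic, every piece of the Nielsen--Thurston decomposition would be homologically non-trivial only in a single fixed direction of $H_1(T)$, and the recurrent dynamics it carries would diffuse in that one direction, making $\rot(f)$ degenerate. Granting that $R$ is homologically full, any essential curve disjoint from $R$ must be null-homotopic on $T$ (otherwise it would be forced to cross a curve inside $R$), so each complementary piece of $R$ lies in an embedded disk $D_j\subset T$ with $\partial D_j$ a reducing curve. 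Enlarging $Q$ to an $f$-invariant set $P$ by adjoining one periodic orbit meeting each $D_j$, every $\partial D_j$ becomes peripheral in $T-P$ and hence drops out of any reducing multicurve, while $R$ carries no reducing curve; thus $f|_{T-P}$ has no essential reducing multicurve and positive entropy, i.e.\ it represents a pseudo-Anosov class. (One must check the extra punctures can be taken $f$-invariant and that capping a pseudo-Anosov piece along once-punctured disks preserves pseudo-Anosovness; this is routine.)

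For $(3)\Rightarrow(1)$, let $P$ be as in $(3)$ (necessarily $|P|\ge 2$, since a single fixed puncture would make $f|_{T-P}$ a point-push, hence not pseudo-Anosov), and write $\phi$ for the pseudo-Anosov class of $f$ on $T-P$. Choose an essential simple closed curve $\alpha$ on $T$ disjoint from $P$; being homologically non-trivial on $T$, it is essential and non-peripheral on $T-P$, so $\alpha$ and every $f^n(\alpha)$ are vertices of the classical curve graph $\C(T-P)$ with $[f^n(\alpha)]=\phi^n[\alpha]$. The key point is a projection estimate: given any path $\alpha=\gamma_0,\dots,\gamma_m=f^n(\alpha)$ in $\cd(T)$, a sufficiently small ambient isotopy of $T$ makes all $\gamma_i$ disjoint from the finite set $P$ (displacing the at most one intersection point of consecutive curves off $P$ when necessary, and preserving essentiality), producing a path of the same length whose vertices lie in $T-P$; taking isotopy classes gives a path in $\C(T-P)$ from $[\alpha]$ to $\phi^n[\alpha]$ of length at most $2m$. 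By Masur--Minsky, $d_{\C(T-P)}([\alpha],\phi^n[\alpha])\ge n\,\tau_{\C(T-P)}(\phi)$ with $\tau_{\C(T-P)}(\phi)>0$, so $d_{\cd(T)}(\alpha,f^n(\alpha))\ge \tfrac12 n\,\tau_{\C(T-P)}(\phi)$ and hence $\tau_{\cd(T)}(f)>0$. Since $\cd(T)$ is hyperbolic by \cite{BHW}, an isometry of positive asymptotic translation length is hyperbolic, which is $(1)$.

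The main obstacle is proving $(1)\Rightarrow(2)$ without the area estimate. Arguing the contrapositive, suppose $\rot(f)$ is a point or a segment; then it lies on a line on which some primitive integer covector $u$ is constant. Passing to a power of $f$ and an appropriate lift $\tilde f$ (harmless for the $\cd(T)$-action, and in the rational-slope case arranging $\rot(\tilde f)\subset\{u=0\}$), bounded-deviation results for torus homeomorphisms with degenerate rotation set (Dávalos; Koropecki--Tal) bound the deviation of $\tilde f$ in the $u$-direction by a constant $M$, or, for an irrational-slope segment, by a sublinear function of the iterate. Then the straight curve $c$ dual to $u$ has all of its images $f^n(c)$ lifting into a strip of $u$-width at most $2M$ about the lift of $c$. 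What remains — and is where I expect the bulk of the work — is a geometric lemma inside $\cd(T)$: two curves isotopic to $c$ whose realisations lie in a common strip of $u$-width $w$ are at distance in $\cd(T)$ bounded in terms of $w$ alone (intuitively, one can slide such a curve across the strip towards $c$ in a number of controlled steps linear in $w$). Granting this, $d_{\cd(T)}(c,f^n(c))$ is bounded, respectively sublinear, in $n$, so $\tau_{\cd(T)}(f)=0$ and $f$ is not hyperbolic, closing the cycle.
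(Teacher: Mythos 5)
Your overall strategy (the cycle $(1)\Rightarrow(2)\Rightarrow(3)\Rightarrow(1)$) matches the paper's, and the middle two arrows are sound in spirit, but there are a few misjudgements about what is already known and one genuine gap.

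For $(2)\Rightarrow(3)$ you are doing much more work than necessary. You attribute to Llibre--MacKay only the existence of a pseudo-Anosov \emph{component}, and then try to upgrade this by a homological fullness argument, capping off disks, and enlarging $P$. In fact Llibre--MacKay's theorem already produces a finite invariant set $P$ on which $f|_{T-P}$ is pseudo-Anosov \emph{on all of $T-P$}; the paper records this as Theorem~\ref{thm:LM} and cites it directly. Your attempted reconstruction --- in particular the claim that if the pA piece is not homologically full then ``the recurrent dynamics it carries would diffuse in that one direction, making $\rho(f)$ degenerate'' --- is precisely the delicate part of Llibre--MacKay's own argument (they use periodic orbits with three non-collinear rotation vectors), and as stated it is not a proof.

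For $(3)\Rightarrow(1)$ your projection estimate is essentially a re-derivation of \cite[Lemma~4.2]{BHW}, which the paper cites as Theorem~\ref{thm:TL}. Your sketch is morally correct, though the ``factor of $2$'' step needs more care than you give it: two curves on $T$ intersecting once have complement a single disk containing all of $P$, and one must argue (using $|P|\ge 2$) that an essential curve survives inside the punctured disk. This is exactly what the cited lemma handles uniformly.

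The genuine gap is in $(1)\Rightarrow(2)$. You open with ``it lies on a line on which some primitive integer covector $u$ is constant.'' This is false when $\rho(f)$ is a segment of \emph{irrational} slope: no nonzero integer covector is constant on such a line, so there is no direction $u$ in which to measure deviations, and the subsequent strip argument has no starting point. Your fallback (``or, for an irrational-slope segment, by a sublinear function of the iterate'') does not say what is sublinear relative to what, and the bounded/sublinear-deviation literature you invoke does not cover all degenerate rotation sets without extra hypotheses; indeed the irrational-slope case is precisely the hard one, and the paper handles it by a completely separate and much longer argument (Theorem~\ref{irrationalimpliesparabolic}). The paper's own proof of $(1)\Rightarrow(2)$ is considerably more direct: the crossing-number estimate (Lemma~\ref{lem:crossing-estimate}, giving Lemma~\ref{cor:bound}) shows that the $y$-projection of $\rho(f)$ has diameter at least $|f|>0$, and then conjugacy invariance of $|f|$ under $\SL_2(\ZZ)$ together with the fact that any line segment (of rational \emph{or} irrational slope) can be sheared by integer matrices so as to have arbitrarily small $y$-projection gives the contradiction. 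Your ``geometric lemma inside $\cd(T)$'' about curves in a strip is, modulo bookkeeping, exactly Lemma~\ref{lem:crossing-estimate}; you correctly flag it as the crux but do not prove it, and you also need the $\SL_2(\ZZ)$-shearing idea to kill the irrational-slope case, which is missing from your outline.
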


We also show that the area of the rotation set is bounded from below in terms of the square of the asymptotic translation length, see Proposition~\ref{prop:areabound}.

The above theorem highlights a new connection between \textit{geometry} of $\cd(S)$, \textit{dynamics} via the rotation set, and \textit{topology} via the mapping class represented in the complement of an invariant finite set of points. We note that the implication from (2) to (3) above was already known: it is a consequence of the work of Franks \cite{Franks} and Llibre--MacKay \cite{LM}.  The implication from (3) to (2) is more subtle because the rotation vectors of $P$ may be equal. Nonetheless Boyland \cite{Boyland} shows that the Thurston representative on $T-P$ has rotation set with non-empty interior, and so (2) can be deduced from Nielsen fixed point theory and the convexity of the rotation set. In this paper we prove (1) implies (2) and (3) implies (1) using different methods.

Following \cite{Doeff}, one may also define a rotation set for homeomorphisms of $T$ that admit a power isotopic to a (power of a) Dehn twist map.  This set measures the speed of orbits transversally to the twist; see Definition~\ref{def:rot_general}.  
Using this, we extend Theorem \ref{thm:hyperbolic-characterisation} to give an analogous characterisation applicable to all homeomorphisms of $T$.  The precise statement is given in Theorem \ref{thm:hyperbolic-characterisation-general} below.

One might hope that elliptic and parabolic isometries of $\cd(T)$ could similarly be distinguished by their rotation sets.  This is half-true:  we show that one can give sufficient conditions separately for parabolicity and ellipticity, but also show via explicit examples that one cannot hope to give both sufficient and necessary conditions as in Theorem \ref{thm:hyperbolic-characterisation}.   

	\begin{theorem} \label{irrationalimpliesparabolic}
	Let $f\in \Homeo_0(T)$. If $\rho(f)$ is a segment of irrational slope, then $f$ acts on $\cd(T)$ parabolically.   If $\rho(f)$ is a segment of rational slope containing rational points, then $f$ acts elliptically. 
	\end{theorem}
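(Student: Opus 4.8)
The plan is to handle the two cases separately, in each case using the rotation set to produce a curve (or family of curves) on which $f$ acts with controlled behavior on $\cd(T)$.

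First consider the rational-slope case. If $\rho(f)$ is a segment of rational slope containing a rational point, then after conjugating by an element of $\SL_2(\ZZ)$ we may assume the slope is horizontal, so $\rho(f)$ lies on a horizontal line $\{y = p/q\}$ and contains a rational point $(a/b, p/q)$. By the classical realization results for rational points in the rotation set (Franks), the power $f^{q}$ (suitably composed with a translation of $\RR^2$) has a fixed point, and more is true: having rational slope with a rational point means $f$ preserves, up to isotopy rel a finite periodic set, the foliation by horizontal circles in a coarse sense. The clean way to extract ellipticity is to show that the orbit $\{f^n(\alpha)\}$ of a horizontal essential curve $\alpha$ stays at bounded distance in $\cd(T)$: the vertical displacement of points under $f^n$ is $n\cdot p/q + O(1)$ uniformly (the rotation set being a subsegment of one horizontal line forces the deviations in the $y$-direction to be bounded), so $f^{nq}$ maps $\alpha$ to a curve that is a bounded perturbation of $\alpha$ translated by an integer vector, hence within bounded $\cd(T)$-distance of $\alpha$. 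Then Theorem~\ref{thm:continuity} (or directly the boundedness of orbits) gives that $f$ has zero asymptotic translation length, and since the orbit of $\alpha$ has bounded diameter, $f$ is elliptic.

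For the irrational-slope case we must rule out both ellipticity and hyperbolicity, and then confirm that the translation length is zero. Hyperbolicity is ruled out immediately by Theorem~\ref{thm:hyperbolic-characterisation}: an irrational-slope segment has empty interior, so $f$ is not hyperbolic. To rule out ellipticity one shows $f$ fixes no vertex of $\cd(T)$ and has unbounded orbits: if $f$ fixed an essential curve $\alpha$ then its homology class would be $f$-invariant and the rotation set would be forced to lie on a line through the origin with the slope determined by $[\alpha]$, which is rational, contradicting irrationality of the slope (one must be slightly careful: $f$ could fix $\alpha$ reversing orientation, or fix a curve not in reduced homology, but the rotation-set constraint from an invariant isotopy class of curves always forces a rational direction). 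More robustly, boundedness of the orbit of any curve would, by the same argument applied to a quasi-invariant "coarse direction", force the rotation set into a rational subspace. Hence the orbit of every curve is unbounded. Finally, to see the asymptotic translation length is zero — the crux that makes $f$ parabolic rather than hyperbolic — one uses that for an irrational-slope segment, $\rho(f^n)$ stays confined to a neighborhood of a line of irrational slope whose width grows sublinearly, so one can find curves $\alpha$ with $d_{\cd(T)}(\alpha, f^n\alpha)$ growing sublinearly in $n$; concretely, take $\alpha$ to be a long curve closely shadowing the irrational direction, built from the Katok-type (or more general) dynamics, so that $f^n \alpha$ stays uniformly close to $\alpha$ in the fine curve graph even as both wind around many times.

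The main obstacle is the last point: producing curves witnessing sublinear (in fact bounded, after passing to the right family) translation distance in the irrational case, which is exactly where the absence of a rational invariant direction fights against finding an honest invariant curve. I expect the argument to go through a quantitative version of the boundedness estimate — showing that although no single curve is fixed, there is a sequence $\alpha_n$ of curves with $d_{\cd(T)}(\alpha_n, f^n \alpha_n) = o(n)$ coming from approximating the irrational slope by rationals $p_k/q_k$ and using that $f^{q_k}$ is $C^0$-close to a map fixing the $p_k/q_k$-sloped curve — and then invoking continuity of translation length (Theorem~\ref{thm:continuity}) together with the rational-slope case to pass to the limit and conclude the translation length vanishes. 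Combined with unboundedness of orbits, this yields parabolicity.
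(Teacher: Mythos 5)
Your high-level skeleton is correct (in the irrational case, rule out hyperbolicity via Theorem~\ref{thm:hyperbolic-characterisation} and then rule out ellipticity; in the rational case, bound orbits directly), but both branches have genuine gaps where the paper invokes nontrivial results you replace with unjustified assertions.

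In the rational-slope case, the step ``the rotation set being a subsegment of one horizontal line forces the deviations in the $y$-direction to be bounded'' is false as stated: the rotation set only controls asymptotic averages, so membership of $\rho(f)$ in a horizontal line gives sublinear $y$-deviations, not uniformly bounded ones. Uniform boundedness of displacement transverse to a rational direction is precisely the content of D\'avalos's annularity theorem, which the paper cites and which carries the whole weight of this case. Without it the crossing numbers $C_\alpha(f^n\alpha)$ are not manifestly bounded, and ellipticity does not follow. Your parenthetical appeal to Theorem~\ref{thm:continuity} is also misplaced: continuity of translation length gives $|f|=0$ but not bounded orbits, and the latter is what ellipticity requires.

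In the irrational-slope case, ruling out ellipticity is the crux and your argument there does not work. Since $f\in\Homeo_0(T)$, it acts trivially on homology, so ``the homology class of a fixed curve is $f$-invariant'' is vacuous; moreover ellipticity does not require a fixed curve, only a bounded orbit, so arguing about an invariant curve does not address the general case. The phrase ``a quasi-invariant coarse direction forcing the rotation set into a rational subspace'' gestures at the right phenomenon but is not a proof. The paper's actual argument is substantially more involved: assuming boundedness $d^\dagger(\alpha,f^n\alpha)\leq K$, it passes to the rescaled characteristic covers $T_n$, uses the Misiurewicz--Ziemian convergence $\tfrac1n\tilde f^n(D)\to\rho(f)$ (Lemma~\ref{lem:MZ}) to produce straight segments converging to a scaled copy of $\rho(f)$, then lifts everything to a fixed branched translation surface $X(K)$ via Lemma~\ref{lem:goodfinitecover}, and finally invokes Lemma~\ref{lem:dense} (any sufficiently long line of irrational slope on a square-tiled surface hits every horizontal curve) to get a contradiction. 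None of these ingredients appear in your proposal. Finally, your last paragraph on showing ``asymptotic translation length is zero'' is redundant: once hyperbolicity is excluded by Theorem~\ref{thm:hyperbolic-characterisation}, the translation length is already zero, and the only remaining task is to exclude ellipticity.
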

		
Note that there are many elliptic elements whose rotation set is a singleton e.g. any homeomorphism whose support is contained in a disk has null rotation set.  Thus, the condition on elliptics in Theorem \ref{irrationalimpliesparabolic} is not necessary.   
 For parabolics, in Section~\ref{sec:parabolic} we also show

		\begin{proposition}\label{prop:parabolic-with-point-rot2}
          There are homeomorphisms $f$ that act parabolically on $\cd(T)$ with 
           $\rho(f) = \{(0,0)\}$. 
	\end{proposition}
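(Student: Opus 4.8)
The plan is to reduce, via Theorem~\ref{thm:hyperbolic-characterisation}, to the construction of one well-chosen homeomorphism. Since a singleton $\rho(f)=\{(0,0)\}$ has empty interior, Theorem~\ref{thm:hyperbolic-characterisation} already shows that any such $f\in\Homeo_0(T)$ is not hyperbolic, i.e.\ has vanishing asymptotic translation length on $\cd(T)$. Recalling that a single isometry of a metric space has one bounded orbit if and only if all its orbits are bounded, it therefore suffices to produce $f\in\Homeo_0(T)$ with $\rho(f)=\{(0,0)\}$ and with an unbounded orbit on $\cd(T)$; such an $f$ is neither hyperbolic nor elliptic, hence parabolic. (One should first check whether the homeomorphism built in the proof of Theorem~\ref{thm:parabolicsexist} for $S=T$ is already supported in a subsurface whose preimage in $\RR^2$ has components of uniformly bounded diameter — if so, $\rho(f)=\{(0,0)\}$ is automatic and nothing more is needed.)

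To build $f$ directly I would use an invariant minimal set with no closed leaves, so as to avoid the obvious elliptic behaviour. Fix a Denjoy-type exceptional lamination $\Lambda\subset T$ of irrational slope $\beta$, obtained by blowing up one leaf of the linear foliation of slope $\beta$; then $\Lambda$ is minimal and fills $T$ (it meets every essential curve), and $D:=T\setminus\Lambda$ is a single open disk whose embedding in $T$ accumulates onto $\Lambda$. Let $f$ fix $\Lambda$ pointwise and restrict on $D$ to a fixed-point-free (Brouwer) homeomorphism all of whose orbits tend to infinity in $D$ — equivalently, accumulate onto $\Lambda$ — but at a \emph{parabolic rate}: e.g.\ $f|_D$ is the time-one map of a flow on $D$ whose orbits approach $\Lambda$ like $1/n$, so that the displacements $\lVert\tilde f^{\,n}(x)-x\rVert$ of the preferred lift grow like $\log n$, in any case sublinearly in $n$. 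Since the preimage of $D$ in $\RR^2$ is a disjoint union of strips parallel to the direction $\beta$, each orbit of the preferred lift stays in one such strip, and sublinearity gives $\tfrac1n(\tilde f^{\,n}(x)-x)\to(0,0)$ uniformly; hence $\rho(f)=\{(0,0)\}$. Taking $\beta$ to be an irrational that is not a quadratic irrationality and $f$ to be co-orientation preserving forces $f_*=\id$ on $H_1(T)$, so $f\in\Homeo_0(T)$.

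It remains to show that $f$ is not elliptic, and this is the technical heart of the matter. Fix an essential curve $\alpha$; it meets $D$, and iterating $f$ drags the arcs $\alpha\cap D$ out to infinity in $D$, so $f^{\,n}(\alpha)$ spirals along $\Lambda$ more and more (again at the sublinear rate): its geometric intersection with a fixed arc transverse to $\Lambda$, and its ``twisting around $\Lambda$'', grow without bound. The obstacle is that growing intersection number alone does not force growing distance in $\cd(T)$; one needs a coarse-geometric invariant detecting this complexity. I would set up a twisting coordinate $\tau_\Lambda$ for curves relative to the invariant lamination $\Lambda$ — an analogue of the annular subsurface projection, in the spirit of the projection tools of \cite{BHW} — and prove the key Behrstock-type estimate that adjacent curves in $\cd(T)$ have $\tau_\Lambda$-values differing by a bounded amount. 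Summing this estimate along a geodesic in $\cd(T)$ from $\alpha$ to $f^{\,n}(\alpha)$ then bounds $d_{\cd(T)}(\alpha,f^{\,n}(\alpha))$ below by a constant multiple of the relative twisting, which tends to infinity. Hence the orbit of $\alpha$ is unbounded and $f$ is parabolic, while $\rho(f)=\{(0,0)\}$ holds by construction.
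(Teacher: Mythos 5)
Your overall strategy is sound: the reduction to ``not hyperbolic and not elliptic'' via Theorem~\ref{thm:hyperbolic-characterisation} is exactly what the paper does, and your construction is essentially the same as the paper's Construction~\ref{const:parabolic} (a Denjoy-type minimal lamination of irrational slope, with $f$ fixing the lamination and drifting points in the complementary strip at a sublinear rate). The sublinearity argument for $\rho(f)=\{(0,0)\}$ is also correct. As a small aside, your remark about ``non-quadratic irrational'' is irrelevant: $f$ is a time-one map of a flow, hence automatically in $\Homeo_0(T)$; and your parenthetical about the construction for Theorem~\ref{thm:parabolicsexist} ``for $S=T$'' is circular, since the paper only builds those examples for genus $\geq 2$, and the torus case of that theorem is supplied precisely by this proposition.

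The genuine gap is the non-ellipticity argument, which you correctly identify as ``the technical heart of the matter'' and then leave as a sketch resting on undeveloped machinery. You propose a ``twisting coordinate $\tau_\Lambda$ relative to the lamination $\Lambda$'' with a Behrstock/Lipschitz property for adjacent curves in $\cd(T)$. No such tool appears in \cite{BHW} or elsewhere: subsurface/annular projections are defined relative to compact annuli (or closed curves), where one passes to the annular cover; there is no analogous covering-space coordinate for an irrational minimal lamination, and it is not at all clear that two essential simple closed curves meeting at most once must have boundedly different ``twisting'' about $\Lambda$ (the usual arguments use the compact core of the annular cover, which has no analogue here). So the claimed key estimate is unproved and not obviously provable. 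The paper circumvents this entirely: using Lemma~\ref{lem:small-diameter-one-cover} and Lemma~\ref{lem:goodfinitecover}, a bounded orbit in $\cd(T)$ would force disjoint elevations of $\gamma$ and $f^n(\gamma)$ to a single fixed translation surface $X$; since $f^n(\gamma)$ develops arbitrarily long subsegments whose slopes converge to the irrational slope $\alpha$, Lemma~\ref{lem:dense} (long irrational-slope segments on $X$ meet every horizontal curve) yields a contradiction. (An alternative proof in the paper uses bicorns and the projection $\cd(T)\to\C(T)$.) To complete your proof you would either have to carry out the branched-cover/translation-surface argument or develop from scratch the lamination-twisting tool you invoke.
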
 
This implies, in particular, that one cannot hope to distinguish elliptics from parabolics via rotation sets alone.

\subsection{Further questions}
An interesting related question is to study actions on analogous graphs for infinite type surfaces, as a kind of intermediate point between $\cd(S)$ and $\C(S)$.   Work of Bavard \cite{Bavard} and of Horbez--Qing--Rafi \cite{HQR} shows that many of these groups admit dynamically interesting actions on hyperbolic metric spaces.  However, there is no known analog of the Nielsen--Thurston classification, and whether these actions have parabolic elements appears to be open. 

It is natural to try to extend the present work on rotation sets for tori to the case of higher genus surfaces. However, existing analogs of the rotation set on higher genus surfaces  do not lend themselves as easily to such an analysis as we do here.

\subsection{Outline}	
The paper is organised as follows. In Section~\ref{sec:background} we briefly provide the necessary background for our theorems and proofs. In Section~\ref{sec:continuous} we show that the asymptotic translation length of a homeomorphism on $\cd(S)$ is $C^0$-continuous, and therefore $C^r$-continuous for $1\leq r \leq \infty$. In Section~\ref{sec:estimates} we provide useful bounds on distances in $\cd(S)$, which enable us to give statements about rotation sets given actions on curves, or vice versa. These are of independent interest, but also used in our later proofs.  
 In Section~\ref{sec:hyperbolic} we provide the proofs of Theorems~\ref{thm:hyperbolic-characterisation} and \ref{thm:hyperbolic-characterisation-general}. In Section~\ref{sec:parabolic} we provide the proofs of Theorem~\ref{thm:parabolicsexist} and Proposition~\ref{prop:parabolic-with-point-rot2}.

\subsection*{Acknowledgments} Bowden and Hensel greatly acknowledge the support of the Special Priority Programme SPP 2026 Geometry at Infinity funded by the DFG.  Mann was partially supported by NSF CAREER grant DMS 1844516 and a Sloan Fellowship. Militon was supported by the ANR project Gromeov ANR-19-CE40-0007. Webb was supported by an EPSRC Fellowship   EP/N019644/2. 

	\section{Background} \label{sec:background} 
        In this section, we set up notation and recall basic facts from coarse geometry and topological dynamics to be used later in this work.  

		\subsection{Hyperbolic spaces and their isometries}
		Let $X$ be a geodesic metric space. 
		\begin{notation}
			For points $x,y$ in  
			$X$, we
			denote by $[x,y]$ any geodesic between them. We make
			the convention that statements about $[x,y]$ are
			supposed to hold for any choice of geodesic.
		\end{notation}
		We say that $X$ is $\delta$--hyperbolic if geodesic triangles are \emph{$\delta$--slim} i.e.
		if for all points $x,y,z \in Z$ we have
		\[ [x,y] \subset N_\delta([y,z] \cup [z,x]). \]
		Here, $N_\delta$ denotes the closed $\delta$-neighbourhood. We say that $X$ is \textit{Gromov hyperbolic} (or just \textit{hyperbolic})
		if $X$ is $\delta$-hyperbolic for some $\delta\geq 0$.
		For details, background, and basic properties of Gromov hyperbolic spaces, we refer the reader to \cite[Chapter III.H]{BH}.
		It is a straightforward consequence of the definition that the
                Hausdorff distance between any geodesics joining the same two
                points is bounded by $\delta$. 
	
		\bigskip For an isometry $g$ of a Gromov hyperbolic
                space $X$, the \emph{asymptotic
                  translation length} is defined as
		\[ |g|_X \coloneq \lim \limits_{n \to \infty} \tfrac{1}{n}d_X(x, g^n(x)) \]
		It is a standard exercise to see that this limit exists and is independent of $x$. 
		This independence immediately implies
                that the asymptotic translation length is a conjugacy
                invariant of isometries of $X$.
		
		\smallskip We have the following classification of isometries \cite[§ 8]{GromovHG}: 
		\begin{definition}
			Let $g$ be an isometry of a Gromov hyperbolic space. We say that $g$ is
			\begin{description}
				\item[Hyperbolic] if the asymptotic translation length is positive, 
				\item[Parabolic] if the asymptotic translation length is zero  
				but $g$ has no finite diameter orbits, and 
				\item[Elliptic] if $g$ has finite diameter orbits.
			\end{description}
		\end{definition} 
		Note again that these categories are invariant under conjugation by isometries.   The reader may recall that there is an equivalent reformulation of this trichotomy in terms of fixed points on the Gromov boundary of $X$, but we do not require this point of view in the present work.
		
\subsection{Surfaces and Curves}
Throughout this article, \emph{surface} will mean a closed, oriented surface $S$ of genus at 
least $1$, with $T$ being used to denote the torus. 
A \emph{curve} on such a surface will always be required to be an essential closed loop 
(i.e. non-trivial in $\pi_1(S)$), frequently we restrict our attention to simple (i.e. topologically
embedded) curves.  

\smallskip The main geometric tool we use in this article is the following variant of
the classical curve graph, which is the 1-skeleton of the curve complex, see \cite[Section~2.2]{MM1} for a definition.

\begin{definition}  
	For a surface $S$ of genus at least
  two, the {\em \daggername}, denoted by $\cd(S)$, is the graph with
  vertex set equal to the set of essential simple closed curves in
  $S$, and edges given by disjointness. 
  
  In the case of the torus
  $T$, vertices of $\cd(T)$ are again essential simple closed curves, but two
  vertices are now joined by an edge when the corresponding curves are either disjoint or 
  intersect topologically transversely at most once.
\end{definition} 

We endow $\cd(S)$ with a metric in which each edge is isometric to the unit interval, and the distance between two vertices is given by the minimal length of a path between them. The distance between two vertices $\alpha$ and $\beta$ of $\cd(S)$ will be denoted by $d^\dagger(\alpha,\beta)$. 

We need two facts about $\cd(S)$ which are true for any surface $S$ of genus at least $1$
(compare also \cite[Section~5.2]{BHW} for comments on the torus case): 
\begin{theorem}[{\cite[Theorem~3.8]{BHW}}]\label{thm:hyperbolicity}
  The graph $\cd(S)$ is Gromov hyperbolic.
\end{theorem}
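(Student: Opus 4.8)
The plan is to verify Gromov hyperbolicity by means of Bowditch's \emph{guessing geodesics} criterion: it suffices to produce a constant $m\ge 0$ and, for every ordered pair of vertices $\alpha,\beta$ of $\cd(S)$, a connected subgraph $\mathcal P(\alpha,\beta)$ containing $\alpha$ and $\beta$, such that (i) $\operatorname{diam}\mathcal P(\alpha,\beta)\le m$ whenever $d^{\dagger}(\alpha,\beta)\le 1$, and (ii) $\mathcal P(\alpha,\beta)\subset N_m\!\bigl(\mathcal P(\alpha,\gamma)\cup\mathcal P(\gamma,\beta)\bigr)$ for every triple of vertices $\alpha,\beta,\gamma$; one then concludes that $\cd(S)$ is $\delta$--hyperbolic with $\delta=\delta(m)$. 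I would \emph{not} try to deduce the theorem from the coarsely Lipschitz map $\cd(S)\to\C(S)$ that forgets isotopy class --- this map is very far from a quasi-isometry and has unbounded fibres --- so the entire content lies in constructing the paths $\mathcal P(\alpha,\beta)$ and verifying (i) and (ii).

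For the construction I would use surgered ``unicorn'' curves, adapting the arc- and curve-graph technique of Hensel--Przytycki--Webb. The key structural difference from the classical $\C(S)$ is that $\alpha$ and $\beta$ are genuine embedded curves on $S$: after replacing each by a disjoint pushoff (at $d^{\dagger}$--distance $1$) and removing bigons one at a time, we may assume $\alpha$ and $\beta$ are transverse and in minimal position. Fix a point $p\in\alpha\cap\beta$; for each subarc $b\subset\beta$ with one endpoint at $p$ and the other at a point $q\in\alpha\cap\beta$, let $a\subset\alpha$ be the subarc from $q$ to $p$ for which $c_{a,b}\coloneq a\cup b$ is an embedded essential simple closed curve, and let $\mathcal P(\alpha,\beta)$ consist of $\alpha$, $\beta$, and all such $c_{a,b}$. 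These unicorn curves are linearly ordered by the length of $b$ along $\beta$, consecutive ones differ by a single surgery and hence are disjoint or meet once, so $\mathcal P(\alpha,\beta)$ is a path in $\cd(S)$. One must check that enough essential unicorn curves exist --- this is where essentiality of $\alpha,\beta$ and $g\ge 1$ enter --- and, in the torus case, where $i(\alpha,\beta)=1$ already forces an edge and the relaxed ``meet at most once'' adjacency must be tracked throughout, a short separate argument is needed.

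Condition (i) is then immediate: if $d^{\dagger}(\alpha,\beta)\le 1$ then $i(\alpha,\beta)\le 1$ and there are no nontrivial unicorns, so $\mathcal P(\alpha,\beta)$ has diameter at most $1$. The substance of the proof --- and the step I expect to be the main obstacle --- is the slimness condition (ii). Here I would fix a third curve $\gamma$ in minimal position with both $\alpha$ and $\beta$ and, for each unicorn curve $c_{a,b}$ of $(\alpha,\beta)$, resolve its intersections with $\gamma$ by surgery along subarcs of $\gamma$, arguing that the resulting curve is disjoint from, or meets once, a unicorn curve of $(\alpha,\gamma)$ or of $(\gamma,\beta)$, while every intermediate curve remains essential and embedded, i.e. stays in $\cd(S)$. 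The genuinely new difficulty, with no counterpart in $\C(S)$, is that geometric intersection number is \emph{not} coarsely controlled on $\cd(S)$ --- curves meeting very often can be $\cd$--close --- so the classical $\C(S)$--estimates cannot simply be imported; one must instead run the surgeries combinatorially on the cut-open surface, keep track of the bounded number of pieces involved, and use the change-of-coordinates principle to preserve essentiality. Once (i) and (ii) hold with constants independent of $\alpha,\beta,\gamma$, and --- with more bookkeeping --- independent of $S$, the guessing geodesics criterion yields Gromov (in fact uniform) hyperbolicity of $\cd(S)$.
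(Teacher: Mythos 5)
The paper does not prove this theorem itself; it cites it as Theorem~3.8 of \cite{BHW}, so the natural comparison is with that proof. Your high-level framework --- Bowditch's guessing-geodesics criterion, with paths built from surgered curves --- matches BHW, and you are right that the coarsely Lipschitz map $\cd(S)\to\C(S)$ is useless here. But the heart of the BHW argument is a reduction that your proposal bypasses, and this is where the gap lies. BHW first passes to the smooth version of $\cd(S)$ (which is quasi-isometric, as the present paper notes in its Remark), and then prove what is Lemma~3.4 there: if $\alpha,\beta$ are curves in minimal position and $P$ is a finite set with a point in each complementary region of $\alpha\cup\beta$, then $d^\dagger(\alpha,\beta)$ coincides with the distance between the isotopy classes $[\alpha],[\beta]$ in the surviving curve graph $\mathcal{C}^s(S-P)$. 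With that lemma in hand, the paths $\mathcal P(\alpha,\beta)$ can be taken to be (realizations of) geodesics in $\mathcal{C}^s(S-P)$, and slimness is inherited from the uniform hyperbolicity of the graphs $\mathcal{C}^s(S-P)$ over all finite $P$ --- which is where unicorn/bicorn surgery actually lives. Your proposal tries instead to run the unicorn construction directly on actual curves in $S$.

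The concrete obstruction is the step ``fix a third curve $\gamma$ in minimal position with both $\alpha$ and $\beta$.'' In the guessing-geodesics criterion $\gamma$ is an \emph{arbitrary} vertex, and a genuine curve on $S$ cannot be replaced by an isotopic one --- that changes the vertex. For isotopy classes one can always choose pairwise-minimal-position representatives (e.g.\ geodesics), which is exactly why the classical unicorn argument works in $\C(S)$ and in $\mathcal{C}^s(S-P)$; for actual curves this is simply unavailable, and the intersection pattern of $\gamma$ with your unicorn arcs is uncontrolled, so the slimness argument cannot be launched. There is also a preliminary issue you wave at but do not resolve: two topological (even two smooth) curves need not meet in finitely many points, so ``remove bigons one at a time'' is not well defined without first invoking the quasi-isometry to the smooth-and-transverse model. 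Both problems are precisely what the BHW reduction to $\mathcal{C}^s(S-P)$ is designed to solve: putting marked points in every complementary region rigidifies the picture enough that isotopy in $S-P$ becomes harmless (it does not change $d^\dagger$ by Lemma~3.4), and one can then freely choose minimal-position representatives. You correctly identified the difficulty --- that intersection number is not coarsely controlled in $\cd(S)$ --- but a direct combinatorial surgery ``on the cut-open surface'' does not supply the missing tool; the marked-point/distance-comparison lemma does.
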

\begin{theorem}[{\cite[Lemma~4.2]{BHW}}]\label{thm:TL}
  Suppose that $F\colon S\to S$ is a homeomorphism fixing a finite set $P \subset S$,
  and let $\varphi$ be the mapping class of $S-P$ defined by $F$. Then,
  \[ |F|_{\cd(S)} \geq |\varphi|_{\mathcal{C}(S-P)}. \]

\end{theorem}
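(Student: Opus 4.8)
The plan is to push a geodesic of $\cd(S)$ forward to a path of no greater length in $\mathcal{C}(S-P)$ by perturbing its interior vertices off the finite set $P$ while preserving adjacency. The starting observation is that if $\gamma$ is any essential simple closed curve on $S$ with $\gamma\cap P=\emptyset$, then $\gamma$ is automatically essential and non-peripheral in $S-P$: otherwise $\gamma$ would bound a disk, or a once-punctured disk, in $S-P$, hence an honest disk in $S$, contradicting that $\gamma$ is essential in $S$. So fix a vertex $\alpha$ of $\cd(S)$ disjoint from $P$. Since $F(P)=P$, every $F^n(\alpha)$ is again disjoint from $P$, so it too represents a vertex of $\mathcal{C}(S-P)$, and by definition of $\varphi$ we have $[F^n(\alpha)]=\varphi^n[\alpha]$ there. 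Because $|F|_{\cd(S)}=\lim_n\tfrac1n\, d^\dagger(\alpha,F^n(\alpha))$ and $|\varphi|_{\mathcal{C}(S-P)}=\lim_n\tfrac1n\, d_{\mathcal{C}(S-P)}([\alpha],\varphi^n[\alpha])$, it suffices to show
\[ d_{\mathcal{C}(S-P)}\big([\alpha],\varphi^n[\alpha]\big)\ \leq\ d^\dagger(\alpha,F^n(\alpha))\qquad\text{for every }n, \]
and then divide by $n$ and let $n\to\infty$.

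To prove this inequality, fix $n$, put $m=d^\dagger(\alpha,F^n(\alpha))$, and take a geodesic $\alpha=\gamma_0,\gamma_1,\dots,\gamma_m=F^n(\alpha)$ in $\cd(S)$. The interior vertices $\gamma_1,\dots,\gamma_{m-1}$ may meet $P$, and the key step is to replace each such $\gamma_i$ by a curve $\gamma_i'$ contained in a thin tubular annulus $U_i$ around $\gamma_i$, chosen isotopic to $\gamma_i$ inside $U_i$ (hence still essential in $S$) and disjoint from the finitely many points of $P\cap U_i$. Choosing the $U_i$ thin enough, one can arrange that $U_i\cap U_j=\emptyset$ whenever $\gamma_i$ and $\gamma_j$ are disjoint, so consecutive disjoint curves remain disjoint after perturbation; and for a torus edge realised by curves meeting exactly once, taking $\gamma_i'$ to be a core circle of $U_i$ keeps precisely one transverse intersection with the neighbour (a core circle crosses the incoming arc once). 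Setting also $\gamma_0'=\alpha$ and $\gamma_m'=F^n(\alpha)$, the classes $[\gamma_0'],\dots,[\gamma_m']$ then form a path in $\mathcal{C}(S-P)$ from $[\alpha]$ to $\varphi^n[\alpha]$ of length at most $m$, which is the desired bound. For $S$ of genus $\geq 2$ every edge of $\cd(S)$ is a disjointness edge, so nothing further is needed; for the torus with at most one marked point $\mathcal{C}(S-P)$ is the Farey graph, where curves meeting once are still adjacent; for the torus with two or more marked points a once-intersecting edge can map to a path of length two, and one needs the extra care indicated in \cite[Section~5.2]{BHW}.

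I expect the perturbation step to be the only real obstacle: one must move finitely many curves off $P$ simultaneously without creating new intersections between consecutive vertices of the geodesic, which is a careful but elementary tubular-neighbourhood argument, together with the low-complexity torus bookkeeping just mentioned. Note that the argument is entirely soft — it uses neither hyperbolicity of $\cd(S)$ nor any structure theory of mapping classes, only the definition of asymptotic translation length and the fact that an essential curve on $S$ missing $P$ is an honest vertex of $\mathcal{C}(S-P)$.
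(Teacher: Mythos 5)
The paper does not give its own proof here: the result is imported as \cite[Lemma~4.2]{BHW}, with a remark that the argument carries over from the smooth to the $C^0$ setting and a pointer to \cite[Section~5.2]{BHW} for the torus. Your perturbation argument is the expected soft proof, and it is correct and complete when $S$ has genus at least two, and for the torus when $|P| \leq 1$ (there $\mathcal{C}(T-P)$ is a Farey graph, so a once-intersecting edge of $\cd(T)$ pushes to an honest edge). The supporting points---an essential curve on $S$ missing $P$ is automatically non-peripheral in $S-P$; one may take the basepoint $\alpha$ disjoint from the $F$-invariant set $P$; and one can perturb the interior vertices of a $\cd(S)$-geodesic off the finite set $P$ inside thin annular collars, chosen compatibly so that disjointness, respectively a single transverse intersection, is preserved---are all sound.

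However, the issue you flag at the end for the torus with $|P| \geq 2$ is a genuine gap, not mere bookkeeping. A once-intersecting edge of $\cd(T)$ pushes only to a length-two path in $\mathcal{C}(T-P)$: the complement of two curves on $T$ meeting once is a disk, so any third essential curve disjoint from both must live in that punctured disk and needs at least two punctures to be non-peripheral. Moreover, no nontrivial $\cd(T)$-geodesic can avoid once-intersecting edges, since two disjoint essential curves on a torus are isotopic. The push-forward therefore yields at best $|F|_{\cd(T)} \geq \tfrac{1}{2}\,|\varphi|_{\mathcal{C}(T-P)}$, which does not prove the stated inequality (though it would suffice for every downstream use in this paper, all of which only need positivity of the right-hand side). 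You correctly defer to \cite[Section~5.2]{BHW} for the repair, as the paper itself does, but as written the torus case with $|P| \geq 2$---precisely the case invoked in Theorem~\ref{thm:hyperbolic-characterisation}, implication $(3)\Rightarrow(1)$, where Llibre--MacKay's construction gives $|P|\geq 3$---is not fully established by your argument alone.
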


By the work of Masur--Minsky (see \cite[Proposition~4.6]{MM1}), if $\varphi$ is pseudo-Anosov, then $|\varphi|_{\mathcal{C}(S-P)}>0$ hence $F$ acts hyperbolically on $\cd(S)$.

\begin{remark}[Smooth versus non-smooth]
In \cite{BHW}, the graph $\cd(S)$ has vertices
corresponding to smooth curves, whereas for the applications here we
need to allow all essential curves as vertices.  As
discussed in \cite[Remark~3.2]{BHW}, these two graphs are quasi-isometric, which
yields Theorem~\ref{thm:hyperbolicity}.  The argument
given in \cite{BHW} for Theorem~\ref{thm:TL} is stated in the smooth context but applies equally well in the $C^0$ setting, giving the statement above.
\end{remark} 

\smallskip In addition, we use the following easy lemma:
\begin{lemma}\label{lem:intersection-bound}
  Suppose that $S$ is any surface of genus $g \geq 1$, and
  suppose that $\alpha$ and $\beta$ are two curves on $S$ intersecting in
  a finite number of points. Then
  \[ d^\dagger(\alpha, \beta) \leq 2\#(\alpha \cap \beta) + 2 \]
\end{lemma}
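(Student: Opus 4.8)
The plan is to induct on $n\coloneq\#(\alpha\cap\beta)$, after a preliminary perturbation making all intersections transverse (which does not increase $n$).

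\emph{Base cases.} If $n=0$ then $\alpha,\beta$ are disjoint, hence joined by an edge, so $d^\dagger(\alpha,\beta)\le 1$. If $n=1$ and $S=T$, they are again joined by an edge, by the definition of $\cd(T)$. If $n=1$ and $S$ has genus $\ge 2$, then $\alpha$ and $\beta$ meet transversely once, so a regular neighborhood of $\alpha\cup\beta$ is a one-holed torus whose boundary curve $\delta$ is disjoint from both $\alpha$ and $\beta$ and is essential in $S$ (it does not bound a disk, else $S$ would be a torus); hence $d^\dagger(\alpha,\beta)\le d^\dagger(\alpha,\delta)+d^\dagger(\delta,\beta)\le 2$. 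In all cases $d^\dagger(\alpha,\beta)\le 2\le 2n+2$.

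\emph{Inductive step ($n\ge 2$).} It suffices to produce an essential simple closed curve $\gamma$ with $\#(\gamma\cap\alpha)\le 1$ and $\#(\gamma\cap\beta)\le n-1$: then $d^\dagger(\alpha,\gamma)\le 2$ by the base-case analysis applied to $\alpha$ and $\gamma$, while $d^\dagger(\gamma,\beta)\le 2(n-1)+2$ by the inductive hypothesis, so the triangle inequality gives $d^\dagger(\alpha,\beta)\le 2n+2$. To build $\gamma$, I would use the standard surgery. Since $n\ge 2$ there are distinct points $x,y\in\alpha\cap\beta$ consecutive along $\beta$, meaning one of the two sub-arcs $b\subset\beta$ with $\partial b=\{x,y\}$ has interior disjoint from $\alpha$. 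Let $a_1,a_2$ be the two arcs into which $x,y$ cut $\alpha$, and set $\gamma_i\coloneq a_i\cup b$, smoothed at $x$ and $y$ to an embedded curve. Pushing $a_i$ slightly off $\alpha$ shows $\#(\gamma_i\cap\alpha)\le 1$, and a short case analysis (on the parity of $\#(a_i\cap\beta)$, which governs whether the smoothings near $x$ and $y$ can be taken to avoid $\beta$) shows $\#(\gamma_i\cap\beta)\le n-1$ for a suitable choice of $i$ and of the smoothings.

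\emph{Essentiality.} What remains — and is the point requiring real care — is that at least one of $\gamma_1,\gamma_2$ is essential. I would take a closed regular neighborhood $R$ of the theta-graph $\alpha\cup b$; since $\chi(R)=-1$, $R$ is either a pair of pants or a one-holed torus. If $R$ is a one-holed torus, then $\gamma_1,\gamma_2$ are non-peripheral, and hence non-separating, simple closed curves lying in the interior of $R$; a connectedness argument — if $\gamma_i$ separated $S$ then an annular neighborhood of $\gamma_i$ inside $R$ would meet both sides of $\gamma_i$ in $S$, contradicting that $R\setminus\gamma_i$ is connected — shows each $\gamma_i$ is non-separating, hence essential, in $S$. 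If $R$ is a pair of pants, then $\alpha,\gamma_1,\gamma_2$ are isotopic to its three boundary curves; if $\gamma_1$ and $\gamma_2$ both bounded disks in $S$, capping those disks onto $R$ would exhibit $\alpha$ as bounding a disk, contradicting that $\alpha$ is essential. Either way some $\gamma_i$ is essential, and we take it as $\gamma$, completing the induction. The main obstacle is this essentiality dichotomy together with the bookkeeping that $\#(\gamma_i\cap\beta)$ can be made $\le n-1$; the base cases and the triangle-inequality assembly are immediate.
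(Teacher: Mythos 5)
Your proof is correct and uses essentially the same curve-surgery induction as the paper, with the roles of $\alpha$ and $\beta$ interchanged: you surger $\alpha$ along an arc $b\subset\beta$ with interior missing $\alpha$, whereas the paper surgers $\beta$ along an arc $a\subset\alpha$ with interior missing $\beta$. Your treatment of the essentiality of the surgered curve (via the Euler-characteristic dichotomy for a regular neighborhood of the theta-graph $\alpha\cup b$) spells out in detail what the paper leaves as a one-line assertion.
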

This lemma can be deduced easily from Lemma~3.4 of \cite{BHW} and
the corresponding estimate for usual curve graphs.  We give an alternative proof using 
 \emph{curve surgery}, as a warm-up for later arguments which will use similar tools.    
 
\begin{proof} 
Suppose that $\alpha$ and $\beta$ are two curves which intersect  
in a finite number of points. Let $a\subset \alpha$ be an embedded  
subarc such that $a \cap \beta$ is equal to the endpoints of $a$. Denote by
$b', b'' \subset \beta$ the two connected components of $\beta \setminus a$.  Then
$a \cup b'$ and $a \cup b''$ are simple closed curves, at least one of
which is essential.  Call this essential curve $\beta'$.  

If $a$ approaches $\beta$ from the same side at both endpoints, then $\beta'$ is homotopic to, and disjoint from,
a curve $\beta''$ which is disjoint from $\beta$.
If instead $a$ approaches $\beta$ from opposite sides at both 
endpoints, one can instead find such a curve $\beta''$ that intersects $\beta$ in a single point.  

Thus, this curve $\beta''$ obtained from $\beta$ by surgery along $a$ 	
has distance at most $2$ from $\beta$ in $\cd(S)$.
By construciton, $\beta''$ can be taken to intersect $\alpha$ in strictly
fewer points than $\beta$, which shows Lemma~\ref{lem:intersection-bound}
by induction. 
\end{proof} 

\begin{remark} The bound given in Lemma \ref{lem:intersection-bound} is far from optimal.  In fact, one could also bound the distance $d^\dagger$ by the logarithm of the intersection number (as in the case of usual curve graphs, see \cite{Hempel}). Namely, by choosing the right subarc $a$, one can ensure that both choices for $\beta''$ are essential, so we can halve the number of intersections in each step.
\end{remark}

\subsection{Rotation sets for torus homeomorphisms} \label{sec:rot}
Here and in what follows, we view the torus with its standard Euclidean structure $T = \RR^2/\ZZ^2$.  

\begin{definition} 
Let $\tilde{f} \in \Homeo_0(\RR^2)$ be a lift of an isotopically trivial homeomorphism of $T$. The {\em rotation set} $\rot(\tilde{f}) \subset \RR^2$ is the set of vectors 
\[ 
\rot(\tilde{f})\coloneq \left\{ v \in \RR^2 : \exists x_i \in \RR^2, n_i \to \infty \ \mathrm{s.t.} \ \tfrac{(\tilde{f}^{n_i}(x_i) - x_i)}{n_i} \to v \right\}.
\]
\end{definition}

We recall some basic properties of $\rho$.  The following are easy consequences of the definition:
\begin{enumerate} 
\item For $p \in \ZZ^2$ and $n \in \ZZ$, we have 
\[ \rot(\tilde{f}^n + p) =  n\rot(\tilde{f}) + p \] 
\item For any lift $\tilde{g}$ of an isotopically trivial torus homeomorphism, we have 
\[ \rot(\tilde{g} \tilde{f} \tilde{g}^{-1}) = \rot(\tilde{f}), \] and 
\item For any matrix $A \in SL_2(\mathbb{Z})$, we have
\[ \rot(A \tilde{f} A^{-1}) = A(\rot(\tilde{f})). \]
\end{enumerate}

Another important and much less trivial property is the result of Misiurewicz and Ziemian \cite{MZ} that $\rho(\tilde{f})$ is compact, convex, and equal to the convex hull of the {\em pointwise rotation set} 
\[ 
\rho_p(\tilde{f}):= \left\{ v \in \RR^2 : \exists x \in \RR^2, n_i \to \infty \ \mathrm{s.t.} \ \tfrac{(\tilde{f}^{n_i}(x) - x)}{n_i} \to v \right\}.
\]

Thus, $\rot(\tilde{f})$ is either a point, a closed interval, or has non-empty interior.  These, and many other properties of interest (for example, the property of containing a point with rational coordinates) are invariant under integer translations and therefore independent of the lift of $f$ chosen.  When considering such questions, we will often abuse notation and simply write $\rot(f)$, rather than $\rho(\tilde{f})$, thinking of $\rot(f)$ as a set well defined up to translation by $\ZZ^2$.

As a byproduct of the proof of the convexity of the rotation set, Misiurewicz and Ziemian obtained the following result that we will need. Fix a fundamental domain $D \subset \RR^2$. For any integer $n>0$, let
$$\frac{1}{n}\tilde{f}^{n}(D)=\left\{ \frac{\tilde{f}^{n}(x)}{n} : x \in D \right\}.$$
 
 \begin{lemma}[Misiurewicz-Ziemian \cite{MZ}] \label{lem:MZ}
 The sequence of compact subsets $(\frac{1}{n}\tilde{f}^{n}(D))$ converges to $\rho(\tilde{f})$ in the Hausdorff topology.
 \end{lemma}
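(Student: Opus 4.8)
The plan is to recover this as a direct corollary of the Misiurewicz--Ziemian description of the rotation set as the convex hull of the pointwise rotation set, by comparing the iterated image $\frac{1}{n}\tilde f^n(D)$ against $\rho(\tilde f)$ in both directions of Hausdorff containment. First I would reduce to the statement that for every $\varepsilon>0$ there is $N$ such that for all $n\geq N$ one has both $\frac{1}{n}\tilde f^n(D)\subset N_\varepsilon(\rho(\tilde f))$ and $\rho(\tilde f)\subset N_\varepsilon\bigl(\frac{1}{n}\tilde f^n(D)\bigr)$; since each of these sets is compact this is exactly Hausdorff convergence.

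For the first inclusion, suppose it fails: then there is $\varepsilon>0$, a subsequence $n_i\to\infty$, and points $x_i\in D$ with $d\bigl(\frac{1}{n_i}\tilde f^{n_i}(x_i),\rho(\tilde f)\bigr)\geq\varepsilon$. The sequence $\frac{\tilde f^{n_i}(x_i)-x_i}{n_i}$ differs from $\frac{\tilde f^{n_i}(x_i)}{n_i}$ by $\frac{x_i}{n_i}$, which tends to $0$ because $D$ is bounded; hence these vectors also stay at distance $\geq\varepsilon/2$ from $\rho(\tilde f)$ for large $i$. But such vectors are bounded (continuity of $\tilde f$ on the compact set $\overline D$ together with the cocycle estimate $|\tilde f^{n}(x)-x|\leq n\max_{\overline D}|\tilde f-\id|$ using equivariance under $\ZZ^2$), so after passing to a further subsequence they converge to some $v\notin\rho(\tilde f)$. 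This directly contradicts the definition of $\rho(\tilde f)$ as a set of such limits. For the reverse inclusion, take $v\in\rho(\tilde f)$; by the Misiurewicz--Ziemian theorem $v$ is a convex combination of finitely many elements of the pointwise rotation set $\rho_p(\tilde f)$, so it suffices to treat $v\in\rho_p(\tilde f)$ and then use that $\frac{1}{n}\tilde f^n(D)$ is, up to an $o(1)$ Hausdorff error, close to being convex---or more cleanly, quote the fact that $\rho(\tilde f)$ equals the closed convex hull of $\bigcap_{N}\overline{\bigcup_{n\geq N}\frac{1}{n}\tilde f^n(D)}$, which is precisely the content extracted from \cite{MZ}. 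Concretely: for $v\in\rho_p(\tilde f)$ pick $x$ with $\frac{\tilde f^{n_i}(x)-x}{n_i}\to v$; translating $x$ by an integer vector to land in $D$ changes nothing about this limit, so $v$ is approximated by points of $\frac{1}{n_i}\tilde f^{n_i}(D)$, giving $\rho_p(\tilde f)\subset N_\varepsilon(\frac1n\tilde f^n(D))$ for large $n$, and convexity of the limit is then inherited because the Hausdorff limit of the sets $\frac1n\tilde f^n(D)$ (which exists along subsequences by the boundedness above) must be convex by the standard Fekete-type subadditivity argument underlying the convexity of $\rho(\tilde f)$.

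I expect the main obstacle to be bookkeeping around which precise statement from \cite{MZ} to invoke: the cleanest route is to cite that paper for the fact that the sets $K_n:=\frac1n\tilde f^n(D)$ form a sequence whose Hausdorff limit exists and equals $\rho(\tilde f)$ essentially by construction---this is how Misiurewicz and Ziemian \emph{define} and prove convexity of the rotation set---so the ``proof'' here is really just the observation that our $\rho(\tilde f)$ (defined via $\tilde f^n(x)-x$) agrees with their limit set, which is the $\frac{x}{n}\to 0$ estimate above. An honest write-up should therefore either (a) cite \cite{MZ} for the convergence $K_n\to L$ for some compact $L$ and prove $L=\rho(\tilde f)$ by the two containments, or (b) cite \cite{MZ} for the whole statement and only record why the fundamental domain $D$ may be replaced by any other choice (independence of $D$ follows since two fundamental domains differ by a bounded set, again killed by the $\frac1n$). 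I would go with option (a), keeping the argument self-contained modulo the existence of the subsequential Hausdorff limit.
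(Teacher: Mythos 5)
The paper does not prove this lemma: it is quoted directly from Misiurewicz--Ziemian~\cite{MZ} as ``a byproduct of the proof of the convexity of the rotation set,'' so there is no in-paper argument to compare yours against. Your upper inclusion is correct as written, and the $x_i/n_i\to 0$ reconciliation between $\tfrac1n\tilde f^n(D)$ and $\tfrac1n(\tilde f^n-\mathrm{id})(D)$ is exactly the right observation for translating between the paper's formulation and the one in~\cite{MZ}.

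The lower inclusion, however, has a genuine gap. Your concrete argument produces, for each $v\in\rho_p(\tilde f)$, only a \emph{subsequence} $n_i$ along which $v\in N_\varepsilon\bigl(\tfrac{1}{n_i}\tilde f^{n_i}(D)\bigr)$; the assertion that this ``gives $\rho_p(\tilde f)\subset N_\varepsilon(\tfrac1n\tilde f^n(D))$ for large $n$'' is an over-claim. To promote subsequential approximation to genuine Hausdorff convergence you need two inputs: that the full sequence $K_n:=\tfrac1n\tilde f^n(D)$ has a Hausdorff limit $L$ (so the subsequence statement yields $\rho_p(\tilde f)\subset L$), and that $L$ is convex (so $\rho(\tilde f)=\overline{\mathrm{conv}}\,\rho_p(\tilde f)\subset L$, whence the two inclusions force $L=\rho(\tilde f)$). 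Neither is a formal consequence of the statement ``$\rho(\tilde f)$ is compact, convex, and equals $\overline{\mathrm{conv}}\,\rho_p(\tilde f)$.'' Both rest on the subadditivity
\[
\tfrac{1}{n+m}\bigl(\tilde f^{\,n+m}-\mathrm{id}\bigr)(\RR^2)\;\subset\;\tfrac{n}{n+m}\,\tfrac1n\bigl(\tilde f^{\,n}-\mathrm{id}\bigr)(\RR^2)\;+\;\tfrac{m}{n+m}\,\tfrac1m\bigl(\tilde f^{\,m}-\mathrm{id}\bigr)(\RR^2),
\]
and the ensuing analysis in~\cite{MZ}; that is the engine of their proof, not something recoverable for free from its conclusion (note, for instance, that $L\subset\tfrac12(L+L)$ follows from subadditivity plus convergence, but the reverse inclusion needed for convexity does not). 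So your option~(a) is not really self-contained modulo only ``existence of the subsequential Hausdorff limit'' (which is automatic by compactness of the hyperspace and does not help): it still secretly cites~\cite{MZ} for convexity of $L$. The clean choice is your option~(b) --- cite~\cite{MZ} for the full convergence and, if one wishes, record only the trivial independence from the fundamental domain and the $x/n\to 0$ step --- and that is precisely what the paper does.
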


As mentioned in the introduction, there is a rich and well-developed theory relating the dynamics of torus homeomorphisms to the geometric and topological properties of their rotation sets.  A general introduction can be found in the original work of Misiurewicz and Ziemian \cite{MZ}, and a more detailed description of recent developments in the survey \cite{Beguin} (in French).   We will need the following two important results.  

\begin{theorem}[Franks \cite{Franks}] If $(a/q, b/q)$ is a point with rational coordinates in the interior of $\rot(\tilde{f})$, then $f$ has a periodic point of period $q$.
\end{theorem}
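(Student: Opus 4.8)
This is a classical result of Franks \cite{Franks}, and the plan is to follow his approach. There are two parts: reduce the statement to the existence of a fixed point of a suitable lift whose rotation set contains $0$ in its interior, and then produce such a fixed point from the Brouwer plane translation theorem together with Poincaré recurrence. For the reduction, consider the lift $\tilde g \coloneq \tilde f^q - (a,b)$, an orientation-preserving homeomorphism of $\RR^2$ commuting with all integer translations. By property (1) of rotation sets, $\rot(\tilde g) = q\,\rot(\tilde f) - (a,b)$, so $(0,0)$ lies in the interior of $\rot(\tilde g)$. A fixed point $x$ of $\tilde g$ satisfies $\tilde f^q(x) = x + (a,b)$, hence projects to a periodic point of $f$ of period dividing $q$ (equal to $q$ when $(a/q,b/q)$ is in lowest terms) with rotation vector $(a/q,b/q)$. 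So it suffices to prove: if $\tilde F$ is an orientation-preserving homeomorphism of $\RR^2$ commuting with $\ZZ^2$ and $0 \in \mathrm{int}\,\rot(\tilde F)$, then $\tilde F$ has a fixed point.

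For this, recall the Misiurewicz--Ziemian description of $\rot(\tilde F)$ as the closed convex hull of the rotation vectors $\rho(\mu) = \int_T(\tilde F(\tilde x) - \tilde x)\,d\mu(x)$ of ergodic $F$-invariant probability measures. Since $0$ is an interior point of a planar convex set, one can choose finitely many such measures $\mu_1,\dots,\mu_m$ with $0$ in the interior of $\mathrm{conv}\{\rho(\mu_1),\dots,\rho(\mu_m)\}$, so that $0 = \sum_i \lambda_i\rho(\mu_i)$ with all $\lambda_i>0$ and the $\rho(\mu_i)$ positively spanning $\RR^2$. Birkhoff's ergodic theorem applied to the displacement cocycle gives, for each $i$, a point $x_i$ with $\tfrac1n(\tilde F^n(\tilde x_i)-\tilde x_i)\to\rho(\mu_i)$, and by Poincaré recurrence we may take $x_i$ recurrent. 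Now suppose, for contradiction, that $\tilde F$ is fixed-point-free. Choosing small pairwise disjoint disks $U_i\ni x_i$, recurrence together with the drift estimate produces integers $n_i$ and lattice vectors $w_i\in\ZZ^2$ with $\tilde F^{n_i}(\tilde U_i)\cap(\tilde U_i+w_i)\neq\emptyset$ and $w_i/n_i$ close to $\rho(\mu_i)$; as the $w_i$ then positively span $\RR^2$, there are positive integers $c_i$ with $\sum_i c_i w_i = 0$. Using equivariance one assembles from this data a periodic disk chain for $\tilde F$ whose total translation vector is $\sum_i c_i w_i = 0$. But a fixed-point-free orientation-preserving homeomorphism of the plane admits no such chain — this is Franks' lemma, a standard consequence of the Brouwer plane translation theorem — which is the desired contradiction. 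Hence $\tilde F$ has a fixed point.

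The main obstacle is the disk-chain step: linking the separate return blocks of the points $x_i$ into a single \emph{closed} chain while keeping the total translation equal to $0$ and the disks disjoint is not automatic in the plane. The cleanest way to organize this is to descend to the open annulus $A=\RR^2/\langle(1,0)\rangle$, on which $\tilde F$ induces a homeomorphism: since the $\rho(\mu_i)$ surround $0\in\RR^2$, their first coordinates take both signs, and one is reduced to the statement that an orientation-preserving annulus homeomorphism possessing invariant probability measures of positive and of negative rotation number (with respect to a fixed lift) has a lift with a fixed point — Franks' Poincaré--Birkhoff-type theorem \cite{Franks} — where the cyclic topology of the annulus is precisely what allows the positively- and negatively-winding return chains to be spliced into a closed one.
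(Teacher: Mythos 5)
The paper does not prove this statement; it is cited verbatim from Franks \cite{Franks} as a background result, so there is no internal argument to compare against. Your sketch is a reasonable reconstruction of Franks' own strategy. The reduction to a fixed-point problem for $\tilde g = \tilde f^q - (a,b)$ is correct (and your caveat about lowest terms is exactly the hypothesis in Franks' Theorem~3.5), the appeal to the Misiurewicz--Ziemian description of $\rho(\tilde F)$ as the convex hull of rotation vectors of ergodic measures is the right tool, and the step producing positive integers $c_i$ with $\sum_i c_i w_i = 0$ from positively spanning integer vectors goes through: the solution set of $\sum c_i w_i = 0$ is a rational subspace meeting the open positive orthant, so it contains a positive rational (hence, after clearing denominators, a positive integer) point.

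The gap is in the final annulus step, which you correctly flag as the crux but do not close. You propose to descend to $A = \RR^2/\langle (1,0)\rangle$ and invoke a Poincar\'e--Birkhoff-type result, citing ``invariant probability measures of positive and of negative rotation number.'' But the $F$-invariant probability measures $\mu_i$ on $T^2$ do not descend to finite $F_A$-invariant measures on $A$: the lift of $\mu_i$ to the $\ZZ$-cover $A$ is only $\sigma$-finite, and when $\rho(\mu_i)$ has nonzero vertical component the lifted orbits escape to infinity in $A$. Franks' annulus theorem (his Theorem~2.2) needs chain recurrence --- or, equivalently, some compact $F_A$-invariant set --- and this is \emph{not} automatic from compactness of $T^2$: chain recurrence of $F$ on the torus does not lift to chain recurrence of $F_A$ on the non-compact cover, and a pure vertical drift on $A$ has empty chain recurrent set. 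Franks' actual proof contains a genuine argument at this point, showing that having rotation vectors on both sides of the vertical line (which your interior hypothesis supplies) forces the existence of essential chain recurrence in $A$, after which the annulus fixed-point theorem applies. As written, your proposal assumes precisely the hypothesis that needs to be established, so while the blueprint matches Franks', the load-bearing step is left open.
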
   

\begin{theorem}[Llibre--MacKay, \cite{LM}] \label{thm:LM}
Suppose $f \in \Homeo_0(T)$ has rotation set with non-empty interior.  Then there exists a finite $f$-invariant set $P$ such that the restriction of $f$ to $T - P$ is a pseudo-Anosov mapping class.  
\end{theorem}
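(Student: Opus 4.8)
This is a known result (a consequence of Franks' periodic orbit theorem and the Nielsen--Thurston classification), so the task is really to reconstruct a proof; the plan is to build $P$ in two stages. First, using convexity of $\rot(\tilde f)$ (Misiurewicz--Ziemian), I would pick three affinely independent points $v_1,v_2,v_3\in\QQ^2$ in the interior of $\rot(\tilde f)$ and write $v_i=(a_i/q_i,b_i/q_i)$. By Franks' theorem each $v_i$ is the rotation vector of a periodic orbit $Q_i$ of $f$ of period $q_i$; since a periodic point has a single well-defined rotation vector and the $v_i$ are pairwise distinct, the orbits $Q_i$ are pairwise disjoint. Put $P_0=Q_1\cup Q_2\cup Q_3$, a finite $f$-invariant set with $\chi(T-P_0)<0$, so the Nielsen--Thurston classification applies to $\varphi_0=[f|_{T-P_0}]$, which lies in the kernel of the forgetful map $\mcg(T-P_0)\to\mcg(T)$ since $f\in\Homeo_0(T)$.

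The heart of the matter is to exclude the possibility that $\varphi_0$ is periodic or reducible with no pseudo-Anosov piece. The key point is that, although the rotation set of a torus homeomorphism is not an isotopy invariant, the rotation vector of a \emph{given} periodic orbit is determined by the isotopy class rel $P_0$: an isotopy fixing $P_0$ pointwise lifts to an isotopy of $\RR^2$ fixing the preimage of $P_0$ pointwise, leaving unchanged the translation behaviour along the orbit. Hence if $\varphi_0$ were periodic, realising it by a finite-order homeomorphism $h$ of $T$ fixing $P_0$ (Nielsen realisation) and taking the lift isotopic to the identity would give $\tilde h^N=T_w$ for some $w\in\ZZ^2$, forcing $v_1=v_2=v_3=w/N$ --- contradicting the choice of the $v_i$.

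For the reducible case one argues piece by piece along the canonical reduction system. A reducing curve $c$ essential in $T$ is fixed up to isotopy by a power $\varphi_0^m$, realised by some $\bar h_m\in\Homeo_0(T)$ with $\bar h_m(c)=c$; the orbits $Q_i$ remain periodic for $\bar h_m$ with rotation vectors $mv_i$, and a homeomorphism in $\Homeo_0(T)$ preserving an essential curve has rotation set contained in the line through the origin in the direction of $[c]$, so $v_1,v_2,v_3$ would be collinear. A reducing curve inessential in $T$ bounds a disk containing at least two punctures; since a disk cannot wind around $T$, every periodic orbit in the $f$-orbit of that disk has the same rotation vector, so it can account for at most one of the $v_i$. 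And a finite-order piece of the canonical form again contributes only a single rotation vector, by the argument of the previous paragraph applied to that piece. Assembling these constraints, if $\varphi_0$ had no pseudo-Anosov piece the $v_i$ could not be affinely independent; hence a pseudo-Anosov piece is present.

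Finally I would enlarge $P_0$ to the desired $P$. The previous step rules out reducing curves essential in $T$, so every curve of the canonical reduction system of $\varphi_0$ bounds a disk in $T$ containing punctures; collapsing each outermost such disk to a point produces a finite $f$-invariant set $P$ on whose complement the induced mapping class has empty canonical reduction system. It is not periodic, since the pseudo-Anosov piece survives the collapse and retains its exponential intersection growth; therefore $[f|_{T-P}]$ is pseudo-Anosov, as required. The main obstacle is the bookkeeping in the reducible case: enumerating the possible reducing curves on a punctured torus, verifying in each case that they confine the relevant rotation vectors to a lower-dimensional set, and checking that collapsing the disk pieces genuinely destroys the reduction system while preserving a pseudo-Anosov piece; the remaining ingredients are a routine combination of Franks' theorem, the Nielsen--Thurston classification, and convexity of the rotation set.
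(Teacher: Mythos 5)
The paper does not prove this theorem: it cites Llibre--MacKay \cite{LM} and gives only a one-sentence description of the idea (``the orbits of any three periodic points with non-collinear rational rotation vectors gives the desired set $P$''). Your reconstruction follows exactly that blueprint --- Franks' theorem to produce three orbits $Q_1,Q_2,Q_3$ with affinely independent rotation vectors, and then the Nielsen--Thurston trichotomy on $T-P_0$, with the periodic and essential-reduction cases ruled out by the collinearity-of-rotation-vectors argument. Those two eliminations are sound and are essentially what is in \cite{LM}.

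There is, however, a genuine gap in the final step. Your treatment of disk-bounding reducing curves ends with ``collapsing each outermost such disk to a point produces a finite $f$-invariant set $P$.'' But collapsing a disk to a point produces a point of a quotient surface, not a point of $T$; the theorem requires a finite $f$-invariant subset of $T$ itself. To repair this you would have to replace each periodic family of disks by an actual periodic orbit of $f$ inside it. That is plausible --- one wants a forward-invariant topological disk for some iterate and then Brouwer's fixed point theorem, together with a check that the resulting periodic orbit has the same rotation vector $v_i$ as the points it replaces and that the Thurston class rel the shrunken set is the one predicted by the collapse --- but none of this is automatic, and $f^m$ maps the disk only to an isotopic disk, not to itself, so the Brouwer argument is not immediate. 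In the Llibre--MacKay argument this complication does not arise: they show that with three non-collinear rotation vectors the Thurston class rel $P_0 = Q_1\cup Q_2\cup Q_3$ is already pseudo-Anosov, so no enlargement or collapsing is needed. (The reason is essentially your own observation: a disk-bounding reducing curve would enclose punctures sharing a rotation vector, hence all from a single $Q_i$; but then the boundary pieces of the canonical form outside the disks give a punctured torus whose Thurston piece must be periodic or pseudo-Anosov, and the periodic case again yields collinearity.) Your argument gets the right conclusion but via an unsupported step; either justify the existence of genuine periodic points replacing the collapsed disks, or, following \cite{LM}, argue directly that the canonical reduction system must be empty.

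A smaller imprecision: in the periodic case you invoke Nielsen realisation to obtain a finite-order homeomorphism $h$ of $T$ fixing $P_0$ with $\tilde h^N = T_w$; for this you should make explicit that you are choosing the lift of $\bar h$ that coincides with the given lift $\tilde f$ over $P_0$, since it is only with that normalisation that the rotation vectors of the $Q_i$ under $\bar h$ are forced to equal $v_i$.
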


The existence of the finite set $P$ comes from Franks's theorem quoted above. Llibre--Mackay's insight was to show that the orbits of any three periodic points with non-collinear rational rotation vectors gives the desired set $P$ with the pseudo-Anosov property.

\subsection*{Dehn twists} 
As discussed in \cite{Doeff}, homeomorphisms of $T$ isotopic to a Dehn twist map also have a rotation set relative to the direction of the twist.  We explain this now.  
\begin{definition}
Let $\alpha$ be a curve of $T$. We say a homeomorphism $f$ of $T$ is a \textit{Dehn twist map around} $\alpha$ when the following hold
\begin{enumerate}
\item the support of $f$ is contained in an embedded annulus $A\subset T$ which contains $\alpha$, and
\item there exists a chart $\varphi\colon A \rightarrow \mathbb{R} / \mathbb{Z}\times [0,1]$ which sends $\alpha$ to $\mathbb{R} / \mathbb{Z} \times \left\{ 1/2 \right\}$ and $r \in \mathbb{Z} \setminus \left\{ 0 \right\}$ such that $\varphi f_{|A} \varphi^{-1}= \tau^r$, where
$$\begin{array}{rrcl}
\tau\colon & \mathbb{R} /\mathbb{Z} \times [0,1] & \rightarrow & \mathbb{R} /\mathbb{Z} \times [0,1] \\
 & (x,y) & \mapsto & (x+y,y)
 \end{array} .$$ 
\end{enumerate}
\end{definition}
Fix a simple (oriented) essential loop $\alpha$ and let $\check{T}$ denote the associated cyclic cover of $T$. We fix an identification of $\check{T}$ with $\mathbb{R} / \mathbb{Z} \times \mathbb{R}$, with deck group equal to the integral translations in the second coordinate, oriented so that a lift of $\alpha$ is oriented in the positive direction.   Let $f \in \Homeo(T)$ be isotopic to a Dehn twist map around $\alpha$, and let 
 $\check{f}\colon \check{T} \rightarrow \check{T}$ be a lift of $f$.    Let $p_2 \colon \check{T}= \mathbb{R}/\mathbb{Z} \times \mathbb{R} \rightarrow \mathbb{R}$ be the projection to the second co-ordinate, and 
$$\begin{array}{rrcl}
t \colon & \check{T} & \rightarrow & \check{T} \\

 & (x,y) & \mapsto & (x,y+1)
 \end{array}.$$
\begin{definition} \label{def:rot_general} \cite{Doeff}
The rotation set $\rho_\alpha(\check{f})$ of the lift $\check{f}$ is the subset of $\mathbb{R}$ consisting of accumulation points of 
$$ \left\{ \frac{p_{2}(\check{f}^{n}(\check{x}))-p_{2}(\check{x})}{n}: n \geq 1 \ \text{and} \ \check{x} \in \check{T} \right\}.$$
\end{definition}

This set is a segment of $\mathbb{R}$. Similarly to the case of homeomorphisms isotopic to the identity, it follows from the definition that $\rho_\alpha(f) = \rho_{g\alpha}(gfg^{-1})$ for any homeomorphism $g$ of $T$, and 
 for any integers $p$ and $q$, we have
$$ \rho(t^p \check{f}^q)=q\rho(\check{f})+p.$$ 
Thus, two lifts of $f$ to $\check{T}$ have rotation sets which differ by an integral translation of $\mathbb{R}$, and so we use $\rho_\alpha(f)$ (and occasionally $\rho(f)$ when $\alpha$ is understood), rather than $\rho_\alpha(\check{f})$ to mean the rotation set of any fixed lift when we wish to speak of properties independent of choice of lift.  
		
\section{Asymptotic Translation Length} \label{sec:continuous} 
In this section we prove continuity of asymptotic translation length.  As a direct consequence, one may conclude that hyperbolicity for the action on $\cd(S)$ is an open condition on $\Homeo(S)$.  Before embarking on the proof, we take a brief detour to discuss the contrasting results that there are hyperbolic elements arbitrarily close to the identity, as well as open sets of elliptic elements. 

Here, and for the remainder of the paper, $|\cdot|$ always denotes
asymptotic translation length on the curve graph $\cd(S)$.

\begin{lemma}\label{lem:pa-root}
	For any closed surface $S$ of genus at least $1$, 
	any $C^0$ neighbourhood of the identity contains homeomorphisms acting hyperbolically on $\cd(S)$.
\end{lemma}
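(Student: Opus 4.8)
The plan is to exhibit, in any $C^0$-neighbourhood of the identity, a homeomorphism that fixes a finite set $P$ and represents a pseudo-Anosov mapping class on $S - P$, so that hyperbolicity follows from Theorem~\ref{thm:TL} and the Masur--Minsky lower bound. The mechanism is simple: pseudo-Anosov mapping classes exist on $S - P$ whenever $|P|$ is large enough (for the torus, $|P| \geq 1$ suffices, and for higher genus, $|P| = 0$ already works since $S$ itself has pseudo-Anosov maps, though one wants $P \neq \emptyset$ only if needed to make the homeomorphism isotopically trivial); the key point is that such a mapping class can be realized by a homeomorphism supported in an arbitrarily small neighbourhood of a suitable one-complex, or alternatively, that conjugating by a homeomorphism that crushes everything into a small ball makes the representative $C^0$-small.

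First I would fix a finite set $P \subset S$ (say a single point, or a small collection of points contained in a tiny disk $D$) and choose a homeomorphism $\varphi \colon S \to S$ fixing $P$ pointwise and representing a pseudo-Anosov class on $S - P$; such $\varphi$ exists by Thurston's construction, and we may take $\varphi$ isotopic to the identity on $S$ by absorbing the isotopy class into the choice of $P$ (on the torus, a pseudo-Anosov on the once-punctured torus is given by a product of powers of Dehn twists about the two standard curves, and the corresponding homeomorphism of $T$ fixing the puncture is isotopically trivial). Then I would conjugate: pick a homeomorphism $h_\varepsilon \colon S \to S$, isotopic to the identity, which maps $S - B$ into an $\varepsilon$-ball, where $B$ is a small ball around $P$; on the region $h_\varepsilon(S - B)$ one has fine control, and outside it the map $h_\varepsilon \varphi h_\varepsilon^{-1}$ is supported in a region of diameter $O(\varepsilon)$. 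Arranging the $C^0$ bound on $h_\varepsilon \varphi h_\varepsilon^{-1}$ to tend to $0$ requires care: the standard trick is to note that $\varphi$ is the identity near $P$, so near the ball $B$ it contributes nothing, and on the complement $h_\varepsilon$ shrinks distances, making the conjugate $C^0$-close to the identity. Since asymptotic translation length is a conjugacy invariant, $|h_\varepsilon \varphi h_\varepsilon^{-1}| = |\varphi| > 0$, so every such conjugate is hyperbolic.

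The \textbf{main obstacle} is the $C^0$-smallness estimate for the conjugate $h_\varepsilon \varphi h_\varepsilon^{-1}$: one must choose $h_\varepsilon$ so that $d(h_\varepsilon \varphi h_\varepsilon^{-1}(x), x)$ is uniformly small for all $x$, not merely on the part of the surface where $h_\varepsilon$ is contracting. The resolution uses that $\varphi$ is supported away from a neighbourhood $U$ of $P$, so $h_\varepsilon \varphi h_\varepsilon^{-1}$ is supported in $h_\varepsilon(S - U)$; if $h_\varepsilon$ is chosen so that $h_\varepsilon(S - U)$ has diameter at most $\varepsilon$ and every point of $S - h_\varepsilon(S-U)$ is fixed, then displacements are bounded by $2\varepsilon$ directly. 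Constructing such $h_\varepsilon$ isotopic to the identity on a general surface $S$ is elementary (push a complement of a small disk into that disk via an ambient isotopy), but I would want to write it out cleanly, perhaps citing a standard "conjugation into a small ball" lemma. Alternatively, one can sidestep the conjugation entirely on the torus by directly building $\varphi$ supported in a small neighbourhood of the wedge of the two standard curves and checking the mapping class on the punctured torus is pseudo-Anosov; this gives the statement more transparently but is less uniform across genera.
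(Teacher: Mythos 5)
There is a topological gap in the proposed construction that cannot be repaired, and also a conceptual reason the whole conjugation strategy cannot work.

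The gap: the homeomorphism $h_\varepsilon$ you describe does not exist. You want $h_\varepsilon$, a homeomorphism of $S$ isotopic to the identity, to map $S - B$ into an $\varepsilon$-ball (where $B$ is a small disk, or union of disks, around $P$). But $h_\varepsilon(S-B) = S - h_\varepsilon(B)$ is a subsurface homeomorphic to $S-B$, hence of genus $g \geq 1$, and therefore contains essential simple closed curves of $S$; no such subsurface can be contained in a topological disk, let alone an $\varepsilon$-ball. Equivalently, any subsurface of $S$ containing an essential curve has diameter bounded below by the systole of $S$, a positive constant independent of $\varepsilon$. The ``push the complement of a small disk into that disk'' trick is specific to spheres and disks; it fails on any surface of positive genus for exactly this reason. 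The same obstruction defeats your sketched alternative on the torus: any representative supported in a neighbourhood of a wedge of two essential curves must displace points along those curves by a definite amount, so it is not $C^0$-small either.

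The deeper issue is that \emph{no} choice of $h_\varepsilon$ can work, because conjugation preserves asymptotic translation length: $|h_\varepsilon \varphi h_\varepsilon^{-1}| = |\varphi| > 0$ for every $\varepsilon$. If a sequence of conjugates of the fixed hyperbolic element $\varphi$ converged to the identity in $C^0$, then by Theorem~\ref{thm:continuity-TL} (which is proved independently of this lemma) we would get $|\varphi| = |\mathrm{id}| = 0$, a contradiction. So the translation lengths of your candidates cannot stay constant; they must tend to $0$. The paper's construction achieves this by building a $C^0$-small homeomorphism $F = G_1 \circ G_0$ that is not itself pseudo-Anosov but is a $k$-th \emph{root} of one: $G_0$ and $G_1$ are supported in disjoint unions of $\varepsilon$-disks and slide the basepoint along consecutive short subarcs of a filling loop $\gamma$, so that $F^k$ is the point-push along $\gamma$, which is pseudo-Anosov on $S-p$ by Kra's theorem. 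Then $F$ is hyperbolic via Theorem~\ref{thm:TL}, and since $|F| = |F^k|/k$ with $k \sim 1/\varepsilon$, the translation length goes to $0$ as $\varepsilon \to 0$, consistent with continuity. Your plan correctly identifies Theorem~\ref{thm:TL} as the engine, but the way to get the input $C^0$-small is to take roots, not conjugates.
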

\begin{proof} First assume that $S$ has genus at least $2$.  Recall that a \textit{filling} loop $\gamma$ means that every loop homotopic to $\gamma$ intersects every essential simple closed curve on $S$. 
	Let $p \in S$ be a point, and let $\gamma\colon [0,1]\to S$ be a smooth, filling loop based at $p$, with transverse self-intersections. 
	
	Next, choose times $0=t_0<\ldots<t_n=1$ so that $\gamma\vert_{[t_i,t_{i+1}]}$ has length at most $\epsilon$.
	By possibly adding more times before self-intersection points, we may assume that $\gamma$ can be covered
	with disks $D_1, \ldots, D_{2k}$ so that
	\begin{enumerate}
		\item Each $D_i$ has diameter $<\epsilon$,
		\item $D_i \cap D_j = \emptyset$ if $i-j$ is even, and
		\item $\gamma\vert_{[t_i,t_{i+1}]} \subset D_i$ for all $i$.
	\end{enumerate}
	We let $G_0$ (respectively, $G_1$) be the endpoint of the isotopy supported in the union of all $D_i$ for $i$ even (respectively, odd) and in each such $D_i$ slides $\gamma(t_i)$ to $\gamma(t_{i+1})$ along $\gamma\vert_{[t_i,t_{i+1}]}$. By choosing $\epsilon$ small enough,
	\[ F = G_1\circ G_0 \]
	is arbitrarily close to the identity. We claim that $F$ acts on $\cd(S)$ hyperbolically. To this end, it suffices
	to show that some power of $F$ does. Observe that $F^{k}$ is a homeomorphism of $S$ fixing $p$, and (by composing the isotopies defining $G_0, G_1$), there is an isotopy 
	from $F^{k}$ to $\mathrm{id}$ whose trace of the point $p$ is exactly the loop $\gamma$. This implies that the isotopy class $[F^{k}]$
	lies in the kernel of the forgetful map
	\[ 1 \to \pi_1(S,p) \to \mathrm{Mcg}(S-p) \to \mathrm{Mcg}(S) \to 1 \]
	and corresponds exactly to the loop $\gamma\in \pi_1(S,p)$ (compare \cite[Section~4.2]{Primer} for this Birman exact sequence).
	Since $\gamma$ is filling, Kra's theorem \cite{Kra} implies that the mapping class $[F^{k}] \in \mathrm{Mcg}(S-p)$
	is pseudo-Anosov. Theorem~\ref{thm:TL} then implies that $F^{k}$ (hence $F$) acts hyperbolically on $\cd(S)$. 

	If $S = T$ is the torus, we need to choose $\gamma$ to be filling in $T\setminus\{q\}$ for some point $q$ (as the Birman exact sequence requires at least one puncture in the case of the torus), and argue analogously.
\end{proof}
\begin{remark}
	We emphasise that the construction in Lemma~\ref{lem:pa-root} is flexible -- the choice of the 
	filling loop $\gamma$ is arbitrary, and we can construct a root of the corresponding point-pushing pseudo-Anosov.
\end{remark}

The following gives a general construction of open sets of elliptic elements which can be taken arbitrarily close to (though not containing) the identity.  

\begin{construction}[Open sets of elliptics] \label{const:openelliptics}
Let $A \subset S$ be an embedded essential, closed annulus, and $f: S \to S$ a homeomorphism isotopic to the identity and sending $A$ into the interior of $A$.  Let $\alpha$ denote a boundary curve of $A$.   There exists a neighborhood of $f$ in $\Homeo_0(S)$ consisting of homeomorphisms which send $A$ into the interior of $A$.  Any such homeomorphism $g$ will satisfy that $g^N(\alpha) \cap \alpha = \emptyset$, giving a bounded orbit. 
\end{construction}


\subsection{Proof of Theorem \ref{thm:continuity}}
We now prove the following result.  
	\begin{theorem} \label{thm:continuity-TL}
		Let $f_m \to f$ in the $C^0$-topology on $\Homeo(S)$. Then \[|f_m| \to |f|.\] In particular, the set of
		homeomorphisms acting hyperbolically on $\cd(S)$ is an open set in the $C^0$-topology.
	\end{theorem}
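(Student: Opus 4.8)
The plan is to base everything on the identity $|g| = \inf_{n\ge 1}\tfrac1n d^\dagger(\alpha, g^n(\alpha)) = \lim_n \tfrac1n d^\dagger(\alpha, g^n(\alpha))$ for a fixed base vertex $\alpha$ (valid since $n\mapsto d^\dagger(\alpha, g^n(\alpha))$ is subadditive, $g$ acting by isometries), together with a single geometric input: \emph{for each fixed $n$, if $g$ is sufficiently $C^0$-close to $f$ then $d^\dagger(g^n(\alpha), f^n(\alpha))\le 2$.} To prove this input I would first bound $d_{C^0}(g^n,f^n)$ by telescoping, writing the difference as a sum of terms of the form $f^j\!\circ\!(g\, g^{n-j-1}) - f^j\!\circ\!(f\, g^{n-j-1})$ and using uniform continuity of the iterates $f^0,\dots,f^{n-1}$ on the compact surface $S$; for $n$ fixed this tends to $0$ as $d_{C^0}(g,f)\to 0$. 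Once $g^n(\alpha)$ lies inside a fixed annular neighbourhood $\mathcal N$ of $f^n(\alpha)$, it is an essential simple closed curve in $\mathcal N$, hence isotopic to the core, and a curve running parallel to $f^n(\alpha)$ just outside $\mathcal N$ is disjoint from both $g^n(\alpha)$ and $f^n(\alpha)$, giving $d^\dagger(g^n(\alpha),f^n(\alpha))\le 2$ (for $S=T$ the edge relation only shrinks distances, so the bound persists). This is essentially the parallel-curve idea already used in Lemma~\ref{lem:intersection-bound}.

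Granting the input, \emph{upper} semicontinuity is immediate: for each fixed $n$ and all $g$ close enough to $f$ one has $|g|\le \tfrac1n d^\dagger(\alpha, g^n(\alpha)) \le \tfrac1n d^\dagger(\alpha, f^n(\alpha)) + \tfrac2n$, so $\limsup_{g\to f}|g| \le \tfrac1n d^\dagger(\alpha, f^n(\alpha)) + \tfrac2n$ for every $n$; letting $n\to\infty$ gives $\limsup_{g\to f}|g|\le |f|$. In particular $|f_m|\to |f|$ whenever $|f|=0$.

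The remaining, and harder, point is \emph{lower} semicontinuity at an $f$ with $|f|>0$. Here I would argue in two stages, using the hyperbolicity constant $\delta$ of $\cd(S)$ from Theorem~\ref{thm:hyperbolicity}. Fix $\epsilon>0$ and choose $n$ \emph{large in terms of $f$, $\alpha$, $\delta$ and $\epsilon$}: large enough that $n$ exceeds the local-to-global threshold for $\delta$-hyperbolic spaces with the quasigeodesic constants appearing below, and that $\bigl(d^\dagger(\alpha,f(\alpha))+O(\delta)\bigr)/n<\epsilon$. Then take $g$ so $C^0$-close to $f$ that $d^\dagger(g^k(\alpha), f^k(\alpha))\le 2$ for all $k\le n$ (possible for fixed $n$, by the input above). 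Stage one: the bi-infinite orbit $(g^k(\alpha))_{k\in\ZZ}$ is then a \emph{local} quasigeodesic on all scales up to $n$, since for $|i-j|\le n$
\[ |f|\,|i-j| - 2 \ \le\ d^\dagger(g^i(\alpha), g^j(\alpha)) \ \le\ d^\dagger(\alpha, g(\alpha))\,|i-j|, \]
the lower bound from $d^\dagger(\alpha, f^{|i-j|}(\alpha))\ge |f|\,|i-j|$ (subadditivity) plus the input, the upper bound from coarse Lipschitzness. By the local-to-global principle for quasigeodesics in hyperbolic spaces, the whole orbit is a global quasigeodesic of positive slope, so $g$ acts hyperbolically. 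Stage two: knowing $g$ is hyperbolic, standard estimates relating displacement, translation length and distance to the axis of a loxodromic isometry give $d^\dagger(\alpha, g^n(\alpha)) \le n|g| + 2\,d^\dagger(\alpha, \mathrm{axis}(g)) + O(\delta)$ and $d^\dagger(\alpha, \mathrm{axis}(g)) \le \tfrac12 d^\dagger(\alpha, g(\alpha)) + O(\delta)$; since $d^\dagger(\alpha, g(\alpha))\le d^\dagger(\alpha, f(\alpha))+2$ is bounded uniformly for $g$ near $f$, combining with $d^\dagger(\alpha, g^n(\alpha))\ge n|f|-2$ yields $|g|\ge |f|-\epsilon$. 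Hence $\liminf_{g\to f}|g|\ge |f|$, which together with the previous paragraph proves continuity; openness of hyperbolicity is then immediate, since $|f|>0$ forces $|g|>|f|/2>0$ on a $C^0$-neighbourhood of $f$.

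I expect the main obstacle to be precisely this lower-semicontinuity step. The difficulty is structural: $g^k$ and $f^k$ cease to be $C^0$-close as $k$ grows, so one cannot compare orbits term-by-term for all $k$, and must instead commit to a finite window $n$ — chosen large \emph{before} deciding how close $g$ must be — and then upgrade good control on that window to control on all of $\ZZ$ via hyperbolicity (local-to-global). Obtaining the \emph{sharp} bound $|g|\ge |f|-\epsilon$ rather than a lossy $c\,|f|$ is what forces stage two and, with it, the uniform bound on the distance from $\alpha$ to the axis of $g$; the input lemma on $C^0$-perturbation of curves is the other ingredient, but it is routine by the surgery/tubular-neighbourhood methods already present in the paper.
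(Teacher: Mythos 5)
Your proposal is essentially correct and shares the same skeleton as the paper's proof (the two sub-lemmas Lemma~\ref{lem:distance-continuity} and Lemma~\ref{lem:transl_length}, upper semicontinuity by taking a large fixed $n$ and letting $g$ approach $f$, then lower semicontinuity as the real content), but the hard step --- lower semicontinuity at $|f|>0$ --- is handled by a genuinely different mechanism.

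Where the paper is self-contained, you reach for off-the-shelf hyperbolic machinery. The paper first adjusts the basepoint: Lemma~\ref{lem:tunnel} chooses $\alpha$ together with a power $N$ so large that $|f^N|>8\delta+2$ and so that $\alpha$ lies within $2\delta$ of the geodesics $[f^{\pm n}\alpha]$ and $[f_m^{\pm N}\alpha]$; Lemma~\ref{lem:4delta} then shows directly that the whole orbit $\{(f_m^N)^i\alpha\}$ stays within $4\delta$ of the geodesic between its endpoints and projects monotonically, after which one simply counts segments of length at least $|f^N|-2-8\delta$. You instead keep an arbitrary $\alpha$, verify that $(g^k\alpha)$ is a local quasigeodesic on the window $|i-j|\le n$ (lower bound from $d^\dagger(\alpha,f^{|i-j|}\alpha)\ge |f|\,|i-j|$ plus the $C^0$-perturbation input), invoke local-to-global to conclude $g$ is hyperbolic, and then appeal to the standard axis estimates $d^\dagger(\alpha,g^n\alpha)\le n|g|+2d^\dagger(\alpha,\mathrm{axis}(g))+O(\delta)$ and $d^\dagger(\alpha,\mathrm{axis}(g))\le\tfrac12 d^\dagger(\alpha,g\alpha)+O(\delta)$. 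This buys conceptual transparency (the displacement-to-axis inequalities \emph{are} the content of Lemmas~\ref{lem:tunnel}--\ref{lem:4delta}), at the cost of citing results whose constants must be tracked carefully.

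One caveat worth flagging: the axis estimates you quote with ``$+O(\delta)$'' error are only universal (i.e.\ with constants depending on $\delta$ alone) once the translation length is comfortably larger than $\delta$; when $|g|$ is small relative to $\delta$, the quasi-axis has degraded constants, and the half-displacement bound on $d^\dagger(\alpha,\mathrm{axis}(g))$ needs the reverse-triangle inequality for projections, which itself requires the axial displacement to exceed a $\delta$-threshold. The paper handles exactly this point by passing to the power $N$ chosen so that $|f^N|>8\delta+2$ (and hence, after Lemma~\ref{lem:transl_length}, $|f_m^N|$ is also uniformly large); your write-up does not pass to a power and so, as written, the ``$O(\delta)$'' error in stage two secretly depends on $1/|g|$, which is only controlled through the lossy stage-one bound. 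This is fixable by replacing $g$ by $g^N$ for an $N$ chosen as in the paper before applying the axis estimates; with that amendment the argument goes through and gives $|g|\ge |f|-\epsilon$, hence both continuity and openness of hyperbolicity. Your telescoping proof of the $C^0$-perturbation input is also slightly heavier than necessary --- the paper simply applies Lemma~\ref{lem:distance-continuity} to $g_m=f_m^n$ and $g=f^n$, using that composition is $C^0$-continuous on the compact surface --- but both are fine.
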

This is the main technical content of Theorem \ref{thm:continuity} stated in the introduction, indeed we have: 

\begin{proof}[Proof of Theorem \ref{thm:continuity} given Theorem \ref{thm:continuity-TL}]
Openness of hyperbolicity is immediate from the continuity of asymptotic translation length.  
What remains to show is that all positive real values are realized.  In \cite{BHW}, it is shown that taking powers of elements which act as pseudo-Anosov homeomorphisms relative to a fixed finite set produces elements with arbitrarily large translation length.  Since $\Homeo_0(S)$ is connected, this implies all positive real values are attained.  
\end{proof} 

	The proof of Theorem \ref{thm:continuity-TL} will occupy the rest of the section. 
        We begin with the following elementary lemma.
        \begin{lemma}\label{lem:distance-continuity}
          Suppose that $g_m$ is a sequence of homeomorphisms converging to a homeomorphism
          $g$ in the $C^0$-topology, and let $\alpha$ be an essential simple closed curve. Then, for all
          $m$ large enough, we have
          \[ d^\dagger(g_m\alpha, g\alpha) \leq 2. \]
        \end{lemma}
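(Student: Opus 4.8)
The plan is to exploit the fact that $C^0$-convergence $g_m \to g$ means $g_m g^{-1} \to \id$ uniformly, so that for $m$ large the homeomorphism $h_m := g_m g^{-1}$ moves every point by less than some small $\epsilon$; applying $h_m$ to the fixed curve $g\alpha$ should produce a curve intersecting $g\alpha$ in a controlled way, and then Lemma~\ref{lem:intersection-bound} (or a direct surgery argument) finishes the job. Concretely, $d^\dagger(g_m\alpha, g\alpha) = d^\dagger(h_m(g\alpha), g\alpha)$, so it suffices to bound the distance between $g\alpha$ and its image under a homeomorphism uniformly close to the identity.

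First I would fix a smooth representative $\beta$ in the isotopy class of $g\alpha$, or just work with $g\alpha$ directly after noting it can be isotoped to be smooth; choose $\epsilon$ smaller than, say, a Lebesgue-type constant for a tubular neighborhood $N$ of $g\alpha$, so that if $h_m$ displaces points by less than $\epsilon$ then $h_m(g\alpha) \subset N$ and moreover $h_m(g\alpha)$ is isotopic inside $N$ to the core. In an annular neighborhood the core and any curve isotopic to it can be realized disjointly, so $d^\dagger(h_m(g\alpha), g\alpha) \le 1$. This already gives the bound $\le 2$ with room to spare. The only subtlety is ensuring $h_m(g\alpha)$ really is isotopic to the core of $N$ rather than, e.g., nullhomotopic or wrapping differently — but since $h_m$ is isotopic to the identity (being close to it in the connected group $\Homeo_0$, or at any rate: uniformly small homeomorphisms are isotopic to the identity via the straight-line isotopy $h_m^t$ which for $m$ large stays within displacement $\epsilon$), $h_m(g\alpha)$ is isotopic to $g\alpha$ in $S$, hence freely homotopic to the core of $N$, hence isotopic to it within $N$ by the standard fact that a simple closed curve in an annulus isotopic to the core is isotopic to the core inside the annulus.

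An even more elementary route, avoiding any discussion of $N$: pick the straight-line isotopy $(h_m^t)_{t\in[0,1]}$ from $\id$ to $h_m$; for $m$ large, $h_m^t$ displaces points by less than the injectivity radius of $S$ for all $t$, so $t \mapsto h_m^t(g\alpha)$ is an isotopy through embedded curves staying in a thin neighborhood of $g\alpha$, and in particular $h_m^t(g\alpha)$ and $g\alpha$ can be made transverse with $\#(h_m^1(g\alpha)\cap g\alpha)$ small — indeed one can arrange this to be empty by a final small perturbation, since the two curves are isotopic and stay close. Then Lemma~\ref{lem:intersection-bound} gives $d^\dagger \le 2$.

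The main obstacle, such as it is, is bookkeeping: making precise the quantifier ``for $m$ large enough'' in terms of a modulus of continuity and an injectivity-radius or tubular-neighborhood constant depending only on $g\alpha$, and verifying that a uniformly small homeomorphism sends $g\alpha$ to an isotopic (not just homotopic) curve. Neither step is deep — the former is a standard compactness argument on the compact surface $S$, and the latter follows from the straight-line isotopy staying small — but they are the points where care is needed. I expect the cleanest write-up uses the tubular neighborhood: choose $N$ a closed annular neighborhood of $g\alpha$, let $2\epsilon$ be less than the distance from $g\alpha$ to $\partial N$, take $m$ large so $\sup_{x} d(g_m(x), g(x)) < \epsilon$, conclude $g_m\alpha = h_m(g\alpha) \subset N$ and is isotopic to the core, hence disjoint from a parallel copy of $g\alpha$ in $N$, giving $d^\dagger(g_m\alpha, g\alpha) \le 2$ (in fact $\le 1$).
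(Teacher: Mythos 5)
Your approach is essentially the paper's: the paper takes a small collar neighbourhood $C$ of $g\alpha$, observes that for $m$ large $g_m\alpha\subset\mathrm{int}(C)$, and concludes $d^\dagger(g_m\alpha,g\alpha)\le 2$ via the (essential, since parallel to $g\alpha$) boundary curve of $C$. Your ``cleanest write-up'' paragraph is precisely this. One small correction: your parenthetical ``in fact $\le 1$'' is not justified, since $\cd$ is a graph on concrete curves rather than isotopy classes, and $g_m\alpha$ and $g\alpha$ may well be transverse even though both lie inside $N$ and are isotopic; the path through the boundary curve of $N$ genuinely has length $2$, which is what the lemma asserts. The discussion of straight-line isotopies and $h_m$ being isotopic to the identity is also unnecessary overhead: once $g_m\alpha$ lies in $\mathrm{int}(N)$ it is automatically disjoint from $\partial N$, and $\partial N$ is disjoint from $g\alpha$ by construction, which is all one needs.
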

        \begin{proof}
          Let $C$ be a small collar neighbourhood of $g\alpha$, so that the boundary
          of $C$ consists of two simple closed curves disjoint from $g\alpha$. Then, for
          all $m$ large enough, we have
          \[ g_m\alpha \subset \mathrm{int}(C), \]
          which implies the claim.
        \end{proof}
        
        We now pick a sequence $f_m$ with limit $f$ as in Theorem~\ref{thm:continuity-TL}.
	\begin{lemma} \label{lem:transl_length}
          Given any $n\in \mathbb{N}$ and essential simple closed curve
          $\alpha$, for all sufficiently large $m$ we have 
          \[ n|f_m|=|f_m^n| \leq d^\dagger(\alpha, f^n \alpha) +2.\]
	\end{lemma}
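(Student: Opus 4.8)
The plan is to exploit the fact that $\lvert f_m^n\rvert = n\lvert f_m\rvert$ by the definition of asymptotic translation length (the limit is over $n\to\infty$ along any subsequence, so passing to the $n$-th power just multiplies it), and that $\lvert f_m^n\rvert$ is bounded above by $d^\dagger(\alpha, f_m^n\alpha)$ for \emph{any} fixed vertex $\alpha$, again straight from the definition: $\lvert g\rvert_X = \lim_k \tfrac1k d_X(\alpha, g^k\alpha) \le d_X(\alpha, g\alpha)$ since $d_X(\alpha, g^k\alpha) \le k\, d_X(\alpha, g\alpha)$ by the triangle inequality and $g$ being an isometry. Applying this with $g = f_m^n$ gives $\lvert f_m^n\rvert \le d^\dagger(\alpha, f_m^n\alpha)$. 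So everything reduces to comparing $d^\dagger(\alpha, f_m^n\alpha)$ with $d^\dagger(\alpha, f^n\alpha)$ for large $m$.

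Next I would fix $n$ and apply Lemma~\ref{lem:distance-continuity} to the sequence $g_m = f_m^n$, which converges to $g = f^n$ in the $C^0$-topology (composition is continuous in the $C^0$-topology, so $f_m^n \to f^n$). The lemma gives, for all $m$ large enough (depending on $n$ and $\alpha$), that $d^\dagger(f_m^n\alpha, f^n\alpha) \le 2$. Combining with the triangle inequality in $\cd(S)$:
\[ d^\dagger(\alpha, f_m^n\alpha) \le d^\dagger(\alpha, f^n\alpha) + d^\dagger(f^n\alpha, f_m^n\alpha) \le d^\dagger(\alpha, f^n\alpha) + 2. \]
Chaining this with the bound $\lvert f_m^n\rvert \le d^\dagger(\alpha, f_m^n\alpha)$ and the identity $\lvert f_m^n\rvert = n\lvert f_m\rvert$ yields exactly
\[ n\lvert f_m\rvert = \lvert f_m^n\rvert \le d^\dagger(\alpha, f^n\alpha) + 2 \]
for all sufficiently large $m$, which is the claim.

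There is really no hard part here; the only thing to be careful about is the order of quantifiers — the threshold on $m$ depends on both $n$ and $\alpha$, but since the lemma is stated for fixed $n$ and fixed $\alpha$ this is exactly what we want, and it is enough for the subsequent use of this lemma (where one will take $n\to\infty$ after first letting $m\to\infty$). One should also note explicitly that $C^0$-convergence $f_m\to f$ implies $C^0$-convergence $f_m^n\to f^n$ for each fixed $n$, since this is where the hypothesis of Lemma~\ref{lem:distance-continuity} is verified; this follows because on a compact surface, $(g,h)\mapsto g\circ h$ is continuous for the $C^0$ (uniform) metric, so an easy induction on $n$ gives the claim.
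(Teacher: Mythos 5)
Your proof is correct and follows exactly the paper's argument: the paper also proves the inequality by applying Lemma~\ref{lem:distance-continuity} with $g_m = f_m^n$ and $g = f^n$, and you have merely unpacked the routine steps (the bound $|g|\le d^\dagger(\alpha,g\alpha)$, the triangle inequality, and the $C^0$-continuity of $h\mapsto h^n$) that the paper leaves implicit.
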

	\begin{proof}
          The first equality is immediate from the definition of asymptotic translation length.
          For the inequality, simply apply Lemma~\ref{lem:distance-continuity}
          taking $g_m = f_m^n$ and $g=f^n$. 	\end{proof}
	As a consequence of this lemma we have 
	\[ \limsup_{m\rightarrow \infty} |f_m| \leq |f| \]
	In particular, if $|f|=0$, we have $\lim_{m\rightarrow\infty} |f_m| = 0 = |f|$ and we are done. 
        Hence, we may from now on assume that $|f| > 0$ and we aim to prove
	that $\liminf_{m\rightarrow\infty} |f_m| \geq |f|$.
	
	Let $\delta$ be a hyperbolicity constant for $\cd(S)$, so that 
        any geodesic triangle in $\cd(S)$ is $\delta$--slim, and any geodesic
        quadrilateral is $2\delta$--slim.

	The first step will be to choose a convenient curve $\alpha$ to serve as a basepoint.
	\begin{lemma}\label{lem:tunnel} 
		There is $\alpha\in\cd(S)$ and $N\in\mathbb{N}$ such that for all $n\geq N$ we have 
		\[ d^\dagger( \alpha, [f^{-n} \alpha, f^n \alpha] ) \leq 2\delta, \]
		and furthermore, for sufficiently large $m$,
		\[ d^\dagger( \alpha, [f_m^{-N} \alpha, f_m^N \alpha] ) \leq 2\delta. \]
	\end{lemma}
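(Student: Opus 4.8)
The plan is to exploit that $|f|>0$ makes $f$ a hyperbolic isometry of the $\delta$-hyperbolic graph $\cd(S)$: I would place $\alpha$ on a quasi-axis of $f$ and then argue that $\alpha$ is uniformly ``trapped between'' $f^{-n}\alpha$ and $f^{n}\alpha$, the two assertions being robustness of this picture as $n\to\infty$ and under $C^{0}$-perturbation.

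For the first assertion, choose $\alpha$ to be a vertex realising the infimal displacement $\inf_{\beta}d^\dagger(\beta, f\beta)$; this infimum is a positive integer (a curve fixed by $f$ would make $f$ elliptic), hence attained. Then the bi-infinite concatenation $\gamma:=\bigcup_{k\in\ZZ}f^{k}\bigl([\alpha, f\alpha]\bigr)$ is a local geodesic whose failure to be geodesic, at the breakpoints $f^{k}\alpha$, is governed by the Gromov product $(f^{-1}\alpha\mid f\alpha)_{\alpha}$. Using on the one hand $d^\dagger(\alpha, f^{n}\alpha)\ge n|f|$ (immediate from subadditivity of $n\mapsto d^\dagger(\alpha, f^{n}\alpha)$ and the definition of $|f|$), and on the other hand the standard fact that the minimal displacement of a hyperbolic isometry exceeds its translation length by at most a bounded multiple of $\delta$, one sees these breakpoints become negligible compared with $|f|$ once $|f|$ is large relative to $\delta$. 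Passing to a fixed power of $f$ (and of each $f_m$) --- harmless for Theorem~\ref{thm:continuity-TL} since it only rescales all translation lengths --- we may arrange this. Then $\gamma$ is a quasi-geodesic with constants controlled by $\delta$, through $\alpha=\gamma(0)$ and through each $f^{\pm n}\alpha$, so by stability of quasi-geodesics the geodesic $[f^{-n}\alpha, f^{n}\alpha]$ stays uniformly close to the sub-path of $\gamma$ between its endpoints, and in particular passes within $2\delta$ of $\alpha$ (enlarging $\delta$ at the outset if needed). This holds for every $n\ge 1$, leaving $N$ free.

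For the second assertion, enlarge $N$ so that $d^\dagger(\alpha, f^{\pm N}\alpha)$, which is at least $N|f|$, greatly exceeds $\delta$; then a nearest point $q$ on $[f^{-N}\alpha, f^{N}\alpha]$ to $\alpha$ satisfies $d^\dagger(\alpha,q)\le 2\delta$, and since $\alpha$ lies deep along the quasi-axis, $q$ is far from both endpoints $f^{\pm N}\alpha$. By Lemma~\ref{lem:distance-continuity} applied to $f_m^{\pm N}\to f^{\pm N}$, for all large $m$ we have $d^\dagger(f_m^{\pm N}\alpha, f^{\pm N}\alpha)\le 2$. Applying $\delta$-slimness to the triangles $(f_m^{-N}\alpha,\ f^{-N}\alpha,\ f^{N}\alpha)$ and $(f_m^{-N}\alpha,\ f^{N}\alpha,\ f_m^{N}\alpha)$, and using that $q$ --- and the intermediate point produced along the way --- is too far from $f^{\pm N}\alpha$ to be $\delta$-close to either of the two sides of length at most $2$, one obtains a point of $[f_m^{-N}\alpha, f_m^{N}\alpha]$ within $2\delta$ of $q$, hence within a bounded multiple of $\delta$ of $\alpha$; choosing the hyperbolicity constant $\delta$ generously from the start makes this $\le 2\delta$.

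The step I expect to be the crux is the first: manufacturing a \emph{single} $\alpha$ and $N$ valid for all $n\ge N$ and, after perturbation, for all large $m$. Since $\cd(S)$ is locally infinite there is no compactness to exploit, so this must come from coarse geometry proper --- construction of a quasi-axis and stability of quasi-geodesics --- together with the bookkeeping that keeps the constant at $2\delta$ (including the mild passage to a power of $f$). By contrast, the transfer from $f$ to the $f_m$ in the second assertion is a routine combination of Lemma~\ref{lem:distance-continuity} with slimness of triangles.
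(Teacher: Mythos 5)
Your overall strategy (place $\alpha$ on a quasi-axis, use quasi-geodesic stability for the first assertion, then slimness plus Lemma~\ref{lem:distance-continuity} for the second) is in the same spirit as the paper's, but your specific realization differs in two ways that create gaps against the stated lemma. First, you take $\alpha$ to be a minimal-displacement point and build the piecewise-geodesic axis $\gamma$ through it. Since $\alpha$ then need not lie on any geodesic $[f^{-n}\alpha,f^n\alpha]$, your first assertion yields at best the fellow-travelling constant $B(\lambda,\epsilon,\delta)$ (which depends on $|f|$), not $2\delta$; and your two-triangle argument for the second assertion stacks two further $\delta$'s on top, giving $\approx B+2\delta$. ``Enlarging $\delta$'' to absorb this is not clean, because the needed enlargement depends on $f$, whereas $\delta$ is a fixed hyperbolicity constant for $\cd(S)$ used throughout Section~\ref{sec:continuous}. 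The paper sidesteps all of this by choosing $\alpha$ to be a closest-point projection of an arbitrary curve $\beta$ onto the long geodesic $[f^{-N}\beta,f^N\beta]$: since $\alpha$ then lies on a geodesic, a single application of $2\delta$-slimness of a quadrilateral to $\alpha$ itself gives exactly $2\delta$, and the fellow-travelling constant $B$ is absorbed into the size of $N$ (the exclusions of the two short sides use lower bounds of the form $N|f|>3B+3\delta$ and $N|f|>2\delta+B+4$) rather than into the distance bound.

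Second, the step where you pass to a power of $f$ does not prove the lemma as stated: replacing $f$ by $f^k$ only controls $[f^{-kn}\alpha,f^{kn}\alpha]$, whereas the lemma claims the bound for every $n\ge N$. It is also unnecessary: you already observed $n|f|\le d^\dagger(\beta,f^n\beta)\le n\,d^\dagger(\beta,f\beta)$, which makes $(f^n\beta)_n$ a $C$-quasi-geodesic for any $|f|>0$, with $C$ depending on $|f|$. The paper invokes quasi-geodesic stability (\cite[Theorem~III.H.1.7]{BH}) directly from this, with no local-to-global principle and hence no constraint that segment lengths exceed a multiple of $\delta$. If you drop the power-of-$f$ step and switch to the projection choice of $\alpha$, your proof collapses to the paper's. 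As written, your plan would still yield the downstream Theorem~\ref{thm:continuity-TL} (the later lemmas only need some finite constant that is eventually divided by $N\to\infty$), but it does not establish Lemma~\ref{lem:tunnel} with the constant $2\delta$ nor for all $n\ge N$.
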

	\begin{proof} 
          Start with any $\beta\in\cd(S)$. Then the sequence $(f^n \beta)_n$ is a
          $C$-quasi-geodesic for some $C>0$.   By  \cite[Theorem III.H.1.7.]{BH} (quasi-geodesics fellow travel geodesics) there exists a constant $B = B(\delta, C)$ such that any geodesic segment $[f^i \beta, f^j\beta]$ lies within Hausdorff distance $B$ of 
          $( f^k \beta
          )_{ i \leq k \leq j}$.

          Take any $N \in \NN$ satisfying 
          \[N > \max \left\{ \frac{3B+3\delta}{|f|}, \frac{2\delta+B+4}{|f|} \right\}  \] and choose
          $\alpha$ to be a closest-point projection of $\beta$ to
          $[f^{-N}\beta,f^N\beta]$. We have that $d^\dagger(\beta,\alpha)\leq
          B$. Fix $n > N$ and consider a geodesic segment $[f^{-n}\alpha,f^n\alpha]$ with $n\geq
          N$. Now we study the geodesic quadrilateral given by the
          aforementioned two geodesics together with $[f^{-N}\beta,
          f^{-n}\alpha]$ and $[f^N\beta, f^n\alpha]$, as indicated in the left
          side of Figure~\ref{fig:33-34figures}.

          \begin{figure}
            \centering
            \includegraphics[width=\textwidth]{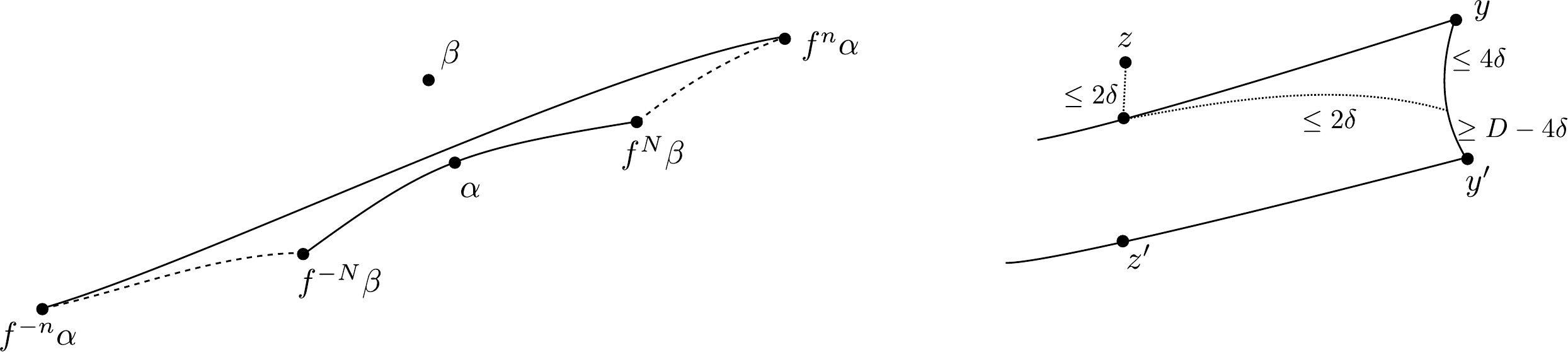}
            \caption{The situation in the proof of Lemma~\ref{lem:tunnel} (left) and~\ref{lem:4delta} (right)}
            \label{fig:33-34figures}
          \end{figure}

Since
          quadrilaterals are $2\delta$--slim, the point $\alpha \in
          [f^{-N}\beta,f^N\beta]$ has distance at most $2\delta$ to
          one of the other three sides.  To prove the lemma we want to
          show that $\alpha$ is not in the $2\delta$-neighborhood of
          $[f^{-N}\beta, f^{-n}\alpha]$ or of $[f^N\beta, f^n\alpha]$.

          So, suppose for contractiction that $\alpha$ is within
          $2\delta$ of $[f^N\beta, f^n\alpha]$ (the other case is
           analogous). Since $d^\dagger(\alpha, \beta) \leq B$, we have
          \[ [f^N\beta, f^n\alpha] \subset N_{B+\delta}([f^N\beta,
          f^n\beta]) \] Therefore, $[f^N\beta, f^n\alpha]$ is in a
          $(2B+\delta)$-neighborhood of $(f^i\beta)_{N\leq i \leq
            n}$. Since $\alpha$ is within $2\delta$ of $[f^N\beta,f^n\alpha]$,
	  this shows that $\beta$ has distance at most
          $3B+3\delta$ to a point in $(f^i\beta)_{N\leq i \leq n}$,
          This contradicts the first lower bound in our choice of $N$, since $ |i|\cdot|f| \leq d^\dagger(f^i\beta,\beta) \leq 3B+3\delta$, but on the other hand we have $N\leq i $ by definition of $i$.
	
          \medskip Now we tackle the last claim of the lemma, using the fact that \[N > \frac{2\delta+B+4}{|f|}.\] Pick
          $\alpha$ as before. Then for sufficiently large $m$ for
          $i\in \{-N,N\}$ we have that $d^\dagger(f_m^i\alpha,f^i\alpha)\leq 2$. We
          consider the geodesic quadrilateral with vertices
          $f_m^{-N}\alpha$, $f^{-N}\beta$, $f^N\beta$, and
          $f_m^N\alpha$. Recall that $\alpha$ lies on the geodesic
          $[f^{-N}\beta,f^N\beta]$. As before, if
          $\alpha$ is within $2\delta$ of
          $[f^N\beta,f_m^N\alpha]$, then $\alpha$ is within $2\delta +
          B+4$ of $f^N\alpha$, which is a contradiction.
	\end{proof}

	\begin{lemma}\label{lem:4delta}
          If $N$ is chosen large enough, and $m$ is sufficiently large (depending on $N$), then for all $n \in \NN$ and
          $i$ with $-n\leq i \leq n$, we have that $(f_m^N)^i\alpha$ is
          within $4\delta$ of $[(f_m^N)^{-n}\alpha,(f_m^N)^n\alpha]$.
	\end{lemma}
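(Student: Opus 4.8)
The plan is to iterate the "tunnel" estimate from Lemma~\ref{lem:tunnel} along the orbit of $\alpha$ under $g := f_m^N$, combining it with hyperbolicity of $\cd(S)$. Set $g := f_m^N$ for the fixed large $N$ coming from Lemma~\ref{lem:tunnel}, and observe that the last conclusion of Lemma~\ref{lem:tunnel}, being a statement about the conjugate $f_m$ which we may freely conjugate by $g^k$, gives for every $k \in \ZZ$ that $d^\dagger(g^k\alpha, [g^{k-1}\alpha, g^{k+1}\alpha]) \leq 2\delta$ once $m$ is large enough. (Here we use that $|f_m|$ is close to $|f|$, or more simply that the proof of Lemma~\ref{lem:tunnel} only used that $f_m^{\pm N}$ move $\alpha$ nearly as far as $f^{\pm N}$ does, a $g$-equivariant condition.) So the sequence $(g^i\alpha)_{i\in\ZZ}$ is a "local quasigeodesic" in the sense that each term is $2\delta$-close to the geodesic joining its two neighbours.

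First I would make precise the following claim: there is a constant $R = R(\delta)$ (one can take $R = 4\delta$, or a small explicit multiple of $\delta$) such that any bi-infinite sequence of points in a $\delta$-hyperbolic geodesic space in which every term lies within $2\delta$ of the geodesic between its two neighbours has the property that each term $g^i\alpha$ with $-n \leq i \leq n$ lies within $R$ of any geodesic $[g^{-n}\alpha, g^n\alpha]$. This is a standard "local-to-global" fact for local quasigeodesics in hyperbolic spaces, but since the spacing here is uniform ($d^\dagger(g^i\alpha,g^{i+1}\alpha)$ is bounded above and below, as $g$ acts hyperbolically once $|f_m|>0$) I would prove it directly: build a path $P$ through $g^{-n}\alpha, g^{-n+1}\alpha, \dots, g^n\alpha$ by concatenating the geodesics $[g^j\alpha, g^{j+1}\alpha]$, and show $P$ stays within bounded distance of a geodesic $[g^{-n}\alpha, g^n\alpha]$ by a telescoping argument: inductively, $[g^{-n}\alpha, g^j\alpha]$ stays $O(\delta)$-close to the "staircase" through the first $j$ orbit points, using $\delta$-slimness of the triangle on $g^{-n}\alpha, g^{j-1}\alpha, g^j\alpha$ together with the hypothesis that $g^{j-1}\alpha$ (or $g^j\alpha$) is $2\delta$-close to $[g^{j-2}\alpha, g^j\alpha]$ and so on. Tracking constants carefully gives the $4\delta$ bound; even if a slightly larger constant falls out, one can just rename it, since the next lemma presumably only needs "some linear-in-$\delta$ constant."

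The one genuine subtlety, and the step I expect to be the main obstacle, is controlling the accumulated error so that it does not grow with $n$. A naive induction would give an error like $n\cdot\delta$, which is useless. The resolution is the classical trick: rather than building the geodesic from one endpoint, one uses that a $k$-local $(\lambda,\epsilon)$-quasigeodesic with $k$ large compared to $\delta,\lambda,\epsilon$ is globally a quasigeodesic (see \cite[Chapter III.H, Theorem~1.13]{BH}), hence fellow-travels a geodesic with a constant depending only on $\delta,\lambda,\epsilon$ and not on length. Here the local quasigeodesic constant is already global — each step has comparable length and each vertex is $2\delta$-close to the geodesic spanning a neighbourhood of size $3$ — so applying this theorem to the concatenated path $P$ yields that $P$ is a $(\lambda,\epsilon)$-quasigeodesic with $\lambda,\epsilon$ depending only on $\delta$ and on upper/lower bounds for $d^\dagger(g^i\alpha, g^{i+1}\alpha)$; fellow-travelling then bounds $d^\dagger(g^i\alpha, [g^{-n}\alpha, g^n\alpha])$ by a constant independent of $n$, which we absorb into $4\delta$ after possibly enlarging $N$. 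I would remark that the hypothesis "$N$ chosen large enough" is exactly what guarantees, via Lemma~\ref{lem:tunnel}, that the local constant is good enough (i.e.\ that $3$ steps of the orbit already $2\delta$-shadow the connecting geodesic), which is the input that makes the local-to-global principle applicable.
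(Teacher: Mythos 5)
Your proposal is correct (after straightening out the constant) but takes a genuinely different route from the paper. The paper's proof is elementary and self-contained: it picks the index $i$ with $-n\leq i \leq n$ maximizing $D \coloneq d^\dagger\bigl((f_m^N)^i\alpha, [(f_m^N)^{-n}\alpha,(f_m^N)^n\alpha]\bigr)$, sets $x,z,y$ to be the $(i-1)$st, $i$th, $(i+1)$st orbit points and $x',y'$ their projections to the long geodesic, and considers the quadrilateral on $x,y,y',x'$. Since $z$ is $2\delta$-close to $[x,y]$ by the tunnel lemma, $2\delta$-slimness of quadrilaterals forces a point $\overline z$ near $z$ to lie $2\delta$-close to one of the three remaining sides; if it were close to $[y,y']$ (or symmetrically $[x,x']$), maximality of $D$ forces $d^\dagger(z,y)\leq 8\delta$, i.e.\ $d^\dagger(\alpha,f^N\alpha)\leq 8\delta + 2$, contradicting $N > (8\delta+2)/|f|$. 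So $\overline z$ is $2\delta$-close to $[x',y']$, and the $4\delta$ bound follows directly. Your proposal instead reads the tunnel lemma (conjugated by powers of $g = f_m^N$, as you correctly note) as saying that the orbit sequence is a local quasi-geodesic, and then invokes the local-to-global theorem for quasi-geodesics in hyperbolic spaces (\cite[Theorem~III.H.1.13]{BH}), using the fact that choosing $N$ large makes the step size exceed the required local scale. This is a valid and perhaps more conceptual organization, but comes at a cost: the initial unquantified claim that $R(\delta)$ works for \emph{any} sequence where each term is $2\delta$-close to the geodesic between its neighbours is false without a lower bound on the step size tied to $\delta$ (you acknowledge this later, but the claim as first stated is misleading), and the constant that pops out of the local-to-global theorem and the Morse lemma is not $4\delta$ and is generally much less explicit. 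You are right that the downstream argument only needs \emph{some} uniform constant, so the renaming is harmless; still, the paper's argument is shorter, uses only $\delta$-slimness, and produces the sharp $4\delta$. The one genuine merit of your route is that the monotonicity-of-projections observation recorded in Remark~\ref{rem:4delta} (also used later) comes for free from quasi-geodesicity, whereas in the paper it has to be extracted as a further byproduct of the quadrilateral argument.
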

	
	\begin{proof} 
          Here, we require in addition to the constraints of the
          previous proof that $N>\frac{8\delta+2}{|f|}.$

          Given $n \in\NN$, fix a geodesic segment $[(f_m^N)^{-n}\alpha,(f_m^N)^n\alpha]$ and let $i$ be an index with $-n\leq i \leq n$ such that 
          that $z \coloneq (f_m^N)^i\alpha$ maximises distance to
          $[(f_m^N)^{-n}\alpha,(f_m^N)^n\alpha]$.  Let $D$ denote the distance from $z$ to $[(f_m^N)^{-n}\alpha,(f_m^N)^n\alpha]$.

          Let $x\coloneq (f_m^N)^{i-1}\alpha$ and let $y \coloneq
          (f_m^N)^{i+1}\alpha$. Let $z',x'$ and $y'$ be choices of 
          closest-point projections of $z,x$ and $y$, respectively, to
          $[(f_m^N)^{-n}\alpha,(f_m^N)^n\alpha]$.  
	  Finally, let $\overline{z}$ be a closest point projection of $z$ to $[x,y]$, and recall that 
          $z$ is within $2\delta$ of $[x,y]$ by
          Lemma~\ref{lem:tunnel}. 
          
          Now consider the geodesic quadrilateral $[x,y]$, $[y,y']$,
          $[y',x']$, $[x',x]$.  By $2\delta$--slimness,
          $\overline{z}\in[x,y]$ is $2\delta$--close to one of the
          other three sides. To finish the proof, what we need to show is that the distance from $\overline{z}$  to $[x', y']$ is at most
          $2\delta$.  Similarly to the previous lemma, we proceed by using a proof by contradiction to show that $\overline{z}$  cannot be close to one of the other two sides.  

          So, for contradiction, assume that $\overline{z}$ is $2\delta$--close to
          a point $\overline{y}\in[y,y']$ (the case of $[x,x']$ is analogous).  The set-up is illustrated in the right side of Figure~\ref{fig:33-34figures}.

          We then have
          \[ D = d(z, [(f_m^N)^{-n}\alpha,(f_m^N)^n\alpha]) \leq 4\delta + d(\overline{y},  [(f_m^N)^{-n}\alpha,(f_m^N)^n\alpha])), \]
          and therefore
          \[ d^\dagger(\overline{y},  [(f_m^N)^{-n}\alpha,(f_m^N)^n\alpha])) \geq D - 4\delta. \]
          On the other hand, because our choice of $z$ was a point that maximised distance to $[(f_m^N)^{-n}\alpha,(f_m^N)^n\alpha]$, we have that
          \[ d^\dagger(y, [(f_m^N)^{-n}\alpha,(f_m^N)^n\alpha])) \leq D, \]
          hence
          \[ d^\dagger(\overline{y}, y) \leq 4\delta. \]
          Thus we obtain in total that
          \[ d^\dagger((f^N_m)^i\alpha, (f^N_m)^{i+1}\alpha) = d(z, y) \leq 8\delta, \]
          hence
          \[ d^\dagger(\alpha, f^N\alpha) \leq 8\delta + 2. \]
          This contradicts $N>\frac{8\delta+2}{|f|}$, completing the proof.          
                  \end{proof}
	
	\begin{remark} \label{rem:4delta}
        Observe that the proof of Lemma \ref{lem:4delta} also shows that
        for any large enough $N$, the closest-point projections of the
        $(f_m^N)^i\alpha$ for $-n\leq i \leq n$ on the geodesic
        $[(f_m^N)^{-n}\alpha,(f_m^N)^n\alpha]$ are monotonic. 
        \end{remark} 
	
	\begin{lemma}  For all sufficiently large $N$, and sufficiently large $m$ (depending on $N$),  for all $n\in\mathbb{N}$ we have
		\[2n(|f^N|-2-8\delta) \leq d^\dagger((f_m^N)^{-n}\alpha, (f_m^N)^n\alpha).\] 
		Therefore, $|f_m| \geq |f|-\frac{2+8\delta}{N}$.
	\end{lemma}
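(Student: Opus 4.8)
The plan is to assemble the previous lemmas into a single lower bound on $d^\dagger((f_m^N)^{-n}\alpha, (f_m^N)^n\alpha)$. First I would fix $N$ large enough that all the constraints from Lemma~\ref{lem:4delta} (hence also those of Lemma~\ref{lem:tunnel}) are met, and then fix $m$ large enough that both Lemma~\ref{lem:4delta} applies and, by Lemma~\ref{lem:distance-continuity} applied to $g_m = f_m^N \to f^N$, we have $d^\dagger(f_m^N\alpha, f^N\alpha) \le 2$. For a fixed $n$, choose a geodesic $\sigma = [(f_m^N)^{-n}\alpha, (f_m^N)^n\alpha]$ and, for $-n \le i \le n$, let $p_i$ be a closest-point projection of $(f_m^N)^i\alpha$ onto $\sigma$, chosen (using Remark~\ref{rem:4delta}) so that the $p_i$ occur in monotone order along $\sigma$; note that $p_{-n}$ and $p_n$ are the endpoints of $\sigma$. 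By Lemma~\ref{lem:4delta}, $d^\dagger((f_m^N)^i\alpha, p_i) \le 4\delta$ for every $i$.

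Next I would bound the step sizes along $\sigma$. For each $-n \le i \le n-1$, the triangle inequality gives $d^\dagger(p_i, p_{i+1}) \ge d^\dagger((f_m^N)^i\alpha, (f_m^N)^{i+1}\alpha) - 8\delta$. Since $(f_m^N)^{-i}$ is an isometry of $\cd(S)$, the right-hand distance equals $d^\dagger(\alpha, f_m^N\alpha)$; and by the triangle inequality together with $d^\dagger(f_m^N\alpha, f^N\alpha) \le 2$ and the standard fact $|g|_X \le d_X(x, gx)$ applied to $g = f^N$, we get $d^\dagger(\alpha, f_m^N\alpha) \ge d^\dagger(\alpha, f^N\alpha) - 2 \ge |f^N| - 2$. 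Hence $d^\dagger(p_i, p_{i+1}) \ge |f^N| - 2 - 8\delta$ for every $i$.

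Finally, monotonicity of the $p_i$ along $\sigma$ yields the exact identity $d^\dagger((f_m^N)^{-n}\alpha, (f_m^N)^n\alpha) = \sum_{i=-n}^{n-1} d^\dagger(p_i, p_{i+1})$, and combining with the previous paragraph gives $d^\dagger((f_m^N)^{-n}\alpha, (f_m^N)^n\alpha) \ge 2n(|f^N| - 2 - 8\delta)$, which is the displayed inequality. Dividing by $2n$, using $d^\dagger((f_m^N)^{-n}\alpha, (f_m^N)^n\alpha) = d^\dagger(\alpha, (f_m^N)^{2n}\alpha)$, and letting $n \to \infty$ produces $N|f_m| = |f_m^N| \ge |f^N| - 2 - 8\delta = N|f| - 2 - 8\delta$, i.e. $|f_m| \ge |f| - \tfrac{2+8\delta}{N}$. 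The one point requiring care is that we need the \emph{identity} above — rather than merely a triangle-inequality upper bound — and this is precisely where monotonicity of the projections, supplied by Remark~\ref{rem:4delta}, is essential; beyond this the argument is routine bookkeeping and I do not anticipate any genuine obstacle.
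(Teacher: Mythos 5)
Your proof is correct and follows essentially the same approach as the paper: project the orbit points $(f_m^N)^i\alpha$ onto the geodesic, use monotonicity of projections from Remark~\ref{rem:4delta} to decompose the geodesic into consecutive pieces, bound each piece from below via Lemma~\ref{lem:4delta} and the estimate $d^\dagger(\alpha, f_m^N\alpha) \ge |f^N| - 2$, and pass to the limit. The only cosmetic difference is that you derive the bound $d^\dagger(\alpha, f_m^N\alpha) \ge |f^N| - 2$ directly from Lemma~\ref{lem:distance-continuity} plus the standard inequality $|g| \le d(x,gx)$, whereas the paper cites Lemma~\ref{lem:transl_length}, which encapsulates the same facts.
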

	
	\begin{proof} 
          Divide the segments of a geodesic
          $[(f_m^N)^{-n}\alpha,(f_m^N)^n\alpha]$ into $2n$ disjoint pieces
          by taking  closest-point projections of the points $(f_m^N)^i\alpha$
          to the geodesic (by the monotonicity of projections guaranteed by Remark~\ref{rem:4delta}).  Each
          piece has length at least $|f^N|-2-8\delta$ by
          Lemma~\ref{lem:4delta} and Lemma~\ref{lem:transl_length}
          so we obtain the first
          inequality. The second inequality follows by rearranging the
          first inequality and taking the limit of this quantity over $n$ as $n$ tends to $\infty$.
        \end{proof}
	
        As a consequence of the above lemma, we obtain
        \[ \liminf_{m\rightarrow\infty} |f_m| \geq |f|-\frac{2+8\delta}{N} \]
        for all sufficiently large $N$. This shows that $\liminf_{m\rightarrow\infty} |f_m| \geq |f|$, and
        Theorem~\ref{thm:continuity-TL} follows. \qed

	\section{Distance Estimates} \label{sec:estimates}
	In this section, we provide key results which allow us to connect the geometry of $\cd(S)$ to the topology of curves on $S$ and their lifts to specific covers.  
			
         Our first criterion, Lemma \ref{lem:cover-distance-2} below, works for all surfaces $S$ of genus $g
         \geq 2$, and uses covering spaces. One can think of this as a
         $\cd(S)$--version of the criterion introduced by Hempel in
         \cite[Section 2]{Hempel}.  
 To state it, we need to introduce some vocabulary and basic observations.   If $\pi\colon S' \to S$ is a (possibly branched)
         cover, and $\alpha \subset S$ is an essential simple closed curve (disjoint from the branch points), then we say that 
         an \emph{elevation} of $\alpha$ is a connected component of
         $\pi^{-1}(\alpha)$. 
         The following two properties are obvious, yet important:
        \begin{enumerate}
        \item Elevations of simple closed curves are simple. If the
          cover has finite degree, an elevation of such a curve is also closed.  
        \item Any two elevations of disjoint simple closed curves are disjoint.
        \end{enumerate}
        
	\begin{lemma}\label{lem:cover-distance-2}
          Let $S$ be a surface of genus $g \geq 2$. If $\alpha$ and $\beta$
          are two curves with $d^\dagger(\alpha, \beta) = 2$, then
          there is a degree $2$ cover $X \to S$ such that $\alpha$ and $\beta$ admit disjoint elevations in $X$.
	\end{lemma}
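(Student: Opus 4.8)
The plan is to build a connected (unbranched) double cover of $S$ directly, governed by a single homological condition that forces the elevations of $\alpha$ and $\beta$ to match up disjointly. Throughout I assume, as we may, that $\alpha$ and $\beta$ are in transverse position meeting in finitely many points $\alpha\cap\beta=\{p_1,\dots,p_k\}$, with $k\geq 1$ since $d^\dagger(\alpha,\beta)=2>1$. Because $d^\dagger(\alpha,\beta)=2$, there is a vertex $\gamma$ of $\cd(S)$ adjacent to both $\alpha$ and $\beta$, i.e.\ an essential simple closed curve disjoint from $\alpha\cup\beta$. Let $V\leq H_1(S;\mathbb{Z}/2\mathbb{Z})$ be the image of $H_1(\alpha\cup\beta;\mathbb{Z}/2\mathbb{Z})$ under inclusion. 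The first step is to find a \emph{nonzero} homomorphism $\phi\colon \pi_1(S)\to\mathbb{Z}/2\mathbb{Z}$ (equivalently, a nonzero class in $H^1(S;\mathbb{Z}/2\mathbb{Z})$) that vanishes on all of $V$; the corresponding index-two subgroup then defines a connected double cover $p\colon X\to S$, and I claim this $X$ works.

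For the first step I would show $V\neq H_1(S;\mathbb{Z}/2\mathbb{Z})$, which guarantees such a $\phi$ exists. Since $\alpha\cup\beta\subset S\setminus\gamma$, the subspace $V$ is contained in the image of $H_1(S\setminus\gamma;\mathbb{Z}/2\mathbb{Z})\to H_1(S;\mathbb{Z}/2\mathbb{Z})$, and the mod-two intersection pairing with $[\gamma]$ vanishes on that image. If $[\gamma]\neq 0$ this already gives $V\subseteq[\gamma]^{\perp}\subsetneq H_1(S;\mathbb{Z}/2\mathbb{Z})$ by nondegeneracy of the pairing. If $[\gamma]=0$, then $\gamma$ separates $S$ into two subsurfaces; as $\alpha$ and $\beta$ are disjoint from $\gamma$ and meet each other, $\alpha\cup\beta$ lies in one of them, say $S_1$, and the other piece $S_2$ has a single boundary circle and is not a disk (as $\gamma$ is essential), so $g(S_2)\geq 1$. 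Then $V\subseteq\operatorname{im}(H_1(S_1;\mathbb{Z}/2\mathbb{Z})\to H_1(S;\mathbb{Z}/2\mathbb{Z}))$, whose dimension is at most $2g(S_1)\leq 2g(S)-2<\dim H_1(S;\mathbb{Z}/2\mathbb{Z})$. Either way the required $\phi$ exists.

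Next I would verify that $\alpha$ and $\beta$ have disjoint elevations in $X$. Since $\phi$ kills $[\alpha]$ and $[\beta]$, both curves lift: write $p^{-1}(\alpha)=\alpha_0\sqcup\alpha_1$ and $p^{-1}(\beta)=\beta_0\sqcup\beta_1$, each a disjoint pair of simple closed curves, and let $\sigma$ be the deck involution. For each $p_m$, its two preimages are interchanged by $\sigma$, and one of them lies in $\alpha_{a_m}\cap\beta_{b_m}$ while the other lies in $\alpha_{a_m+1}\cap\beta_{b_m+1}$; set $\epsilon_m:=a_m+b_m\in\mathbb{Z}/2\mathbb{Z}$ (independent of which preimage is chosen, and flipped globally if we relabel the $\alpha_i$ or the $\beta_j$). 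The key claim is that all $\epsilon_m$ coincide. Given $m,m'$, pick an arc $\rho\subset\alpha$ and an arc $\rho'\subset\beta$ from $p_m$ to $p_{m'}$; then $\ell:=\rho\cup\bar\rho'$ is a loop contained in $\alpha\cup\beta$, so $\phi([\ell])=0$ and $\ell$ lifts to a loop in $X$. Lifting $\ell$ from a preimage of $p_m$ and tracking which component of $p^{-1}(\alpha)$ and then of $p^{-1}(\beta)$ the lift runs through shows that the lift closes up precisely when $a_m+b_m=a_{m'}+b_{m'}$, i.e.\ $\epsilon_m=\epsilon_{m'}$. Hence all $\epsilon_m$ equal a common value $e$, and then no preimage of any point of $\alpha\cap\beta$ lies in $\alpha_0\cap\beta_{1-e}$ (it would force some $\epsilon_m=1-e\neq e$). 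Therefore $\alpha_0\cap\beta_{1-e}=\emptyset$, giving the desired disjoint elevations.

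The part deserving the most care, which I regard as the main obstacle, is the bookkeeping in the last paragraph: making precise that a loop in $\alpha\cup\beta$ lifting to a loop forces the local parities $\epsilon_m$ to agree. This is a finite trace of the lift of $\ell$ through the two-sheeted cover, but it is exactly where one uses that $\phi$ vanishes on all of $V$ and not merely on $[\alpha]$ and $[\beta]$ — killing $[\alpha],[\beta]$ alone is genuinely insufficient once $i(\alpha,\beta)\geq 2$. A minor secondary point is the reduction to transverse, finite position for $\alpha$ and $\beta$, which I would address at the very start; note also that the construction above stays within unbranched covers, so no branching is needed for this particular criterion.
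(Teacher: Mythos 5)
Your argument is correct, but it follows a genuinely different route from the paper's. Both start from an essential curve $\gamma$ disjoint from $\alpha\cup\beta$, but the paper immediately arranges $\gamma$ to be nonseparating: since $\alpha$ and $\beta$ intersect, if $\gamma$ separated then $\alpha\cup\beta$ would lie in one complementary component, and any nonseparating curve in the other component is still disjoint from both. The paper then takes $\phi$ to be mod-$2$ intersection with $[\gamma]$. The payoff is that disjointness of elevations becomes an immediate global observation with no bookkeeping: $X$ minus the preimage of $\gamma$ consists of two disjoint copies of $S\setminus\gamma$, each mapping homeomorphically down, so one lifts $\alpha$ into one copy and $\beta$ into the other. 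Your version chooses any nonzero $\phi$ annihilating $V=\operatorname{im}(H_1(\alpha\cup\beta;\ZZ/2\ZZ)\to H_1(S;\ZZ/2\ZZ))$, which is more flexible but requires the dimension count to see $V$ is proper, and then the hand-done parity trace over $\alpha\cap\beta$ to verify disjointness. Note that the paper's $\phi$ automatically annihilates $V$ since every loop in $\alpha\cup\beta$ misses $\gamma$, so your key observation (that killing $[\alpha]$ and $[\beta]$ alone is insufficient) is built into the paper's construction from the outset, and the two-components argument is in effect a structural repackaging of your parity computation. One small caution: you reduce to transverse finite intersection at the start, but vertices of $\cd(S)$ are specific curves rather than isotopy classes, so that reduction is not actually available; fortunately it is also not needed, since your parity $\epsilon_q$ makes sense for every $q\in\alpha\cap\beta$ and your loop-lifting argument shows it is constant without any finiteness hypothesis.
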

	\begin{proof}
          Since $d^\dagger(\alpha, \beta) = 2$, there is a curve
          $\gamma$ which is disjoint from both $\alpha$ and $\beta$.  
          The following argument shows that we 
          may take such a curve to be non-separating.  Since $d^\dagger(\alpha, \beta) \neq 1$, the curves intersect, and so if 
          $\gamma$ were separating, then
          $\alpha, \beta$ would be contained in the same complementary
          component of $\gamma$.  Thus, we could replace $\gamma$ with any non-separating curve in the other
          complementary component.
		
          Taking the mod-2 intersection number with $\gamma$ gives a homomorphism $\pi_1(S) \to \ZZ/2\ZZ$; 
          let $X$ be the associated degree $2$ cover. We claim that $X$ has
          the desired property, and in fact $\alpha, \beta$ even admit 
          disjoint lifts to $X$.  Let $\gamma_1$ and $\gamma_2$ denote the two elevations of $\gamma$ to $X$.  Then 
          \[ X - (\gamma_1\cup\gamma_2) = X_1 \cup X_2 \] 
          where 
          $X_1$ and $X_2$ are the two preimages of $S-\gamma$.  Now, the
          desired elevations can be obtained by lifting of $\alpha$
          into $X_1$, and $\beta$ into $X_2$.
	\end{proof}
        Iterating Lemma~\ref{lem:cover-distance-2} immediately yields the following.
	\begin{corollary}\label{cor:cover-distance-bound}
          If $S$ is a surface of genus $g \geq 2$ and $\alpha, \beta \in \cd(S)$ with $d^\dagger(\alpha, \beta)\leq n$, then there is a cover of $S$
          of degree at most $2^n$ to which $\alpha, \beta$ admit
          disjoint elevations.
	\end{corollary}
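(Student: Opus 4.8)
The plan is to prove Corollary~\ref{cor:cover-distance-bound} by induction on $n$, using Lemma~\ref{lem:cover-distance-2} to trade two units of distance for a single degree-$2$ cover. The one ingredient I need beyond the lemma is the following elementary observation about elevations and distance: if $p\colon X\to S$ is a finite cover and $\gamma,\beta$ are curves on $S$ with $d^\dagger(\gamma,\beta)\le k$, then any elevation $\tilde\gamma$ of $\gamma$ to $X$ lies within distance $k$ in $\cd(X)$ of some elevation of $\beta$. To see this, fix a geodesic $\gamma=\delta_0,\delta_1,\dots,\delta_k=\beta$ in $\cd(S)$ and build elevations $\tilde\delta_i$ of $\delta_i$ inductively, starting from $\tilde\delta_0=\tilde\gamma$: given $\tilde\delta_i$, let $\tilde\delta_{i+1}$ be any elevation of $\delta_{i+1}$. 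Then $\tilde\delta_{i+1}$ is a simple closed curve (an elevation of a simple closed curve in a finite cover), it is essential in $X$ (its image in $S$ traverses the essential curve $\delta_{i+1}$ a nonzero number of times, and powers of nontrivial elements of the torsion-free group $\pi_1(S)$ are nontrivial), and it is disjoint from $\tilde\delta_i$ (elevations of the disjoint curves $\delta_i$ and $\delta_{i+1}$ are disjoint). Hence $\tilde\delta_{i+1}$ is a vertex of $\cd(X)$ at distance at most $1$ from $\tilde\delta_i$, so $d^\dagger_X(\tilde\gamma,\tilde\delta_{i+1})\le i+1$; the case $i=k$ is the assertion.

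For the main induction I may assume $\alpha\ne\beta$, as there is otherwise nothing to prove. If $d^\dagger(\alpha,\beta)\le 1$ then $\alpha$ and $\beta$ are disjoint, so the identity cover (degree $1\le 2^n$) already carries the required disjoint elevations; this is the base case. For the inductive step let $d^\dagger(\alpha,\beta)\le n$ with $n\ge 2$, and assume $d^\dagger(\alpha,\beta)\ge 2$ (otherwise the base case applies). Choose a vertex $\gamma$ on a geodesic from $\alpha$ to $\beta$ with $d^\dagger(\alpha,\gamma)=2$ and $d^\dagger(\gamma,\beta)\le n-2$. By Lemma~\ref{lem:cover-distance-2} there is a degree-$2$ cover $p\colon X\to S$ with disjoint lifts $\tilde\alpha$ of $\alpha$ and $\tilde\gamma$ of $\gamma$; note $X$ is again a surface of genus $\ge 2$, since $\chi(X)=2\chi(S)<0$. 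Applying the observation above to $\gamma,\beta$ and the elevation $\tilde\gamma$ yields an elevation $\hat\beta$ of $\beta$ in $X$ with $d^\dagger_X(\tilde\gamma,\hat\beta)\le n-2$, hence $d^\dagger_X(\tilde\alpha,\hat\beta)\le 1+(n-2)=n-1$. Since $\tilde\alpha\ne\hat\beta$ (they project to the distinct curves $\alpha$ and $\beta$), the induction hypothesis applied to $\tilde\alpha,\hat\beta$ in $X$ provides a cover $q\colon Y\to X$ of degree $\le 2^{\,n-1}$ to which $\tilde\alpha$ and $\hat\beta$ admit disjoint elevations. Then $p\circ q\colon Y\to S$ is a cover of degree $\le 2\cdot 2^{\,n-1}=2^{n}$, and an elevation of $\tilde\alpha$ (resp.\ of $\hat\beta$) in $Y$ is an elevation of $\alpha$ (resp.\ of $\beta$); these are disjoint, which closes the induction.

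The argument is almost purely bookkeeping — the triangle inequality in $\cd(X)$ and multiplicativity of covering degrees — and the only step requiring genuine thought, hence the main obstacle, is the preliminary observation: specifically, checking that the curves produced along the geodesic are essential, so that they really are vertices of $\cd(X)$. This is where one uses that $S$ is closed (so ``essential'' just means ``non-null-homotopic'') together with the fact that a nonzero power of an essential curve remains essential in a surface group. I also note that the same induction in fact gives the sharper bound $2^{\,d^\dagger(\alpha,\beta)-1}$ on the degree, but $2^n$ is all that is needed here.
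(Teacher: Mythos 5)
Your proof is correct and is exactly what the paper means by ``iterating Lemma~\ref{lem:cover-distance-2}'': you make the iteration precise as an induction on $n$, using the paper's preceding observations that elevations of simple curves are simple and that elevations of disjoint curves are disjoint (and you helpfully add the essentiality check, which the paper leaves implicit) to lift the tail of a geodesic through each degree-$2$ cover. No gaps; the sharper bound $2^{d^\dagger(\alpha,\beta)-1}$ you note also follows from the same recursion.
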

	As a consequence, we have the following criterion that we will frequently apply later.
	\begin{lemma}\label{lem:small-diameter-one-cover}
		Let $S$ be a surface of genus $g \geq 2$ and let $K\geq 0$. There
		is a finite-sheeted cover $X \to S$ (depending only on $K$) such that 
		any two curves $\alpha$ and $\beta$ on $S$ satisfying $d^\dagger(\alpha,\beta)\leq K$ admit disjoint elevations to $X$.
	\end{lemma}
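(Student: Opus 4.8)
The plan is to promote Corollary~\ref{cor:cover-distance-bound}, which yields a cover depending on the given pair of curves, to a single cover that works for all pairs simultaneously, using the fact that a surface group has only finitely many subgroups of bounded index.

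First I would set $n = \lfloor K\rfloor$; since distances between vertices of $\cd(S)$ are integers, $d^\dagger(\alpha,\beta)\le K$ is equivalent to $d^\dagger(\alpha,\beta)\le n$, and Corollary~\ref{cor:cover-distance-bound} then gives, for each such pair, a connected finite cover of $S$ of degree at most $2^n$ to which $\alpha$ and $\beta$ admit disjoint elevations; this cover corresponds to a subgroup of $\pi_1(S)$ of index at most $2^n$. Next I would use that $\pi_1(S)$ is finitely generated, hence has only finitely many subgroups of each index $\le 2^n$ (an index-$m$ subgroup is the stabiliser of a point for a transitive $\pi_1(S)$-action on an $m$-element set, and there are finitely many such actions once a finite generating set is fixed). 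Let $H\le\pi_1(S)$ be the intersection of all of these; it is a finite intersection of finite-index subgroups, hence of finite index, and is contained in every subgroup of index $\le 2^n$. Take $X\to S$ to be the associated finite cover. By construction $X$ depends only on $n$, hence only on $K$, and the covering $X\to S$ factors through every connected finite cover of $S$ of degree at most $2^n$.

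It then remains to track elevations through these factorisations. Given $\alpha,\beta$ with $d^\dagger(\alpha,\beta)\le K$, I would choose as above a cover $p\colon Y\to S$ of degree at most $2^n$ together with disjoint elevations $\alpha_Y,\beta_Y\subset Y$, and factor $X\to S$ as $X\xrightarrow{q}Y\xrightarrow{p}S$. Any connected component of $q^{-1}(\alpha_Y)$ is a connected component of $(p\circ q)^{-1}(\alpha)$, hence an elevation of $\alpha$ to $X$, and likewise for $\beta$; and these two components are disjoint because $\alpha_Y$ and $\beta_Y$ are disjoint in $Y$, by the observation that elevations of disjoint simple closed curves are disjoint, applied to $q$. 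Finally, since $X\to S$ is a finite cover of a surface of positive genus, such elevations are essential simple closed curves (a power of an essential curve remains nontrivial in $\pi_1$), so they are genuine vertices of $\cd(X)$, which proves Lemma~\ref{lem:small-diameter-one-cover}.

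I do not expect a serious obstacle: the one real point is that finiteness of the collection of bounded-index subgroups of a finitely generated group lets us replace the pair-dependent covers of Corollary~\ref{cor:cover-distance-bound} by a single dominating cover; the remainder is the routine covering-space bookkeeping already packaged into the elementary properties of elevations recorded above.
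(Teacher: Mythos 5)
Your proposal is correct and matches the paper's argument: both intersect the finitely many subgroups of $\pi_1(S)$ of index at most $2^{\lfloor K\rfloor}$ (using Corollary~\ref{cor:cover-distance-bound}) to obtain a single dominating cover, then note that disjoint elevations remain disjoint in further covers. Your handling of the integrality of $d^\dagger$ via $n=\lfloor K\rfloor$ and the explicit check that elevations are essential are minor tidiness points; the substance is identical.
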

	\begin{proof}
                
		Let $\Gamma < \pi_1(S)$ be the intersection
		of the finite index subgroups of degree at most $2^K$ in $\pi_1(S)$.  
		It is a well-known fact that a finitely generated group has
		only finitely many subgroups a given index (e.g. since an index $n$ subgroup $H$ of $G$ is determined by the
		action of $G$ on the cosets of $H$, and there are only finitely many homomorphisms of a finitely generated group to the symmetric group of $n$ elements).  Thus, $\Gamma$ also has finite index in $\pi_1(S)$.    
		By covering space theory, the subgroup $\Gamma$ determines a finite cover $X \to S$ of degree equal to the index of $\Gamma$.  
		We claim this cover has the desired property.  Indeed, by Corollary~\ref{cor:cover-distance-bound}, any pair of curves $\alpha$ and $\beta$ satisfying $d^\dagger(\alpha,\beta)\leq K$ will admit disjoint elevations to some cover $X'$, where $X' \to S$ has degree at most $2^K$. By definition of $\Gamma$, we have $\Gamma < \pi_1(X') < \pi_1(S)$, and hence $X\to S$ factors through $X \to X' \to S$. Disjoint elevations stay disjoint in further covers, hence $\alpha$ and $\beta$ have disjoint elevations to $X$ as required.  
	\end{proof}

	\subsection{A crossing number estimate}
	We also develop an upper bound on distance in $\cd(T)$ (Lemma~\ref{lem:crossing-estimate}, below) 
	specifically for the torus.  This will be used in our work on rotation sets in the next section.  
   
	\begin{definition}
          Let $\alpha$ and $\beta$ be essential simple closed curves on the torus
          $T=S^1\times S^1$. Interpret $\beta$ as a map $\beta \colon [0,1] \to T$
          and denote by $\widetilde{\beta}\colon [0,1]\to\tilde{T}=\RR^2$ a lift.

          We define the {\em $\alpha$-crossing number}
          $C_\alpha(\beta)$ as the number of distinct elevations of $\alpha$
          which $\widetilde{\beta}$ intersects.
	\end{definition}
         \begin{figure}
           \labellist 
  \hair 2pt
\pinlabel $\hat{\alpha}_0$ at 150 -5
\pinlabel $\hat{\alpha}_1$ at 270 -5
\pinlabel $\hat{\alpha}_2$ at 390 -5
\pinlabel $\hat{\beta}$ at 445 50
   \endlabellist
          \centering
          \includegraphics[width=0.75\textwidth]{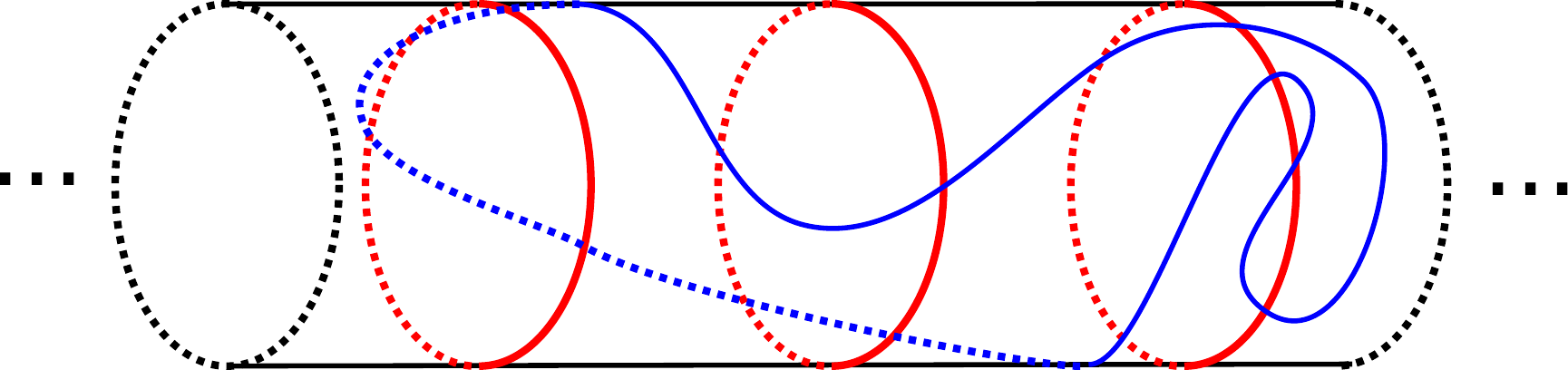}
          \caption{Crossing number via annuli}
          \label{fig:crossing-number}
        \end{figure}
	
	When $\alpha$ and $\beta$ are homotopic essential simple closed curves, there is a useful
        alternative way to describe $C_\alpha(\beta)$. Namely, let
        $A$ be the annular covering of $T$ corresponding
        to the cyclic subgroup generated by $\alpha$. We can identify $A$
        with $S^1 \times \RR$ so that the lifts $\hat{\alpha}_m$ of $\alpha$ are 
        the circles $S^1 \times \{m\}, m \in \ZZ$.

Then the crossing number $C_\alpha{\beta}$ is equal to the cardinality of  \[ \{ \hat{\alpha}_ m :    \hat{\beta}\cap\hat{\alpha}_m \neq \emptyset \},\] where $\hat\beta$ is an elevation of $\beta$. In other words, the number of lifts of $\alpha$ which 
  $\hat{\beta}$ intersects (compare
        Figure~\ref{fig:crossing-number}). Observe that this last
        characterisation is independent of the identification of the
        annulus with $S^1\times \RR$.
	
	\begin{lemma} \label{lem:crossing-estimate} 
          If $\alpha$ and $\beta$ are isotopic simple closed curves on
          $T$, then 
          \[C_\alpha(\beta)+1 \geq d^\dagger(\alpha, \beta).\]
	\end{lemma}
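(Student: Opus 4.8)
The plan is to build a path in $\cd(T)$ from $\alpha$ to $\beta$ whose length is at most $C_\alpha(\beta)+1$, by working in the annular cover $A = S^1\times\RR$ of $T$ corresponding to $\langle\alpha\rangle$ and pushing $\beta$ across one lift of $\alpha$ at a time. Let $\hat\beta\subset A$ be an elevation of $\beta$; since $\beta$ is isotopic to $\alpha$, the curve $\hat\beta$ is a closed curve in $A$ that is homotopic to the core $S^1\times\{0\}$, hence it separates $A$ and the two ends of $A$ lie in different complementary components of $\hat\beta$. Let $k=C_\alpha(\beta)$, so $\hat\beta$ meets exactly $k$ of the lifts $\hat\alpha_m$; after relabeling these are $\hat\alpha_0,\dots,\hat\alpha_{k-1}$ (they must be consecutive, since $\hat\beta$ is connected and separates the ends). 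The idea is that the image of $\hat\beta$ downstairs, $\beta$, can be homotoped to $\alpha$ by sweeping it past these $k$ lifts; recording the intermediate curves after crossing each lift gives a sequence $\beta=\beta_0,\beta_1,\dots$ of simple closed curves on $T$, consecutive ones being disjoint (or at worst intersecting once, which still gives an edge in $\cd(T)$), terminating at a curve disjoint from $\alpha$ — so at $\alpha$ up to one more step.

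More carefully, here is the inductive step I would carry out. If $C_\alpha(\beta)=0$ then $\hat\beta$ is disjoint from every lift of $\alpha$, so $\beta$ is disjoint from $\alpha$ in $T$ and $d^\dagger(\alpha,\beta)\le 1$, giving the base case. If $C_\alpha(\beta)=k\ge 1$, consider the "topmost" lift $\hat\alpha_{k-1}$ that $\hat\beta$ meets. The region of $A$ above $\hat\alpha_{k-1}$ (the component of $A\setminus\hat\alpha_{k-1}$ containing the $+\infty$ end) contains a subarc configuration of $\hat\beta$; I would perform a surgery on $\hat\beta$ along $\hat\alpha_{k-1}$ — essentially the curve-surgery move from the proof of Lemma~\ref{lem:intersection-bound}, carried out equivariantly with respect to the deck group so that it descends to $T$ — replacing $\beta$ by a curve $\beta'$ which is isotopic to $\beta$ (still isotopic to $\alpha$), is disjoint from $\beta$ or meets it once, hence satisfies $d^\dagger(\beta,\beta')\le 1$ — wait, I must be careful: the surgered curve might need to be combined with an arc of $\hat\alpha_{k-1}$. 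Let me instead phrase the move as: replace the portion of $\hat\beta$ lying above $\hat\alpha_{k-1}$ by an arc running parallel and just below $\hat\alpha_{k-1}$, producing a new simple closed curve $\hat\beta'$ homotopic to $\hat\beta$ with $C_\alpha(\beta')\le k-1$; choosing the modification in a small neighbourhood of $\hat\alpha_{k-1}$ keeps it disjoint from $\hat\beta$ except possibly forced intersections, but since we only modify on one side we can keep $\beta'$ disjoint from $\beta$. Then $d^\dagger(\alpha,\beta)\le d^\dagger(\beta,\beta')+d^\dagger(\alpha,\beta') \le 1 + (C_\alpha(\beta')+1) \le 1 + k = C_\alpha(\beta)+1$ by the inductive hypothesis, completing the induction.

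The main obstacle I anticipate is making the "push past one lift" surgery rigorous and checking it is genuinely realized by a simple closed curve on $T$ (not just in the cover): one must verify that the modification of $\hat\beta$ near $\hat\alpha_{k-1}$ can be chosen invariant under the deck transformation $t\colon(x,y)\mapsto(x,y+1)$ intertwined appropriately — actually the relevant point is that $\hat\alpha_{k-1}$ is a single lift, so one works in the intermediate cover, or one works directly downstairs with $\beta$ and $\alpha$ and tracks which elevation of $\alpha$ is being crossed. A cleaner route, which I would probably adopt to sidestep equivariance bookkeeping, is to argue directly on $T$: among the lifts $\hat\alpha_0,\dots,\hat\alpha_{k-1}$ the "extreme" one cobounds with $\hat\beta$ a subsurface of $A$ containing no other $\hat\alpha_m$, and the corresponding move on $\beta$ in $T$ is a standard bigon/innermost-arc surgery reducing the crossing number by one while changing $\beta$ by distance $\le 1$ in $\cd(T)$. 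The only subtlety then is the genus-$1$ convention that curves meeting once are still adjacent, which is exactly what allows the surgered curve to be adjacent to the original even when the push-off is forced to cross it once; I would flag this as the one place the torus definition of $\cd$ is essential.
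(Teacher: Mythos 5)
Your plan reverses the roles of $\alpha$ and $\beta$: you propose to surger $\beta$, pushing it past the top lift $\hat\alpha_{k-1}$ one step at a time, whereas the paper surgers $\alpha$ and leaves $\beta$ alone. This is not a cosmetic difference; it is precisely what makes the argument close. The paper replaces the segment of $\hat\alpha_{b-1}$ bounding each disk $B_i$ by the corresponding arc $b_j$ of $\hat\beta$: those detour arcs are already pairwise disjoint (being subarcs of the simple curve $\hat\beta$), they all lie in a single fundamental domain $S^1\times[b-1,b)$, so the surgered curve $\alpha'$ projects to an embedded curve in $T$, is disjoint from $\alpha$ (all arcs approach $\hat\alpha_{b-1}$ from the same side), and has crossing number one less. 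Each inductive step therefore costs exactly one edge in $\cd(T)$ and drops $C_\alpha(\beta)$ by at least one.

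Your move — replacing the arcs $b_j$ of $\hat\beta$ above $\hat\alpha_{k-1}$ by arcs ``running parallel and just below'' — does not obviously produce an embedded curve in $T$, and the difficulty you flag is genuine rather than a bookkeeping nuisance. The replacement arcs have to sit in the thin strip just below $\hat\alpha_{k-1}$, but that strip already contains arcs of $\hat\beta$ (the arcs emanating downward from the same intersection points) as well as arcs of deck translates $\hat\beta+m$; when the $b_j$'s are nested, a replacement arc $b_j'$ running under a long segment of $\hat\alpha_{k-1}$ will cross the downward arcs emanating from interior intersection points of that segment, so $\beta'$ fails to be simple. The phrase ``since we only modify on one side we can keep $\beta'$ disjoint from $\beta$'' is where this is swept under the rug: the modification moves material \emph{across} $\hat\alpha_{k-1}$ into a region that is not empty. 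Your fallback, a ``standard bigon/innermost-arc surgery reducing the crossing number by one,'' also does not work as stated: an innermost-bigon surgery reduces the number of intersection points with $\hat\alpha_{k-1}$ by two, but does not eliminate \emph{all} of them, so it does not reduce the crossing number at all in a single $\cd$-step. Repeating it until $\hat\alpha_{k-1}$ is cleared costs on the order of half the intersection number with $\hat\alpha_{k-1}$, which reproduces the weaker intersection-number bound of Lemma~\ref{lem:intersection-bound}, not the crossing-number bound claimed here. The recursion scheme (base case $C_\alpha(\beta)=0 \Rightarrow d^\dagger\le 1$, inductive step reducing $C_\alpha$ by one at $\cd$-cost one) is the right shape, but to make it run you should surger $\alpha$, not $\beta$.
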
 
	
	\begin{proof}
          We will perform a surgery replacing $\alpha$ with a
          homotopic curve $\alpha'$ such that $i(\alpha, \alpha') = 0$
          and $C_{\alpha'}(\beta) \leq C_\alpha(\beta) - 1$. By
          induction, this is enough to prove the lemma.
		
		To this end, we fix the cover $A$ and a lift $\hat{\beta}$ as in the discussion before the lemma, and keep the notation  $\hat{\alpha}_m = S^1 \times \{m\}$ for the lifts of $\alpha$.  
		Choose $a < b \in \mathbb{Z}$ with $b-a$ minimal, so that $\hat{\beta} \subset S^1 \times [a,b]$.  Note that, if $\hat{\beta}$ intersects $\hat\alpha_b = S^1 \times \{b\}$, then we may replace $\alpha$ with a nearby parallel copy of itself $\alpha'$ avoiding these finitely many points or intervals of intersection, and decreasing crossing number, thus already satisfying our desired outcome.
		Thus, we may assume $\hat{\beta}$ does not intersect $S^1 \times \{b\}$.  
		Consider the intersection of $\hat{\beta}$ with $S^1\times[b-1,\infty)$. This is a collection of arcs
		$b_1, \ldots, b_k$
		with each arc intersecting $\hat{\alpha}_{b-1}$ only in its endpoints. 
		Choose disjoint, closed disks $B_1, \ldots, B_r$ with the following properties:
		\begin{enumerate}
			\item Each $B_i$ is bounded by a segment contained in $\hat{\alpha}_{b-1}$, and one of the $b_j$.
			\item Each $b_j$ is contained in one of the $B_i$.
			\item All $B_i$ are disjoint from $\hat{\alpha}_i, i\neq b-1$.
		\end{enumerate} 
		By the third property, the $B_i$ map to disjoint, embedded disks in $T$ under the covering map.
		         \begin{figure}
	 \labellist 
  \hair 2pt
\pinlabel $\hat{\alpha}_{b-1}$ at 30 -10
\pinlabel \textcolor{red}{$\hat{\alpha}'$} at 140 60
\pinlabel $B_1$ at 110 100
\pinlabel $B_2$ at 80 30

   \endlabellist

          \centering
          \includegraphics[width=0.4\textwidth]{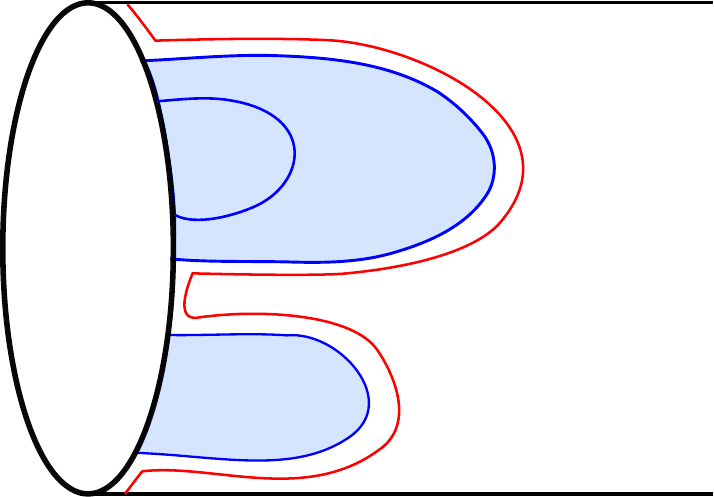}
          \caption{Surgery of $\alpha$ along arcs of $\beta$}
          \label{fig:surgery}
        \end{figure}

		We let $\alpha'$ be the curve obtained from surgery of
                $\alpha$ at all $b_j$ which appear as boundary
                segments of $B_i$.  Since this surgery can be done on
                $A$, $\alpha'$ is homotopic to $\alpha$. Since all
                $b_j$ approach $\hat{\alpha}_{b-1}$ from the same side, the surgered curve $\alpha'$
                can be chosen to be disjoint from $\alpha$. Hence, the
                cover defined by $\alpha'$ is still $A$. The lifts
                $\hat{\alpha}'_i$ are obtained from $\hat{\alpha}_i$
                by surgering at deck group translates of the $B_i$.
		
		Thus, the lift $\hat{\beta}$ can only intersect $\hat{\alpha}'_i$ if it also intersects $\hat{\alpha}_i$.
		By property 2, $\hat{\beta}$ does not intersect $\hat{\alpha}'_{b-1}$. Thus, we have $C_{\alpha'}(\beta) \leq C_\alpha(\beta) - 1$ by the description of crossing number before the lemma.
	\end{proof} 
	
	\begin{remark}
		One could define a version of crossing number also for surfaces of higher genus by using the Bass--Serre tree of the cyclic splitting of the fundamental group determined by $\alpha$. In this case the diameter of the projection to the Bass--Serre tree will provide an upper bound on distance, the idea of the proof is the same.   However, since we do not need this for our intended applications, we do not pursue this here.   
		
	\end{remark}
	
	\section{Hyperbolic Elements} \label{sec:hyperbolic} 
	In this section, we will study homeomorphisms isotopic to the identity acting
        hyperbolically on $\cd$ and prove Theorem~\ref{thm:hyperbolic-characterisation} from the introduction.

\subsection{Proof of Theorem~\ref{thm:hyperbolic-characterisation}}

Recall that Theorem~\ref{thm:hyperbolic-characterisation} asserts the equivalence of the following statements for $f \in \Homeo_0(T)$:
       
	\begin{enumerate}
	\item $f$ acts hyperbolically on $\cd(T)$,
	\item the rotation set $\rho(f)$ of $f$ has non-empty interior, and
	\item there is a finite $f$-invariant set $P \subset T$ such that $f$ represents a pseudo-Anosov mapping class of $T - P$.  
	\end{enumerate}
	For the proof, we will need the following consequence of Lemma~\ref{lem:crossing-estimate}.  
	\begin{lemma} \label{cor:bound} 
		If $f$ acts hyperbolically on $\cd(T)$ then for any curve $\alpha\in\cd(T)$ and for all $n>0$
 we have
		\[C_\alpha (f^n(\alpha)) \geq |f|\cdot n - 1.\] 
	\end{lemma}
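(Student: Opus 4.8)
The plan is to chain together two facts already established: Lemma~\ref{lem:crossing-estimate}, which says $C_\alpha(\beta) + 1 \geq d^\dagger(\alpha,\beta)$ for isotopic curves $\alpha, \beta$ on $T$, and the definition of asymptotic translation length. First I would observe that $f \in \Homeo_0(T)$ is isotopic to the identity, so for every $n$ the curve $f^n(\alpha)$ is isotopic to $\alpha$; hence Lemma~\ref{lem:crossing-estimate} applies with $\beta = f^n(\alpha)$, giving
\[
C_\alpha(f^n(\alpha)) + 1 \geq d^\dagger(\alpha, f^n(\alpha)).
\]
So it suffices to show $d^\dagger(\alpha, f^n(\alpha)) \geq |f|\cdot n$ for all $n > 0$.

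The latter inequality is a soft general fact about asymptotic translation length: for any isometry $g$ of a metric space and any basepoint $x$, the orbit map $n \mapsto d(x, g^n x)$ is subadditive, so by Fekete's lemma $|g| = \lim_n \tfrac{1}{n} d(x, g^n x) = \inf_n \tfrac{1}{n} d(x, g^n x)$, and in particular $d(x, g^n x) \geq n|g|$ for every $n \geq 1$. Applying this with $g = f$ acting on $\cd(T)$ and $x = \alpha$ yields $d^\dagger(\alpha, f^n(\alpha)) \geq n|f|$, which combined with the displayed inequality gives $C_\alpha(f^n(\alpha)) \geq |f|\cdot n - 1$ as claimed. (The hyperbolicity hypothesis on $f$ is not strictly needed for the inequality itself — it only guarantees the bound is non-vacuous, i.e.\ that $|f| > 0$ — so I would simply record the inequality as stated.)

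There is essentially no obstacle here; the only things to be careful about are that $f$ being in $\Homeo_0(T)$ is what licenses the use of Lemma~\ref{lem:crossing-estimate} (which requires $\alpha$ and $\beta$ isotopic), and that the subadditivity/infimum characterisation of $|\cdot|$ is exactly the ``standard exercise'' alluded to in Section~\ref{sec:background}. So the write-up is two lines: invoke isotopy-invariance plus Lemma~\ref{lem:crossing-estimate}, then invoke $d^\dagger(\alpha, f^n\alpha) \geq n|f|$.
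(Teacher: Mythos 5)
Your proof is correct and matches the paper's argument exactly: the paper's proof is the single chain of inequalities $|f|\cdot n \leq d^\dagger(\alpha, f^n(\alpha)) \leq C_\alpha(f^n(\alpha)) + 1$, with the first inequality being the standard infimum characterisation of asymptotic translation length and the second being Lemma~\ref{lem:crossing-estimate}. Your added remarks — that $f \in \Homeo_0(T)$ is what makes $f^n(\alpha)$ isotopic to $\alpha$ so that the crossing estimate applies, and that hyperbolicity is not logically needed for the inequality itself — are both accurate and arguably make the justification more complete than the paper's one-line version.
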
	
	\begin{proof} 
          
	  Using Lemma~\ref{lem:crossing-estimate} we
          have for all $n>0$ \[|f|\cdot n \leq d^\dagger(\alpha, f^n(\alpha)) \leq
          C_\alpha(f^n(\alpha))+1.\qedhere \] 

	\end{proof}

	\begin{proof}[Proof of Theorem~\ref{thm:hyperbolic-characterisation}]
			The assertion $(2) \Rightarrow (3)$ is Theorem~\ref{thm:LM} of Llibre--MacKay \cite{LM},
			and $(3) \Rightarrow (1)$ is Theorem~\ref{thm:TL}. Thus, we need only show the implication $(1) \Rightarrow (2)$.  
		
			Suppose $f$ acts hyperbolically on $\cd(T)$. Identify $T$ with
			$\RR^2/\ZZ^2$, and let $\alpha$ be the simple closed curve whose lifts to $\RR^2$ are the horizontal lines in $\RR^2$ with integer second co-ordinates. Let $\widetilde{f}$ be a lift of $f$ to $\RR^2$. 
			
			By Lemma~\ref{cor:bound} we have \[C_\alpha(f^n(\alpha)) \geq |f|\cdot n - 1.\]  
			This means that for any $n$, lifts of $f^n(\alpha)$ intersect at least $|f|\cdot n - 1$ distinct
			horizontal lines (with integer second co-ordinate) in the plane $\RR^2$. In other words, there are sequences of points $z_n$ and $z'_n$
			(on the same horizontal line) so that, for each $n\in\NN$ the points $\widetilde{f}^n(z_n)$ and $\widetilde{f}^n(z'_n)$ have $y$-coordinate differing by at least $|f|\cdot n - 1$, hence the displacements $d_n \coloneq  f^n(z_n) - z_n$ and $d'_n \coloneq f^n(z'_n) - z'_n$ have $y$-coordinates differing by at least $|f|\cdot n - 1$.  This implies that the projection of $\rho(f)$  to the $y$-axis has diameter at least $|f|$. 
			
Since rotation sets are convex and compact, we need only now rule out the possibility that $\rho(f)$ is a line segment.   Suppose for contradiction that $\rho(f)$ were a line segment, $\ell$.    If $\ell$ has rational slope, then we may find $A \in \SL_2(\ZZ)$ such that $A(\ell)$ is a subset of the $x$-axis.  As discussed in Section~\ref{sec:rot}, we have $\rho(AfA^{-1}) = A\rho(f)$. Since asymptotic translation length is a conjugacy invariant, we have $|AfA^{-1}| = |f|$, so we may apply the same argument as above to conclude that projection of $\rho(AfA^{-1}) = A \rho(f)$ to the $y$-axis has diameter at least $|f|$, a contradiction.    If the slope of $\ell$ is irrational, we can again find $A \in \SL_2(\ZZ)$ such that $A(\ell)$ has projection to the $y$-axis a set of diameter less than $|f|$: in fact, with a process similar to Euclid's algorithm, one may apply elementary matrices to make the projection of the length of the segment $\ell$ arbitrarily small, again contradicting the lower bound. This concludes the proof.  
	\end{proof} 

In the following proposition we bound the area of the rotation set from below by a constant multiple of the square of the asymptotic translation length on $\cd(S)$. There is no analogous upper bound because there are rotation sets with arbitrarily large area but of bounded height, and height bounds the translation length from above by Lemma~\ref{cor:bound}.

\begin{proposition}\label{prop:areabound}
Let $f\in\Homeo_0(T)$ act hyperbolically on $\cd(T)$. Then \[ \mathrm{Area}\rho(f) \geq \frac{\sqrt{3}}{8} |f|^2.\]\end{proposition}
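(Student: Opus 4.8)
The plan is to combine the crossing-number lower bound of Lemma~\ref{cor:bound} over a whole family of curves with a convex-geometric inequality. Write $\omega(a,b)=a_1b_2-a_2b_1$ for the standard area form on $\RR^2$. For a primitive integer vector $v$, let $\alpha_v\subset T=\RR^2/\ZZ^2$ be the simple closed curve whose lifts to $\RR^2$ are the affine lines directed by $v$; since $f\in\Homeo_0(T)$, each $f^n(\alpha_v)$ is isotopic to $\alpha_v$. The elevations of $\alpha_v$ to $\RR^2$ are exactly the lines on which $z\mapsto\omega(v,z)$ takes a fixed integer value, consecutive elevations differing by $1$; hence a lift of a curve meeting $N$ distinct elevations of $\alpha_v$ takes values of $\omega(v,\cdot)$ in an interval of length at least $N-1$. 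The first step is to promote the displacement estimate in the proof of Theorem~\ref{thm:hyperbolic-characterisation} to a uniform statement: \emph{for every primitive integer vector $v$, the interval $\omega(v,\rho(f))\subset\RR$ has length at least $|f|$}. Indeed, Lemma~\ref{cor:bound} applied to $\alpha=\alpha_v$ gives $C_{\alpha_v}(f^n(\alpha_v))\ge|f|\,n-1$, so for a fixed lift $\widetilde f$ of $f$ there are parameters $s_1=s_1(n)$, $s_2=s_2(n)$ in $[0,1]$ with $\omega\big(v,\widetilde f^{\,n}(s_1v)\big)-\omega\big(v,\widetilde f^{\,n}(s_2v)\big)\ge|f|\,n-2$; since $\omega(v,s_iv)=0$, the bounded displacements $\tfrac1n\big(\widetilde f^{\,n}(s_iv)-s_iv\big)$ have, along a common subsequence, limits $w_1,w_2\in\rho(\widetilde f)$ with $\omega(v,w_1-w_2)\ge|f|$. (Equivalently, conjugate $f$ by an $A\in\SL_2(\ZZ)$ taking $v$ to $(1,0)$ and invoke the vertical-extent bound of Theorem~\ref{thm:hyperbolic-characterisation}, using $\rho(AfA^{-1})=A\rho(f)$ and $|AfA^{-1}|=|f|$.)

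It now suffices to prove a purely convex-geometric claim (with $\ell=|f|$): if $K\subset\RR^2$ is compact convex and the interval $\omega(v,K)$ has length $\ge\ell$ for every primitive integer $v$, then $\mathrm{Area}(K)\ge\tfrac13\ell^2$, which in particular exceeds $\tfrac{\sqrt3}{8}\ell^2$. To see this, let $D=K-K$, a compact, centrally symmetric convex body with $\mathrm{Area}(D)\le6\,\mathrm{Area}(K)$ by the planar Rogers--Shephard inequality. For each primitive $v$ choose $p,q\in K$ with $\omega(v,p-q)\ge\ell$; with $J(a,b)=(-b,a)$ we have $\omega(v,z)=\langle Jv,z\rangle$, so this says the support function of $D$ satisfies $h_D(Jv)\ge\ell$. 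Since $J$ permutes primitive integer vectors and $h_D$ is positively homogeneous, $h_D(u)\ge\ell$ for \emph{every} nonzero integer $u$; equivalently, the interior of the symmetric convex body $\ell D^{\circ}$ (polar of $D$) contains no nonzero integer point. Minkowski's theorem gives $\mathrm{Area}(\ell D^{\circ})\le4$, that is $\mathrm{Area}(D^{\circ})\le 4/\ell^{2}$; the planar Mahler inequality $\mathrm{Area}(D)\,\mathrm{Area}(D^{\circ})\ge8$ then yields $\mathrm{Area}(D)\ge2\ell^{2}$, and hence $\mathrm{Area}(K)\ge\tfrac16\mathrm{Area}(D)\ge\tfrac13\ell^{2}$.

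The bookkeeping in the first step — tracking elevations in the cyclic covers of $T$ and the passage to the limit — is the part requiring care, though it is a routine adaptation of the proof of Theorem~\ref{thm:hyperbolic-characterisation}; the convex-geometric step is then formal. The one place where some genuine work is hidden is obtaining a clean constant without the classical inequalities of Mahler and Rogers--Shephard: for that one can instead take a minimum-area triangle $\Delta\supseteq\rho(f)$, use that $\mathrm{Area}(\Delta)\le2\,\mathrm{Area}(\rho(f))$ together with the fact that $\Delta$ inherits the width bounds above, and estimate the area of a triangle subject to those width constraints directly — this more hands-on route is presumably how the constant $\tfrac{\sqrt3}{8}$ arises.
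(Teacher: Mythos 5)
Your proposal is correct, and it takes a genuinely different convex--geometric route from the paper, yielding a \emph{stronger} constant. Both proofs rest on the same dynamical input, namely the width estimate obtained from Lemma~\ref{cor:bound}: for every primitive integer vector $v$, the length of $\omega(v,\rho(f))$ is at least $|f|$ (equivalently, the support function of $\rho(f)-\rho(f)$ is at least $|f|$ at every nonzero integer vector). Your derivation of this from the crossing-number bound applied to the family $\alpha_v$ is sound: taking two parameters $s_1,s_2$ on the line $s\mapsto sv$ whose images under $\widetilde f^{\,n}$ lie on elevations differing by at least $|f|n-2$, normalising by $n$, and passing to a subsequential limit (using Lemma~\ref{lem:MZ} and convexity) gives two points of $\rho(\widetilde f)$ with $\omega(v,\cdot)$-difference at least $|f|$. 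The two arguments then diverge. The paper takes a maximal-area parallelogram $P\subset\rho(f)$, normalises by $A\in\SL_2(\RR)$ so that $A\cdot P$ is an $L^1$-ball, observes that $A\cdot\rho(f)$ then lies inside the corresponding $L^\infty$-ball (hence inside a disc), and bounds that disc from below by applying the width estimate in the direction of the shortest vector of the unimodular lattice $A\cdot\ZZ^2$; Hermite's constant for unimodular lattices in the plane ($h^2\leq 2/\sqrt3$) is the source of the $\sqrt3/8$. You instead form the difference body $D=\rho(f)-\rho(f)$, note that the interior of $|f|\,D^\circ$ contains no nonzero lattice point, and chain together Minkowski's first theorem ($\mathrm{Area}(|f|\,D^\circ)\leq 4$), the planar Mahler inequality ($\mathrm{Area}(D)\,\mathrm{Area}(D^\circ)\geq 8$), and Rogers--Shephard ($\mathrm{Area}(D)\leq 6\,\mathrm{Area}(\rho(f))$) to obtain $\mathrm{Area}(\rho(f))\geq \tfrac13 |f|^2$. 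Since $\tfrac13>\tfrac{\sqrt3}{8}$, your bound improves the one stated in the paper, at the modest cost of invoking Mahler and Rogers--Shephard rather than the more elementary Hermite bound. (Your closing speculation about a minimal-enclosing-triangle route is not how the paper actually obtains its constant, but this does not affect the correctness of your main argument.)
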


\begin{proof} First note that $\rho(f)$ is compact, and has non-empty interior by Theorem~\ref{thm:hyperbolic-characterisation}, so we may pick a maximal area parallelogram $P\subset \rho(f)$. Now find $A\in \SL_2(\RR)$ such that the diagonals of the parallelogram $A\cdot P$ are horizontal/vertical, and of the same length. Thus $A\cdot P$ is equal to a closed $r$-ball in the $L^1$ norm centered at the intersection of the diagonals $Q$. Since $A\cdot \rho(f)$ is also convex, and $A\cdot P\subset A \cdot\rho(f)$ has maximal area, we must have that $A\cdot \rho(f)$ is contained within the closed $r$-ball in the $L^\infty$ norm with the same centre $Q$. This in turn is contained in a disk $D$ of radius $\sqrt{2}r$. Thus $\mathrm{Area} D = \pi\mathrm{Area} P.$ It suffices to bound the diameter of $D$ from below to establish a lower bound on $\mathrm{Area}\rho(f)$.

After post composing by a rotation, we may assume that $(h,0)\in A \cdot \ZZ^2$ is a non-zero vector of smallest length and $h>0$. It is well known that $h^2 \leq \frac{2}{\sqrt{3}}$. Let $(k,v)\in A \cdot \ZZ^2$ be a smallest length vector outside the span of $(h,0)$. Then we must have $v=\frac{1}{h}$ because $A$ is area preserving. Now $D$ contains $A\cdot \rho(f)$, so by Lemma~\ref{cor:bound} the projection of $D$ to the second co-ordinate is at least $\frac{1}{h} \cdot |f|$. This gives the desired lower bound on $\mathrm{Area}D=\pi \mathrm{Area}P$ and hence $\mathrm{Area}\rho(f)$ as required.\end{proof}

\subsection{General characterisation of hyperbolic isometries} \label{sec:rot_general}

In this section we prove the following extensions of Theorem \ref{thm:hyperbolic-characterisation}. 

\begin{theorem}[Characterisation of hyperbolic homeomorphisms] \label{thm:hyperbolic-characterisation-general}
Let $f$ be an orientation preserving homeomorphism of $T$. Then $f$ acts hyperbolically on $\cd(T)$ if and only if $f$ satisfies one of the following (mutually exclusive) conditions.
\begin{enumerate}
\item The homeomorphism $f$ is isotopic to an Anosov homeomorphism of $T$,
\item a finite power of $f$ is isotopic to a Dehn twist map about some simple closed curve $\alpha$, and its rotation set $\rho_\alpha$ has non-empty interior, or
\item a finite power of $f$ is isotopic to the identity and its rotation set has non-empty interior.
\end{enumerate}
\end{theorem}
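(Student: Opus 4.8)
The plan is to organise the argument around the classification of mapping classes of $T$. Since $\mcg(T)=\SL_2(\ZZ)$, the class $[f]$ is exactly one of three types: (i) \emph{Anosov} ($|\mathrm{tr}|>2$); (ii) \emph{non-central parabolic} ($|\mathrm{tr}|=2$ and $[f]\neq\pm\id$), in which case some power of $[f]$ equals $\tau_\alpha^{\,r}$ with $r\neq0$ for a suitable curve $\alpha$, obtained by conjugating the normal form $\pm\begin{pmatrix}1&m\\0&1\end{pmatrix}$ by an element of $\mcg(T)$ realised by a homeomorphism and transporting $\alpha$ accordingly; or (iii) \emph{finite order} ($|\mathrm{tr}|<2$, or $[f]=\pm\id$). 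These alternatives are mutually exclusive and exhaustive; since each of conditions (1), (2), (3) forces $[f]$ into the corresponding type, the mutual exclusivity asserted in the theorem is immediate. Moreover $|f|_{\cd(T)}>0$ if and only if $|f^{k}|_{\cd(T)}>0$ for every $k\geq1$, so in cases (ii) and (iii) we may pass to an appropriate power and assume outright that $[f]=\tau_\alpha^{\,r}$ with $r\neq0$, respectively that $f\in\Homeo_0(T)$. It then remains to prove, within each type, that $f$ acts hyperbolically on $\cd(T)$ if and only if the corresponding condition holds; this will complete the proof.

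Case (iii) is precisely Theorem~\ref{thm:hyperbolic-characterisation}. For case (i) I would use the ``forget the embedding'' map $\pi\colon\cd(T)\to\C(T)$ sending a curve to its isotopy class. It is $\Homeo(T)$--equivariant for the $\mcg(T)$--action on the Farey graph $\C(T)$, and it is $1$--Lipschitz: if $d^\dagger(\alpha,\beta)\leq1$ then $\#(\alpha\cap\beta)\leq1$, so $i([\alpha],[\beta])\leq1$ and $\pi(\alpha),\pi(\beta)$ are equal or Farey--adjacent. Hence $|f|_{\cd(T)}\geq|[f]|_{\C(T)}$, and an Anosov element acts loxodromically on $\C(T)$ (classically: it acts on $\HH^{2}$ with a hyperbolic axis joining its two irrational fixed slopes; cf.\ \cite{MM1}), so $|[f]|_{\C(T)}>0$ and $f$ is hyperbolic. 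Conversely, if $[f]$ is Anosov, condition (1) holds and there is nothing further to check.

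The substantive case is (ii), where $[f]=\tau_\alpha^{\,r}$ with $r\neq0$, and one must show $f$ is hyperbolic on $\cd(T)$ exactly when $\rho_\alpha(f)$ is a nondegenerate segment. For the forward implication: $f^{n}(\alpha)$ is isotopic to $\alpha$ for all $n$ (a Dehn twist fixes the isotopy class of its core), so Lemma~\ref{lem:crossing-estimate} gives $d^\dagger(\alpha,f^{n}\alpha)\leq C_\alpha(f^{n}\alpha)+1$. Work in the cyclic cover $\check{T}=S^{1}\times\RR$ of Section~\ref{sec:rot}, where $\alpha$ lifts to $\hat{\alpha}_{0}=S^{1}\times\{0\}$ and $f$ lifts to a homeomorphism $\check{f}$ commuting with the deck translation $t$ (because $[f]=\tau_\alpha^{\,r}$ acts trivially on $\pi_{1}(T)/\langle[\alpha]\rangle$); then $\check{f}^{\,n}(\hat{\alpha}_{0})$ is an elevation of $f^{n}(\alpha)$, so $C_\alpha(f^{n}\alpha)$ equals the number of circles $S^{1}\times\{m\}$ it meets, which is at most $\operatorname{diam}\big(p_{2}(\check{f}^{\,n}(\hat{\alpha}_{0}))\big)+1$. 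The function $g_{n}\coloneq p_{2}\circ\check{f}^{\,n}-p_{2}$ is $t$--invariant, so descends to a continuous function on the compact torus, whose range is an interval; if $\rho_\alpha(f)$ has empty interior, i.e.\ is a single point, then the lengths of the intervals $\{\,g_{n}(\check{x})/n:\check{x}\in\check{T}\,\}$ must tend to $0$ --- otherwise a convergent subsequence of their endpoints would force a nondegenerate interval into $\rho_\alpha(f)$. Hence $\operatorname{diam}\big(p_{2}(\check{f}^{\,n}(\hat{\alpha}_{0}))\big)=o(n)$, whence $d^\dagger(\alpha,f^{n}\alpha)=o(n)$ and $|f|_{\cd(T)}=0$, contradicting hyperbolicity; since $\rho_\alpha(f)$ is always a segment, it has nonempty interior.

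For the reverse implication in case (ii) I would transplant the proof of Theorem~\ref{thm:hyperbolic-characterisation} to the cyclic cover: choosing several rationals with a common denominator $q$ in the interior of $\rho_\alpha(f)$, the Dehn-twist analogue of Franks's theorem (cf.\ \cite{Doeff}) yields periodic points of period $q$ with distinct transversal rotation numbers, and the Dehn-twist analogue of Theorem~\ref{thm:LM} --- proved by running the Llibre--MacKay argument in $\check{T}$, where distinct transversal rotation numbers replace the non-collinearity hypothesis and rule out the reduction along $\alpha$ --- produces a finite $f$--invariant set $P$ with the restriction of $f$ to $T-P$ pseudo-Anosov; Theorem~\ref{thm:TL} and \cite[Proposition~4.6]{MM1} then give $|f|_{\cd(T)}>0$. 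I expect this last step to be the main obstacle: one must make precise (or locate in the literature) the pseudo-Anosov conclusion for maps isotopic to a Dehn twist whose transversal rotation set has nonempty interior, verify that it does not depend on the chosen lift to $\check{T}$, and check that it is compatible with the normal-form reductions of the first paragraph, which must transport the twisting curve $\alpha$ correctly.
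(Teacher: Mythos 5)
Your overall organisation matches the paper's proof exactly: decompose by the (torus) Nielsen--Thurston type of $[f]\in\SL_2(\ZZ)$, use the $1$--Lipschitz forgetful map $\cd(T)\to\C(T)$ for the Anosov case, invoke Theorem~\ref{thm:hyperbolic-characterisation} for the isotopically trivial case, and for the Dehn-twist case split on whether $\rho_\alpha(f)$ is a singleton or not. Your argument for the forward implication in case (ii) --- passing to the cyclic cover $\check T$, bounding the crossing number $C_\alpha(f^n\alpha)$ by $\operatorname{diam}(p_2(\check f^n(\hat\alpha_0)))+1$, and deducing that a singleton transversal rotation set forces sublinear growth of $d^\dagger(\alpha,f^n\alpha)$ via Lemma~\ref{lem:crossing-estimate} --- is precisely what the paper does, with the cosmetic difference that you descend $g_n=p_2\circ\check f^n - p_2$ to the compact torus rather than working with the lift of $\alpha$ directly; both give the same $o(n)$ bound.

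The one genuine gap is exactly where you flag it: the reverse implication in case (ii), showing that $\rho_\alpha(f)$ with nonempty interior forces hyperbolicity. You propose to rebuild a Franks--type and a Llibre--MacKay--type theorem for the cyclic cover from scratch, and correctly note that this is nontrivial (one must replace the non-collinearity hypothesis, make sure the periodic points obtained carry distinct transversal rotation numbers, and rule out reducibility along $\alpha$). This work has in fact already been done: the paper cites Doeff's Theorem~6.5 in \cite{Doeff}, which states directly that a homeomorphism isotopic to a Dehn twist map whose transversal rotation set has nonempty interior is isotopic rel some finite invariant set $P$ to a pseudo-Anosov. Combined with Theorem~\ref{thm:TL} and \cite[Proposition~4.6]{MM1}, this gives $|f|_{\cd(T)}>0$ immediately. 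So the missing ingredient is a citation rather than a new argument; you should not attempt to re-prove Doeff's result, which would make the paper substantially longer and is not the content of the present theorem. Once you cite Doeff, the rest of your proposal is correct and essentially identical to the paper's proof.

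One small point worth stating explicitly in the Anosov case: the map $\pi\colon\cd(T)\to\C(T)$ is only $1$-coarsely Lipschitz (adjacent curves can become equal), and the inequality you want is $|f|_{\cd(T)}\geq|[f]|_{\C(T)}$, which still follows because the coarse Lipschitz constant is $1$ and asymptotic translation length is a limit of distances divided by $n$; additive errors vanish in the limit.
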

Of course, if a homeomorphism $f$ of $T$ reverses the orientation, $f$ acts hyperbolically on $\cd(T)$ if and only if the orientation-preserving homeomorphism $f^2$ does so. We hence have a complete characterisation of elements of $\Homeo(T)$ which act hyperbolically on $\cd(T)$.

\begin{proof} 
For the first statement, it is a special (and easy) case of the Nielsen--Thurston classification theorem that any orientation-preserving homeomorphism of $T$ either is isotopic to an Anosov homeomorphism of $T$, or has a power isotopic to a Dehn twist map of $T$, or has a power isotopic to the identity.   In the first case, it acts hyperbolically on $\cd(T)$ since it acts hyperbolically on the Farey graph $\C(T)$, and there is a $1$-coarsely Lipschitz map $\cd(T)\to \C(T)$ via sending $\alpha$ to its isotopy class $[\alpha]$. In the third case, Theorem~\ref{thm:hyperbolic-characterisation} states that $f$ acts hyperbolically if and only if its rotation set has nonempty interior.  Thus, we need only understand the case where $f$ is isotopic to a Dehn twist. 

To this end, suppose $f$ is isotopic to a Dehn twist around the essential simple closed curve $\alpha$. Assume first that $\rho_\alpha(f)$ has non-empty interior.  Then, by a theorem by Doeff (see \cite[Theorem~6.5]{Doeff}), $f$ is isotopic to a pseudo-Anosov homeomorphism relative to a finite subset $P$ of $T$. By Theorem~\ref{thm:TL}, this implies that $f$ acts hyperbolically on $\cd(T)$.

Now, suppose that $\rho_\alpha(f)$ has empty interior, in which case it is a singleton. With the same methods as for homeomorphisms isotopic to the identity, we will prove that $f$ cannot act hyperbolically on $\cd(T)$.  Denote by $\check{T}$ the cyclic covering associated to $\alpha$. Take any lift $\check{\alpha}$ of the loop $\alpha$ to $\check{T}$ and fix a lift $\check{f}\colon\check{T} \rightarrow \check{T}$ of $f$. We identify $\check{T}$ with $\mathbb{R}/\mathbb{Z} \times \mathbb{R}$, where the group of deck transformations of the covering map $\check{T} \rightarrow T$ is the group of integral translations of $\mathbb{R}/\mathbb{Z} \times \mathbb{R}$. Let $p_{2}\colon \check{T}=\mathbb{R}/\mathbb{Z} \times \mathbb{R} \rightarrow \mathbb{R}$ be the projection. For any $n \geq 0$, denote by $D_n$ the diameter of $p_{2}(\check{f}^{n}(\check{\alpha}))$ and observe that $\lfloor D_{n} \rfloor \leq C_\alpha(f^{n}(\alpha)) \leq \lfloor D_n \rfloor+1$.

As the rotation set of $f$ is a singleton, 
$$ \lim_{n \rightarrow +\infty} \frac{D_n}{n}=0$$
so 
$$ \lim_{n \rightarrow +\infty} \frac{C_\alpha(f^{n}(\alpha))}{n}=0.$$
Hence, by Lemma~\ref{lem:crossing-estimate}, 
$$ \lim_{n \rightarrow +\infty} \frac{d^{\dagger}(\alpha,f^{n}(\alpha))}{n}=0$$
and $f$ does not act hyperbolically, concluding the proof of Theorem \ref{thm:hyperbolic-characterisation-general}.
\end{proof}

	\section{Parabolic and Elliptic Elements} \label{sec:parabolic}
	Given Theorem~\ref{thm:hyperbolic-characterisation}, one might na\"ively hope for a similar characterisation of elliptic and parabolic isometries of $\cd(T)$ in terms of their rotation sets.    In this section we show this na\"ive hope is too optimistic, giving sufficient dynamical criteria for elliptic and parabolic actions on the torus, then showing why these cannot be necessary.   Finally, we return from the torus to surfaces of higher genus and complete the proof of Theorem \ref{thm:parabolicsexist}.
	
\subsection{Sufficient dynamical criteria}
Recall Theorem \ref{irrationalimpliesparabolic} is the claim that, for $f \in \Homeo_0(T)$, 
\begin{quote}
\begin{enumerate}
\item if $\rho(f)$ is an irrational slope segment, then $f$ acts on $\cd(T)$ as a parabolic isometry, and 
\item if $\rho(f)$ is a rational slope segment through a rational point, then the action of $f$ is elliptic.
\end{enumerate} 
\end{quote} 

\begin{proof}[Proof of Theorem \ref{irrationalimpliesparabolic}, rational slope through rational point case]
This case follows from work of D\'avalos \cite{Davalos} and our crossing number estimates.   Suppose that $f$ is isotopic to the identity and the rotation set $\rho(f)$ is a rational slope segment through a rational point.
By Theorem~A in \cite{Davalos}, $f$ is \emph{annular}, 
		which means that there is some $v\in \ZZ^2$ so that for any lift $F$ of $f$ to $\RR^2$ we have
		\[ |\langle F^n(x)-x, v \rangle| \leq M \]
		for some $M$ and all $x \in \RR^2$. This implies that the crossing numbers $C_\alpha(f^n\alpha)$ (as in Section~\ref{sec:estimates})
		are bounded for suitable $\alpha$, and thus $f$ acts elliptically, as desired.  
\end{proof} 

\begin{remark} 
A similar argument can be applied to homeomorphisms isotopic to a Dehn twist map around an essential simple closed curve $\alpha$ whose rotation set consists of a single rational point. Denote by $\check{T}$ the cyclic cover associated to $\alpha$, by $\check{f}\colon\check{T} \rightarrow \check{T}$ a lift of $f$ to this cyclic cover and, by $t$ a positive generator of the group of deck transformations of $\check{T} \rightarrow T$.  Then, if the rotation set of $\check{f}$ is $\left\{ \frac{p}{q} \right\}$, by a theorem by Addas-Zanata, Garcia and Tal (see \cite[Theorem 2]{AGT}), the homeomorphism $t^{-p}\check{f}^{q}$ has a compact invariant set which separates $\check{T}$. Hence $f$ acts elliptically on $\cd(T)$.
\end{remark} 
	
To treat the case of irrational slope, we use the following construction. 

\begin{construction} \label{const:cover}
For $n \in \NN$, let $T_n \to T$ be the cover of the torus defined by reducing mod $n$ in homology:
	\[ \pi_1(T) \to (\ZZ/n\ZZ)^2 \]
	We equip $T$ with the usual flat metric inherited
	from $T = \RR^2 / \ZZ^2$, and we equip $T_n$ with
	the metric obtained by pulling back the metric of $T$ to $T_n$, and rescaling by $1/n$. Observe that
	$T$ and $T_n$ are isometric, but not via the covering map.
\end{construction} 
	
The following lemma says that this covering behaves nicely with respect to the associated curve graphs:
	\begin{lemma}\label{lem:unwrapn}
		Suppose that $\alpha$ and $\beta$ are adjacent in $\cd(T)$. Then any choice of lifts of $\alpha$ and $\beta$ in $T_n$ are adjacent in $\cd(T_n)$.
	\end{lemma}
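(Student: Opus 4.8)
The plan is to lift the disjointness (or single-transverse-intersection) condition defining adjacency in $\cd(T)$ to the cover $T_n$ by choosing representatives carefully. First I would fix representatives of $\alpha$ and $\beta$ that realise the minimal intersection number, so that $\alpha$ and $\beta$ are either disjoint or meet transversely in exactly one point. The key observation is that two simple closed curves on the torus that intersect exactly once must be non-homologous primitive classes, so they generate $H_1(T;\ZZ)\cong\ZZ^2$; hence their images in $(\ZZ/n\ZZ)^2$ are distinct. Reducing mod $n$ in homology, I would track how many times a lift of each curve wraps before closing up, and show that the geometric intersection pattern upstairs is no worse than downstairs.

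The cleanest way to organise the argument is via the explicit model $T = \RR^2/\ZZ^2$ and $T_n = \RR^2/(n\ZZ)^2$ with covering map induced by the identity on $\RR^2$. A simple closed curve $\alpha$ on $T$ has a lift to a line segment in $\RR^2$ from a point $x$ to $x + v_\alpha$, where $v_\alpha\in\ZZ^2$ is primitive; its elevations to $T_n$ are the images of the lines $\RR x' + \RR v_\alpha$, and an elevation of $\alpha$ is a simple closed curve in $T_n$ whose homology class is $n \cdot (\text{primitive class})$ divided appropriately — concretely, a single elevation closes up after going around $n/\gcd(\ldots)$ times. I would then argue: if $\alpha\cap\beta=\emptyset$ on $T$, I can isotope so their lifts to $\RR^2$ are families of parallel lines missing each other, hence chosen elevations in $T_n$ are disjoint. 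If $\alpha$ and $\beta$ meet once transversely on $T$, then $v_\alpha, v_\beta$ form a basis of $\ZZ^2$; a lift of $\alpha$ and a lift of $\beta$ in $\RR^2$ meet in exactly one point, and two elevations in $T_n$ — which are quotients of single lines in $\RR^2$ — meet in at most one point of $T_n$ since any two intersection points would differ by a vector in $(n\ZZ)^2$ lying on both lines, forcing it to be a multiple of $v_\alpha$ and of $v_\beta$, hence zero (as $v_\alpha,v_\beta$ span and $n\ZZ^2$ has the right index). This shows the chosen lifts meet at most once, which is exactly adjacency in $\cd(T_n)$.

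The main obstacle is the bookkeeping about \emph{which} elevations one gets and showing the count of intersection points does not increase — in particular handling the fact that a single elevation of a curve in $T_n$ may be a long curve wrapping $n$ times, so one must be sure that the chosen elevations of $\alpha$ and of $\beta$ are the "right" ones and that their mutual intersections in $T_n$ are controlled. I expect this to reduce, after passing to $\RR^2$, to the elementary lattice statement that if $v_\alpha$ and $v_\beta$ are a $\ZZ$-basis of $\ZZ^2$ then no nonzero element of $n\ZZ^2$ is a $\ZZ$-multiple of both $v_\alpha$ and $v_\beta$; once that is in hand the rest is routine. I would also note the statement is symmetric in the two types of adjacency, so it suffices to treat disjointness and the one-intersection case separately as above, and that the "any choice of lifts" clause follows because any two elevations of a given curve differ by a deck transformation, which is an isometry preserving the relevant counts.
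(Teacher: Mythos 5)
Your proposal is essentially sound and reaches the right conclusion, but you take a more computational, lattice-theoretic route than the paper. The paper's proof is shorter: in the one-intersection case it applies a homeomorphism of $T$ taking $(\alpha,\beta)$ to the standard generators (change of coordinates), notes that every homeomorphism of $T$ lifts to the characteristic cover $T_n$ because $n\ZZ^2$ is a characteristic subgroup, and then just inspects the standard picture. You instead work directly in $\RR^2 \to T_n \to T$ and reduce to a statement about lattices. Both are valid; yours is arguably more illuminating about \emph{why} the intersections don't multiply upstairs, at the cost of more bookkeeping.

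Two small points need tightening. First, vertices of $\cd(T)$ are actual curves, not isotopy classes, so you cannot freely isotope $\alpha$ and $\beta$ to straight lines -- that would change the curves. In the disjoint case no isotopy is needed at all (preimages of disjoint sets are disjoint, full stop). In the one-intersection case, the cleanest fix is exactly the paper's move: replace $(\alpha,\beta)$ by $(h(\alpha),h(\beta))$ for a homeomorphism $h$ of $T$ with $h(\alpha),h(\beta)$ the standard linear curves; since $h$ lifts to $T_n$ this is harmless. Second, your lattice bookkeeping is a bit muddled: the preimage in $\RR^2$ of an elevation $\hat\alpha$ is the full union $L_\alpha + n\ZZ^2$, not a single line, so the phrase ``any two intersection points would differ by a vector in $(n\ZZ)^2$ lying on both lines'' does not quite parse. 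The correct computation is $(L_\alpha+n\ZZ^2)\cap(L_\beta+n\ZZ^2)=n\ZZ^2$: writing a common point as $t v_\alpha+m_1=s v_\beta+m_2$ with $m_i\in n\ZZ^2$ forces, by the $\ZZ$-basis property of $v_\alpha,v_\beta$, that $t,s\in n\ZZ$, hence the point lies in $n\ZZ^2$. This is a single deck-group orbit, so $\hat\alpha\cap\hat\beta$ is a single point of $T_n$. Your stated lattice lemma (``no nonzero element of $n\ZZ^2$ is a $\ZZ$-multiple of both $v_\alpha$ and $v_\beta$'') is true but isn't the statement doing the work.
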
	
	\begin{proof}
		Let $\alpha'$ and $\beta'$ be any choice of lifts in $T_n$ of $\alpha$ and $\beta$. If $\alpha$ and $\beta$ were disjoint then it is immediate that $\alpha'$ and $\beta'$ are disjoint. On the other hand if $|\alpha\cap\beta|=1$ then, after applying a homeomorphism of $T$ (which lifts to the characteristic cover $T_n$), we can suppose that $\alpha$ and $\beta$ are the standard generators of the fundamental group of $T=\mathbb{R}^2/\mathbb{Z}^2$. 
Then it is immediate that lifts $\alpha$ and $\beta$ to $T_n$ have only one intersection point and are thus adjacent in $\cd(T_n)$.  
	\end{proof}

        We also need the following consequence of
        Lemma~\ref{lem:small-diameter-one-cover}.
	\begin{lemma} \label{lem:goodfinitecover}
          Given any $K\geq 0$ there exists a translation surface $X=X(K)$ with the
          following property:

          For any $n$ and any $p \in T_n$, there is a 
          cover $f\colon X \to T_n$ branched only over $p$ so that:
          \begin{enumerate}
          \item If $\alpha$ and $\beta$ are curves on $T_n$ disjoint
            from $p$ with
            \[ d^\dagger(\alpha,\beta)\leq K+1, \] 
            then $\alpha, \beta$ admit disjoint elevations in the branched cover $f$.
          \item The branched cover $f$ is a local isometry at each nonsingular point.
          \end{enumerate}
	\end{lemma}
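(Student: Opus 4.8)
The plan is to construct the translation surface $X = X(K)$ once and for all using Lemma~\ref{lem:small-diameter-one-cover}, then for each $n$ and each $p \in T_n$ realize this $X$ concretely as a branched cover of $T_n$. First I would apply Lemma~\ref{lem:small-diameter-one-cover} with the bound $K+1$: since $T$ and $T_n$ are isometric as flat surfaces (via some isometry, not the covering map), and since the condition $d^\dagger(\alpha,\beta) \le K+1$ produces disjoint elevations in a cover of degree at most $2^{K+1}$, the intersection $\Gamma$ of all index $\le 2^{K+1}$ subgroups of $\pi_1$ gives a finite characteristic cover. However, that lemma produces an \emph{unbranched} finite cover. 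To get a branched cover over a prescribed point $p$ with the isometry property, I would instead note that one wants a single topological branched cover model which can be ``placed'' over any $p$: since $T_n$ is homogeneous under its isometry group (translations act transitively), it suffices to build the branched cover over one fixed basepoint $p_0 \in T_n$ and then translate.

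The key steps, in order: (1) Fix $K$. Using Lemma~\ref{lem:small-diameter-one-cover} applied on the torus, obtain a degree $d = d(K)$ characteristic unbranched cover $Y \to T$ such that any two curves at $\cd$-distance $\le K+1$ admit disjoint elevations to $Y$; because $Y$ is characteristic, the same holds for the corresponding cover of $T_n$ (the relevant finite-index subgroup of $\pi_1(T_n)$ factors through it, and disjoint elevations persist in further covers, exactly as in the proof of Lemma~\ref{lem:small-diameter-one-cover}). (2) Modify $Y$ to a branched cover: compose the cover $Y \to T$ with a further cyclic branched cover totally ramified over the single point $p$, or rather directly build a branched cover $X \to T_n$ branched only over $p$ whose restriction to the complement of the fiber over $p$ contains $Y \setminus (\text{fiber})$ as a subcover — one can take $X \to T_n$ to be a connected branched cover dominating both $Y$ (pulled back to $T_n$) and a cyclic cover branched at $p$, chosen so that the degree still depends only on $K$. (3) Verify property (1): any $\alpha, \beta$ on $T_n$ avoiding $p$ with $d^\dagger(\alpha,\beta)\le K+1$ already have disjoint elevations in the unbranched part coming from $Y$, and these are unaffected by adding branching over $p$ (which lies off $\alpha \cup \beta$); disjoint elevations lift to disjoint elevations in $X$. (4) Verify property (2): away from the branch point $p$ the map $X \to T_n$ is a covering of flat surfaces, hence a local isometry for the pulled-back flat metric; at the preimages of $p$ the surface $X$ acquires cone points, so $X$ is a translation surface (after possibly passing to a cover making holonomy trivial — here one should be mildly careful, and may need to take the branched cover to have even ramification or to pass to a double cover to kill $\mathbb{Z}/2$ holonomy, which only multiplies the degree by a bounded amount).

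The main obstacle I expect is \emph{reconciling the branching with the translation-surface and uniformity requirements}: Lemma~\ref{lem:small-diameter-one-cover} gives an unbranched cover, but here we need branching over a prescribed (and varying) point $p \in T_n$, while keeping the degree of $X \to T_n$ independent of both $n$ and $p$ and keeping $X$ a genuine translation surface. The resolution is that homogeneity of $T_n$ under translations lets us fix $p$ once, and the branched-cover construction (a fiber product of the characteristic unbranched cover with a fixed local branched model near $p$) has degree depending only on $K$; the translation-surface condition is then arranged by a final bounded-degree cover trivializing holonomy. A secondary subtlety is checking that ``disjoint elevations persist in further (branched) covers'' — this is true because an elevation in the intermediate cover pulls back to a disjoint union of elevations upstairs, and disjointness downstairs of two curves forces disjointness of their full preimages, hence of any components; the branch point, lying off the curves, plays no role.
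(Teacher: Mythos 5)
Your step (1) has a genuine and unfixable gap: you propose to ``apply Lemma~\ref{lem:small-diameter-one-cover} on the torus'' to produce an unbranched cover $Y\to T$ with the disjoint-elevation property, but that lemma is stated and proved only for surfaces of genus $g\geq 2$. Its proof runs through Lemma~\ref{lem:cover-distance-2}, which uses that $\cd(S)$-distance $2$ gives a curve disjoint from both $\alpha$ and $\beta$ and that one can choose it non-separating; on the torus the edge relation of $\cd(T)$ is ``disjoint \emph{or} intersecting once,'' and none of this argument goes through. Worse, you cannot bypass this by ingenuity: an unbranched finite cover of a torus is again a torus (multiplicativity of Euler characteristic with $\chi(T)=0$), so no unbranched cover can put you in the genus $\geq 2$ regime where the lemma lives. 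This is precisely why the branching cannot be deferred to step (2) of your plan, and why adding branching ``afterward'' cannot repair step (1).

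The paper's proof reverses your order of operations, and the order is forced. It first takes a degree-$3$ cover $f_1\colon Y\to T_n$ branched only over $p$ with a single totally ramified preimage, which by Riemann--Hurwitz has genus $2$. It then checks that $d^\dagger(\alpha,\beta)\leq K+1$ on $T_n$ implies $d^\dagger(\alpha',\beta')\leq 2K+2$ for any elevations on $Y$ (because elevations to $Y$ of adjacent curves in $\cd(T_n)$ meet in at most three points, hence are within distance $2$ in $\cd(Y)$ by Lemma~\ref{lem:intersection-bound}); this bookkeeping of how distances grow under elevation is absent from your outline but is needed before Lemma~\ref{lem:small-diameter-one-cover} can be invoked. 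Only then does it apply Lemma~\ref{lem:small-diameter-one-cover} to the genus-$2$ surface $Y$ to get an \emph{unbranched} cover $f_2\colon X\to Y$ with disjoint elevations, and sets $f_p=f_1\circ f_2$. The translation-surface structure is simply pulled back through $f_p$; the ramification index $3$ gives a cone angle $6\pi$, an integer multiple of $2\pi$, so no auxiliary holonomy-killing cover is needed. Your closing observation that homogeneity of $T_n$ handles the dependence on $p$ (and the isometry $T\cong T_n$ handles the dependence on $n$) matches the paper exactly and is correct, but it cannot rescue the flawed first step.
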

	We emphasise that in the lemma, the \emph{covering map} $f$ may depend
        on $p$, but the geometry of the translation surface $X$ does not.
        In addition, the geometry of $X$ does not depend on $n$.

	\begin{proof} 
First, we show
          that given $K$ and $p \in T_n$, there is a translation surface
          $X(K,p)$ which has the desired properties for $T_n$, we then show independence from $n$ and $p$.  

          To prove this, begin by observing that there is a
          $3$-fold branched cover $f_1\colon Y\to T_n$, branched only at $p$,
          where $Y$ has genus $2$.
          Suppose that $d^\dagger(\alpha,\beta)\leq K+1$, and let
          $\alpha'$ and $\beta'$ be any choice of elevations of
          $\alpha$ and $\beta$ to $Y$. Observe that
          $d^\dagger(\alpha',\beta')\leq 2K+2$ in $\cd(Y)$. This is
          because any choice of elevations of adjacent curves in $T_n$
          to $Y$ will intersect at most three times, and therefore
          have distance at most $2$ in $\cd(Y)$. Now we may apply
          Lemma~\ref{lem:small-diameter-one-cover} to find a
          finite-sheeted cover $f_2\colon X \to Y$, which only depends on
          $K$ (and $Y$), such that any $\alpha'$ and $\beta'$ as above
          admit disjoint elevations to $X$. The branched cover $f_p =
          f_1\circ f_2\colon  X(K,p)\to T_n$ then has (1).  By pulling back
          the translation surface structure from $T_n$ to $X(K,p)$, we
          can also satisfy (2).

          Now if $p'$ is any other point, observe that there is an
          isometry $\iota\colon T_n \to T_n$ mapping $p$ to $p'$. Then
          $f_{p'} = \iota \circ f_1\circ f_2\colon X(K,p)\to T_n$ has the
          desired property (1) for curves disjoint from the point
          $p'$. 
          Since $\iota$ is an isometry, $f_{p'}$ also satisfies
          (2). Hence, the translation surface $X(K,p)$ can be chosen
          not to depend on $p$. 
          Since all $T_n$ are isometric, the surface $X(K)$ can also be 
          chosen to be the same for all $T_n$.
	\end{proof}

Finally, we use the following observation, likely well known to experts.  We include a short proof for completeness. 
	\begin{lemma} \label{lem:dense} Let $X$ be a compact square-tiled translation surface and $\lambda \notin \QQ$. Then there exists $L>0$ such that any straight line of length $L$ and slope $\lambda$ that is disjoint from the singularities of $X$ will intersect all horizontal curves on $X$.
	\end{lemma}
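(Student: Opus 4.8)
The plan is to combine the product structure of the horizontal cylinders of $X$ with minimality of the straight-line flow in direction $\lambda$. First I would record the structural facts. Since $X$ is square-tiled it carries a translation covering onto a torus, branched over a single point, so a closed straight geodesic of $X$ maps to a closed straight geodesic of that torus and hence has rational slope; in particular no saddle connection of $X$ has slope $\lambda$. Being square-tiled, $X$ is a Veech surface, so by the Veech dichotomy the direction of slope $\lambda$ is uniquely ergodic, and in particular the slope-$\lambda$ directional flow is minimal (every orbit disjoint from the singularities is dense). Dually, the horizontal direction is rational, hence completely periodic: $X$ is the union of finitely many horizontal cylinders $C_1,\dots,C_k$, finitely many horizontal saddle connections, and the finitely many singular points, and every horizontal closed curve of $X$ is one of the core circles $\RR/w_i\ZZ\times\{y\}$, $y\in(0,h_i)$, of some $C_i$ in its natural flat coordinates $\RR/w_i\ZZ\times[0,h_i]$.

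The geometric point is that, since $\lambda\neq 0$, we may orient each slope-$\lambda$ segment so that its direction vector has positive vertical component; then, in the flat coordinates of any $C_i$, the height coordinate of the segment is strictly increasing on every subarc lying in $C_i$. Consequently, whenever such a segment comes within $\varepsilon$ of the lower boundary of $C_i$ and has at least $\ell_i+1$ units of length remaining (here $\ell_i:=h_i\sqrt{1+\lambda^2}/|\lambda|$ bounds the length needed to cross $C_i$ once), it proceeds to traverse $C_i$ from bottom to top and so meets every horizontal curve contained in $C_i$. Thus it is enough to find $L$ so that every slope-$\lambda$ segment of length $L$ disjoint from the singularities comes within $\varepsilon$ of the lower boundary of each $C_i$ at some parameter at most $L-\max_i\ell_i-1$.

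This last statement I would obtain from minimality. A routine compactness argument shows that, for every $\varepsilon>0$, there is $L(\varepsilon)$ such that every slope-$\lambda$ segment of length $L(\varepsilon)$ disjoint from the singularities is $\varepsilon$-dense in $X$: otherwise one extracts such segments of unbounded length all missing a fixed $\varepsilon$-ball, and a subsequential limit produces a forward slope-$\lambda$ semiorbit avoiding an $\varepsilon/2$-ball, contradicting minimality. Choosing $\varepsilon$ smaller than $\min_i h_i$ and than a small multiple of the shortest horizontal saddle connection, and setting $L:=L(\varepsilon)+\max_i\ell_i+1$, every slope-$\lambda$ segment of length $L$ avoiding the singularities has an $\varepsilon$-dense initial subarc of length $L(\varepsilon)$, hence comes within $\varepsilon$ of the lower boundary of each $C_i$ at some parameter $\le L(\varepsilon)$, and then, by the previous paragraph, traverses each $C_i$. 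So it crosses every horizontal curve of $X$, as required.

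I expect the only genuine obstacle to be making the uniformity step fully rigorous: the assertion that minimality implies that all sufficiently long orbit arcs are $\varepsilon$-dense is completely standard for honest flows on compact spaces, but the slope-$\lambda$ flow on $X$ is only defined off the singular leaves, so one must be a little careful about trajectories that approach a singularity. This is handled by citing the usual account of minimal directional flows on translation surfaces, or by observing that, since there are no slope-$\lambda$ saddle connections, the finitely many slope-$\lambda$ separatrices are themselves dense, so that a subsequential limit of singularity-avoiding segments limits onto a dense set even if it runs into a singularity — which is all the contradiction uses.
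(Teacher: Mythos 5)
Your argument is correct in outline and is built on the same core compactness mechanism as the paper (extract a subsequential limit of increasingly long singularity‑avoiding slope‑$\lambda$ segments and use density of half‑leaves to get a contradiction), but the route you take to convert ``density of a long segment'' into ``crosses every horizontal curve'' is genuinely different. The paper works directly with the limit leaf $\mu$: it picks a compact subinterval $\mu'$ that already meets every horizontal curve and is bounded away from the singularities, and then observes that the nearby segments can be isotoped to $\mu'$ along horizontal leaves, which preserves the property of hitting every horizontal curve. You instead prove a cleaner intermediate statement (long segments are $\varepsilon$‑dense, uniformly) and then pass through the horizontal cylinder decomposition: $\varepsilon$‑density forces the segment into each cylinder, where the monotone vertical coordinate forces a full crossing. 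Both approaches work; yours isolates a reusable quantitative lemma, the paper's avoids the cylinder bookkeeping.

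Two small points to tidy. First, the appeal to the Veech dichotomy is unnecessary overkill: once you have observed there is no slope‑$\lambda$ saddle connection, Keane's criterion already gives minimality (density of every half‑leaf), which is all you use. Second, the cylinder‑traversal step has a small gap as written: if the $\varepsilon$‑dense point $p$ on the segment is already \emph{inside} $C_i$ at height $h\in(0,\varepsilon)$, then the forward trajectory from $p$ traverses heights $(h,h_i)$ and misses the horizontal curves at heights in $(0,h)$; those are only picked up by the part of the segment \emph{before} $p$, and in the worst case ($p$ at the start of the segment) that backward length is unavailable. This is easy to repair: target the midheight point $q_i$ of each $C_i$ rather than the lower boundary, take $\varepsilon < \min_i h_i/4$, let the $\varepsilon$‑dense window be the middle (not initial) subarc of the segment, and set $L := L(\varepsilon) + 2\max_i \ell_i$, so that the point within $\varepsilon$ of $q_i$ has at least $\ell_i$ of length available both forward and backward, guaranteeing the crossing exits both the top and the bottom of $C_i$.
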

	
\begin{proof}
Since $\lambda$ is irrational, every half-leaf of the foliation with slope $\lambda$ is dense in $X$.   Supposing for contradiction that we can find a sequence of segments $L_n$ of length $\ell_{n} \to \infty$ so that each $L_n$ misses some horizontal curve $c_n$. We parametrize the segments $L_n$ by arclength so that we see the segments $L_{n}$ as continuous maps $[-\ell_{n}/2, \ell_{n}/2] \rightarrow X$. After possibly taking a subsequence, we can assume that these $L_n$ converge uniformly on any segment of $\mathbb{R}$ to an infinite leaf $\mu$ of slope $\lambda$. 

Since  $\mu$ is dense and of constant slope, some compact subinterval $\mu' \subset \mu$ will intersect all horizontal curves, and we may find such a $\mu'$ which does not contain any singularity of $X$. Thus, by compactness, there exists $\epsilon>0$ such that the $\epsilon$-neighborhood $N$ of $\mu'$ is disjoint from the singularities of $X$. Since the $L_n$ converge to $\mu$ locally on compact subintervals, the intersection $L_n\cap N$ is non-empty for sufficiently large $n$. Now any connected component of $L_n\cap N$ which does not contain the endpoints of $L_n$ 
 is also a line of slope $\lambda$, and must intersect all horizontal curves because $\mu'$ can be isotoped along horizontal lines to a subset of $L_n \cap N$. But this contradicts the definition of $L_n$, completing the proof.
\end{proof}

\begin{proof}[Proof of Theorem~\ref{irrationalimpliesparabolic}, irrational slope case]

		Suppose $\rho(f)$ is a line segment of irrational slope $\lambda$ and length $\ell>0$. 
		By Theorem~\ref{thm:hyperbolic-characterisation}, the homeomorphism $f$ is not hyperbolic, as the rotation set has empty interior. Thus, to prove the theorem, it suffices to show $f$ is not elliptic. Let $\alpha$ be the standard horizontal curve on $T$. Supposing for contradiction that $f$ is elliptic, there exists some $K$ such that $d^\dagger(\alpha, f^n \alpha)\leq K$ for every $n$.
			
		Consider the finite-sheeted covers $T_n \to T$ and metrics as described above.  Let $D = [0, 1] \times [0,1]$ denote the 
		standard fundamental domain for $T=\RR^2/\ZZ^2$. We also view $T_n$ as the standard quotient $\RR^2 / \ZZ^2$ because $T_n$ is isometric to $T$. We choose $D_n$ to be the subset $[0,1/n]\times [0,1/n]$ in $\RR^2$.  Fix a lift $\tilde{f}\colon \RR^2 \to \RR^2$ of $f\colon T \to T$. Now we may also construct $\tilde{f}_n \colon \RR^2 \to \RR^2$ defined by \[\tilde{f}_n(x)=\frac{1}{n} \tilde{f}(nx),\]
which descends to a map $\hat{f}_n \colon T_n \to T_n$, which is a lift of $f$.

Let $L> 0$ be the length guaranteed by Lemma~\ref{lem:dense} applied to the
surface $X=X(K)$ guaranteed by Lemma~\ref{lem:goodfinitecover} and slope $\lambda$, i.e.
any straight line on $X$ of slope $\lambda$ and length $L$ must intersect every horizontal curve on $X$.
		
		Now fix some $k \in \NN$ such that $k\ell >L$.  
By Lemma \ref{lem:MZ} we have that $\frac{1}{n} \tilde{f}^n (D)$ converges to $\rho(f)$ in the Hausdorff topology as $n \to \infty$. 
		 By definition we have that $\tilde{f}_n^{kn} (D_n)=\frac{1}{n} \tilde{f}^{kn} (D)$, and hence $\tilde{f}_n^{kn} (D_n)$ also converges to $k\rho(f)$. 
Let $\overline{k\rho}_n \subset T_n$ denote the projection of $k\rho(f)$, where we drop $f$ from the notation for convenience.

		We claim now that for each $n$ there is a straight line $a_n$ in $T$ (disjoint from the standard horizontal curve $\alpha$), with lift $\hat{a}_n$ in $T_n$, such that the Hausdorff distance between $\hat{f}_n^{kn}(\hat{a}_n)$ and $\overline{k\rho}_n$ tends to $0$ as $n$ tends to $\infty$. To find such a straight line, pick two points $q_{n}$ and $q'_{n}$ in  ${\hat{f}}_n^{kn}(D_n)$ which are close to the two ends of $\overline{k\rho}_n$. Then take as $\hat{a}_n $ the straight line of $D_{n}$ which joins the points $\hat{f}^{-kn}(q_{n})$ and $\hat{f}^{-kn}(q'_{n})$, and $a_n$ its projection to $T$. 
		
		Now define simple closed curves $\alpha_n$ on $T$ such that $\alpha_n$ contains $a_n$ but is also isotopic to and disjoint from the standard horizontal curve $\alpha$. 
		For large enough $n$ we have that the Hausdorff distance between $\hat{f}_n^{kn}(D_n)$ and $\overline{k\rho}_n$ on $T_n$ tends to $0$ as $n$ tends to $\infty$. Write $\hat{\alpha}_n$ for the lift of $\alpha_n$ to $T_n$ that contains $\hat{a}_n$, and let $\hat\alpha$ be an arbitrary lift of $\alpha$ to $T_n$.

		By our initial assumption on $f$ we have $d^\dagger(\alpha,f^{kn}\alpha_n)\leq K+1$ for every $n>0$. By Lemma~\ref{lem:unwrapn} we also have $d^\dagger(\hat\alpha,\hat{f}_n^{kn} \hat\alpha_n)\leq K+1$ for all $n>0$ in $\cd(T_n)$. For any
point $p$ disjoint from both $\hat\alpha$ and $\hat{f}_n^{kn} \hat\alpha_n$, applying Lemma~\ref{lem:goodfinitecover}, we find that there are branched covers $\pi_n\colon X \to T_n$, so that
$\hat\alpha$ and $\hat{f}_n^{kn}\hat\alpha_n$ admit disjoint lifts to $X$.

By our choices, there is a lift of $\hat{f}_n^{kn}\hat{a}_n$ to $X$ which is disjoint from some horizontal curve in $X$ for each $n$. But $\hat{f}_n^{kn}\hat{a}_n$ converges to $k\rho(f)$, which has slope $\lambda$ and length $k\ell >L$.
By choosing the branch point $p$ outside a small embedded regular neighbourhood of $k\rho(f)$, we can guarantee that lifts of  $\hat{f}_n^{kn}\hat{a}_n$ to $X$ contain segments which converge to a segment with slope $\lambda$ and length $k\ell >L$ as well. 
For sufficiently large $n$ this contradicts Lemma~\ref{lem:dense}.
			\end{proof}
	
\subsection{Parabolics with a singleton rotation set}  \label{sec:para_examples}
	
	It is easy to produce examples of elliptic isometries with a given singleton rotation vector.  Given $(a, b)$, the translation $(x, y) \mapsto (x+a, y+b)$ is a lift of a torus map with rotation set $\{ (a,b) \}$.   There are many dynamically more interesting examples as well. For instance, Koropecki and Tal constructed a homeomorphism of the torus whose rotation set is reduced to $\{ (0,0) \}$ but which has unbounded orbits in every direction in the universal cover (see \cite{KT}). Similar examples can be constructed with any rational one-point rotation set. We believe that their example acts as a parabolic isometry of $\cd(S)$.	
	
	In this section we give a construction that  produces parabolic isometries with singleton rotation sets. 
	\begin{proposition}\label{prop:parabolic-with-point-rot}
   There are homeomorphisms $f$ that act parabolically on $\cd(T)$ with 
           $\rho(f) = \{(0,0)\}$. 
	\end{proposition}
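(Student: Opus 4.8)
The plan is to realise $f$ as an infinite composition $f = \lim_{n\to\infty} g_n\circ\cdots\circ g_1$ of isotopically trivial homeomorphisms of $T$, each fixing a chosen basepoint $q$ and each $C^0$-small (say $d_{C^0}(g_k^{\pm1},\mathrm{id})\le 2^{-k}$), so that the limit exists, is a homeomorphism, and --- concatenating the supporting isotopies --- lies in $\Homeo_0(T)$ and fixes $q$. The $g_k$ will be taken to be (roots of) point-pushes of $q$ along loops that are filling in progressively larger finite covers of $T$, carried out at progressively finer scales, as in the flexible construction of Lemma~\ref{lem:pa-root}. Two design constraints must be respected. First, the union of the supports must be large enough that $T\setminus\overline{\bigcup_k \supp(g_k)}$ contains no essential simple closed curve; otherwise $f$ fixes such a curve and is elliptic, and in particular the supports cannot be disjoint disks (since the complement of disjoint disks carries $\pi_1$). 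Second, the ``rotational parts'' of the $g_k$ must cancel so that $\rho(f)=\{(0,0)\}$.

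Granting such an $f$, I observe that \emph{parabolicity reduces to non-ellipticity}. The cleanest route to $\rho(f)=\{(0,0)\}$ is via Lemma~\ref{lem:MZ}: choosing lifts $\tilde g_k$ that fix a common lift $\tilde q$ of $q$, the limit $\tilde f$ fixes the whole lattice $\tilde q+\ZZ^2$ and has uniformly bounded displacement, and one arranges the pushing loops so that every $\tilde f$-orbit in $\RR^2$ grows sub-linearly (for instance by taking them null-homotopic in $T$, so their lifts bound compact bubbles that each piece moves only within itself); then $\tfrac1n\tilde f^n(D)\to\{0\}$ in Hausdorff distance. Once $\rho(f)$ has empty interior, Theorem~\ref{thm:hyperbolic-characterisation} gives that $f$ does not act hyperbolically, i.e.\ $|f|_{\cd(T)}=0$. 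Hence it suffices to show $f$ is \emph{not} elliptic, i.e.\ that $d^\dagger(\alpha, f^n\alpha)\to\infty$ for a fixed curve $\alpha$ disjoint from $q$.

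For this I would argue by contradiction using the covering machinery of Sections~\ref{sec:estimates} and~\ref{sec:parabolic}. If $d^\dagger(\alpha, f^n\alpha)\le K$ for all $n$, apply Lemma~\ref{lem:goodfinitecover} with this $K$, base $T_1=T$, and branch point $q$, to obtain a genus $\ge 2$ translation surface $X=X(K)$ and a finite cover $\pi\colon X\to T$ (which we may take Galois), branched only at $q$ and a local isometry elsewhere, in which $\alpha$ and every $f^n\alpha$ admit disjoint elevations. Let $\tilde f_X$ be the lift of a suitable power of $f$ fixing $\pi^{-1}(q)$; then $\tilde f_X$ normalises the finite deck group $G$, which acts with bounded orbits on the graph $\cd(X)$. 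Fixing one elevation $\widehat\alpha$ of $\alpha$ and absorbing the deck-group ambiguity in the elevations of $f^n\alpha$ into $\mathrm{diam}_{\cd(X)}(G\widehat\alpha)$ by means of the identity $h\,\tilde f_X^n=\tilde f_X^n h'$ with $h'\in G$, one gets $d^\dagger_X(\widehat\alpha,\tilde f_X^n\widehat\alpha)\le 1+\mathrm{diam}_{\cd(X)}(G\widehat\alpha)$ for all $n$, so $\tilde f_X$ is elliptic on $\cd(X)$. The contradiction must come from the construction: the pushing loops should be chosen so that the lift $\tilde f_X$, or a power of it, is pseudo-Anosov in $X$ --- either relative to the finite set $\pi^{-1}(q)$, via Kra's criterion as in Lemma~\ref{lem:pa-root} together with Theorem~\ref{thm:TL}, or as a mapping class of the closed surface $X$, which suffices via a $1$-Lipschitz map $\cd(X)\to\C(X)$. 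The main obstacle, and the technical heart of the construction, is to engineer $f$ so that this pseudo-Anosov-in-the-cover property holds for all the relevant covers $X(K)$ simultaneously while keeping $f$ isotopically trivial with $\rho(f)=\{(0,0)\}$; in particular one must control the contribution to the mapping class in $X$ of the $C^0$-small but topologically large tail $g_{k+1}\circ g_{k+2}\circ\cdots$. A clean way to sidestep this would be to build $f$ self-similarly, so that it becomes conjugate in an explicit cover to a standard point-pushing pseudo-Anosov model, making the behaviour in $X$ transparent.
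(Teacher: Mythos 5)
You never actually construct the homeomorphism $f$: the ``proof'' is a list of desiderata for an infinite composition $f = \lim g_n\circ\cdots\circ g_1$, followed by an acknowledgement that the ``technical heart'' --- arranging that the limit acts as a pseudo-Anosov in every relevant branched cover $X(K)$ while keeping $\rho(f)=\{(0,0)\}$ --- is left open. That gap is fatal, and it is not mere bookkeeping. The $C^0$-limit of an infinite composition does not carry a well-defined limit of mapping classes in a fixed finite cover $X$: the class of $f$ on $X$ is not determined by any finite truncation $g_n\circ\cdots\circ g_1$, and $C^0$-smallness of the tail $g_{n+1}\circ g_{n+2}\circ\cdots$ gives no control whatsoever on its topological contribution in $X$. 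Your suggested fix --- take the pushing loops null-homotopic in $T$ so their lifts ``bound compact bubbles'' --- is in direct tension with the need to produce pseudo-Anosovs, since a null-homotopic loop is the opposite of filling; and on the torus with a single marked point there are \emph{no} point-pushing pseudo-Anosovs at all, because the forgetful map $\mathrm{Mcg}(T-q)\to\mathrm{Mcg}(T)$ is an isomorphism --- this is exactly why Lemma~\ref{lem:pa-root} must puncture the torus at two points. Without an explicit map that resolves these tensions, the proposition is not proved.

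The pieces that are sound --- reducing to non-ellipticity via Theorem~\ref{thm:hyperbolic-characterisation}, then deriving a contradiction by lifting curves to the branched cover from Lemma~\ref{lem:goodfinitecover} and using that pseudo-Anosov-like behaviour in $X$ forbids bounded $\cd(T)$-orbits --- do match the general shape of the paper's argument. But the paper's construction is a single explicit homeomorphism, not a limit: take the mapping torus of a Denjoy counterexample $D$ with irrational rotation number $\alpha$, and push points along suspension flow lines with exponentially decaying speed as they approach the suspended Cantor set (Construction~\ref{const:parabolic}). The exponential decay immediately bounds all orbit displacements, giving $\rho(f)=\{(0,0)\}$ (Claim~\ref{claim:rotset}); and the Denjoy dynamics force $f^n(\gamma)$ to shadow longer and longer segments of slope $\alpha$, which by Lemma~\ref{lem:dense} must eventually cross every horizontal curve in $X(K)$, contradicting ellipticity (Claims~\ref{claim:para} and~\ref{claim:techn}). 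In effect the Denjoy construction realises, in one concrete object, exactly the ``self-similar, filling-in-covers'' behaviour that you correctly identify as the missing ingredient of your sketch.
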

	As we remark later, a small modification of the construction can be used to produce singleton rotation sets other than $\{(0,0)\}$.  We first give the construction, then prove it is parabolic.  

\begin{construction}  \label{const:parabolic}
	Start with a standard Denjoy counterexample map
        $D \colon S^1 \to S^1$ with irrational rotation number $\alpha$, produced by blowing up a single orbit of a standard irrational rotation. 
        See \cite{Ghys_circle}
        for an introduction to these examples.  
        We interpret $T = S^1 \times S^1$ as the mapping
        torus of $D$, and let $\varphi_t$ be the suspension
        flow. The time-$1$--map $\varphi_1$ of this flow preserves the
        foliation of $T$ into circles, acting with rotation number
        $\alpha$ on each of them. 
        We think of the circles as being
        \emph{horizontal}, and the flow lines as being
        \emph{vertical}.
	
	Let $K$ be the suspension of the minimal invariant Cantor set
        of $D$; the set $K$ is invariant under the flow $\varphi_t$. Denote
        by $\lambda$ the suspension of the boundary points.
        Now let $J$ denote a closed interval in a horizontal circle whose interior $\mathring{J}$ is a wandering interval for the Denjoy circle map. Then we obtain an embedding $\Phi \colon U = \mathring{J} \times \mathbb{R} \to T$ given by flowing this transverse segment, whose complement is exactly the suspension of the invariant Cantor set of $D$. We denote the coordinates given by this map by $(x,t)$. We now define a map preserving flow lines of $U$ via the map
        $$(x,t) \longmapsto (x, t + \eta(x)e^{-|t|} )$$
        where  $\eta(x) <1$ is a bump function supported on $J$ that is strictly positive on the interior. This map extends to continuously over $K$ as the identity. We call the resulting map $f$. We will prove that it is the desired counterexample. Note that $f$ preserves flow lines of $\varphi_t$, moving along lines montonically and with exponentially decreasing speed as one approaches the set $K$. 
        \end{construction}

	Proposition~\ref{prop:parabolic-with-point-rot} will be a
        consequence of the following two claims:
	\begin{claim}\label{claim:rotset}
          The rotation set of $f$ is $\{(0,0)\}$.
       
	\end{claim}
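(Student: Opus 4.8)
The plan is to reduce the claim to a statement about the \emph{pointwise} rotation set. Recall that $\rho_p(\tilde f)=\{v:\exists\,x,\ \exists\,n_i\to\infty,\ \tfrac1{n_i}(\tilde f^{n_i}(x)-x)\to v\}$ and that, by Misiurewicz--Ziemian \cite{MZ}, $\rho(\tilde f)$ is the convex hull of $\rho_p(\tilde f)$. I will show that for the lift $\tilde f$ of $f$ coming from the isotopy to the identity, and for \emph{every} $x\in\RR^2$, the whole sequence $\tfrac1n(\tilde f^n(x)-x)$ converges to $(0,0)$; this forces $\rho_p(\tilde f)=\{(0,0)\}$ and hence $\rho(f)=\{(0,0)\}$.

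First I set up notation. Lift the suspension flow to a flow $\tilde\varphi$ on $\RR^2$. Writing $s(q):=\eta(x')e^{-|t|}$ for $q=\Phi(x',t)\in U$ and $s\equiv 0$ on $K$, the construction says precisely that $f(q)=\varphi_{s(q)}(q)$; hence $\tilde f(\tilde x):=\tilde\varphi_{s(p)}(\tilde x)$, with $p$ the image of $\tilde x$ in $T$, is a lift of $f$, and via the isotopy $\tilde x\mapsto\tilde\varphi_{\tau s(p)}(\tilde x)$, $\tau\in[0,1]$, it is exactly the lift coming from the isotopy to the identity. Iterating and using that $\tilde\varphi$ is a one-parameter group gives the identity $\tilde f^n(\tilde x)=\tilde\varphi_{\sigma_n(p)}(\tilde x)$ with $\sigma_n(p):=\sum_{k=0}^{n-1}s(f^k(p))\ge 0$.

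Now fix $x$ with image $p$. If $p\in K$, then $s\equiv 0$ along the orbit, so $\tilde f^n(\tilde x)=\tilde x$ and there is nothing to prove. If $p=\Phi(x',t)\in U$, set $t_k:=t+\sigma_k(p)$, so $(t_k)_k$ is non-decreasing with $t_{k+1}-t_k=s(f^k(p))=\eta(x')e^{-|t_k|}$ and $\eta(x')>0$ (because $x'$ lies in the interior of $J$). Were $(t_k)$ bounded, the increments would be bounded below by a positive constant, forcing $t_k\to+\infty$, a contradiction; hence $t_k\to+\infty$ and $s(f^k(p))=\eta(x')e^{-t_k}\to 0$. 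Ces\`aro summation then gives $\tfrac1n\sigma_n(p)=\tfrac1n\sum_{k=0}^{n-1}s(f^k(p))\to 0$. Finally, since the displacement $\|\tilde\varphi_\sigma(\tilde y)-\tilde y\|$ is $\ZZ^2$-invariant and $T$ is compact, it is bounded, say by $C_0$, for $\sigma\in[0,1]$, and since $\tilde\varphi$ is a one-parameter group this yields $\|\tilde\varphi_\sigma(\tilde y)-\tilde y\|\le C_0(\sigma+1)$ for all $\sigma\ge 0$. Therefore $\|\tilde f^n(\tilde x)-\tilde x\|=\|\tilde\varphi_{\sigma_n(p)}(\tilde x)-\tilde x\|\le C_0(\sigma_n(p)+1)=o(n)$, which completes the argument.

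The mechanism is simply that the exponential slowdown of $f$ near $K$ makes the per-step flow speed along any orbit tend to zero, so the total flow time $\sigma_n(p)$ grows sublinearly. The point worth emphasising is that orbits of $f$ genuinely escape to infinity in $\RR^2$ — indeed $\sigma_n(p)\to\infty$ whenever $p\in U$ — so the content is really the sublinear rate, and the bound $\|\tilde\varphi_\sigma(\tilde y)-\tilde y\|\le C_0(\sigma+1)$ is exactly what converts sublinear flow time into sublinear Euclidean displacement. Note that irrationality of the Denjoy rotation number plays no role here; it enters only when proving that $f$ acts parabolically.
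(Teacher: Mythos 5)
Your argument is correct and rests on the same mechanism as the paper's proof: along any orbit starting in $U$ the flow-time increments $s(f^k(p))=\eta(x')e^{-|t_k|}$ decay to zero because $t_k\to+\infty$, so the accumulated flow time is $o(n)$, and both proofs then appeal to Misiurewicz--Ziemian's theorem that $\rho(\tilde f)$ is the convex hull of the pointwise rotation set. Your write-up is a more careful and explicit version of the paper's terse inequality chain (the Ces\`aro step, the monotonicity argument for $t_k\to+\infty$, and the displacement bound $\|\tilde\varphi_\sigma(\tilde y)-\tilde y\|\le C_0(\sigma+1)$ are all left implicit in the paper), but the underlying idea is identical.
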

	\begin{claim}\label{claim:para}
          The homeomorphism $f$ acts parabolically on $\cd(T)$.
	\end{claim}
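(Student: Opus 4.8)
\textbf{Plan for Claim~\ref{claim:para}.} The strategy is to show that the action of $f$ on $\cd(T)$ has no finite-diameter orbit (so $f$ is not elliptic) while the asymptotic translation length vanishes (so $f$ is not hyperbolic). The vanishing of $|f|$ will follow from Claim~\ref{claim:rotset} together with Theorem~\ref{thm:hyperbolic-characterisation}: since $\rho(f)=\{(0,0)\}$ has empty interior, $f$ cannot act hyperbolically. So the entire content is to exhibit an unbounded orbit. A natural candidate for the basepoint is a simple closed curve $\alpha_0$ isotopic to the horizontal foliation circles, chosen so that it crosses the wandering strip $\Phi(U)$ a controlled number of times --- for instance, $\alpha_0$ disjoint from $\lambda$ and meeting $J$ in a single point, or even the suspension curve $\lambda$ itself if one prefers a curve entirely fixed by $f$ (in that case one must instead push a nearby parallel copy). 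I would take $\alpha_0$ to be a horizontal curve meeting the image of the transverse segment $J$ in exactly one point, so that $f^n(\alpha_0)$ is obtained by dragging that one intersection point along a single flow line.

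The key geometric input is a lower bound for $d^\dagger(\alpha_0, f^n(\alpha_0))$ that grows without bound. The idea is that, although $f$ moves points only a bounded distance along each flow line (the displacement is $\eta(x)e^{-|t|}<1$), iterating $f$ drags the intersection point of $\alpha_0$ with the flow line an amount that \emph{does} go to infinity: near $K$ the speed is exponentially small, but the total displacement $\sum_{k} \eta(x)e^{-|t_k|}$ of an orbit $t_{k+1}=t_k+\eta(x)e^{-|t_k|}$ diverges like $\log n$ (this is a standard estimate for orbits of a parabolic-type interval map: $t_n\sim \log n$, so $t_n\to\infty$). Consequently, as $n\to\infty$, the curve $f^n(\alpha_0)$ winds arbitrarily far up the strip $\Phi(U)$, and in particular it separates $\alpha_0$ from more and more of the horizontal circles transverse to the strip. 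I would then lift to the annular cover $A\to T$ associated to the homotopy class of $\alpha_0$, as in Section~\ref{sec:estimates}, and argue that the crossing number $C_{\alpha_0}(f^n(\alpha_0))$ --- or, more robustly, the corresponding quantity after passing to a suitable cover to handle intersection points --- is bounded \emph{below} by a function of $t_n$ tending to infinity, hence $d^\dagger(\alpha_0,f^n(\alpha_0))\to\infty$ by a coarse-Lipschitz argument in the spirit of Lemma~\ref{lem:crossing-estimate} and Theorem~\ref{thm:TL}.

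Concretely, the cleanest route is probably the covering-space criterion: in the $\ZZ$-cover $\check T = \RR/\ZZ\times\RR$ dual to $\alpha_0$, a lift $\check f$ of $f$ fixes the $\RR$-coordinate up to the bounded per-step displacement, but an elevation of $f^n(\alpha_0)$ has endpoints-of-an-arc picture whose vertical extent is comparable to $t_n\to\infty$; then one invokes Lemma~\ref{lem:small-diameter-one-cover} (after passing to $\cd$ of a genus $\geq 2$ branched cover, using the torus's single intersection point) to conclude that if $d^\dagger(\alpha_0,f^n(\alpha_0))$ were bounded by some $K$, the two curves would admit disjoint elevations in a fixed finite cover --- contradicting the divergence of the vertical span. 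Alternatively, one can work directly on $T$: the set $\lambda$ (suspension of the Cantor boundary) is $f$-invariant and $f$ is isotopic to the identity rel $\lambda$ in a way that is a nontrivial "point-push" type element along the wandering strip; but $\lambda$ is not a simple closed curve, so this requires more care and I would only use it as intuition.

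\textbf{Main obstacle.} The hard part is promoting the qualitative statement "$f^n(\alpha_0)$ drifts to infinity along one flow line" into a genuine quantitative lower bound on $d^\dagger$, because the drift is only logarithmic in $n$ and the curve $f^n(\alpha_0)$ stays within the single thin strip $\Phi(U)$ rather than wrapping around $T$; so one cannot simply read off a large intersection number on $T$ itself. The resolution I anticipate is to choose the comparison curves carefully --- e.g. many disjoint horizontal circles $c_1,\dots,c_m$ crossing the strip, whose preimages in the annular cover $A$ are horizontal circles at heights $0,1,\dots,m-1$ --- and to observe that $f^n(\alpha_0)$, viewed in $A$, must cross all circles at heights up to roughly $t_n$; then a projection-to-the-line or Bass--Serre-tree argument bounds $d^\dagger$ from below by (a constant times) this number. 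Verifying that the divergence $t_n\to\infty$ is uniform over the relevant orbits --- i.e. that the single dragged intersection point really escapes every compact part of the strip, with the escape time controlled independently of small perturbations needed to make the curves transverse --- is the technical crux, and I would handle it using the explicit form $(x,t)\mapsto(x,t+\eta(x)e^{-|t|})$ and the elementary fact that $\sum e^{-t_k}$ diverges along its own orbit.
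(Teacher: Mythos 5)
Your opening moves are right, and in fact coincide with the paper's: non-hyperbolicity follows immediately from Claim~\ref{claim:rotset} and Theorem~\ref{thm:hyperbolic-characterisation}, so the whole task is to rule out ellipticity, i.e.\ to show the orbit of a well-chosen horizontal curve in $\cd(T)$ is unbounded. You are also right that the argument should proceed by contradiction, passing via Lemma~\ref{lem:small-diameter-one-cover} (the paper packages this as Lemma~\ref{lem:goodfinitecover}) to a fixed finite branched cover $X$ in which $\gamma$ and $f^n(\gamma)$ would have disjoint elevations for all $n$. That much of the scaffolding matches the paper's proof.

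The gap is in the step you flag as ``the technical crux,'' and your proposed resolution does not work. You want to deduce that $d^\dagger(\alpha_0, f^n\alpha_0)\to\infty$ from the fact that $C_{\alpha_0}(f^n\alpha_0)\to\infty$, i.e.\ from the growing vertical span of $f^n\alpha_0$ in the annular cover, via ``a projection-to-the-line or Bass--Serre-tree argument.'' But Lemma~\ref{lem:crossing-estimate} only goes one way: $d^\dagger(\alpha,\beta)\leq C_\alpha(\beta)+1$. There is no lower bound of $d^\dagger$ by crossing number; a curve can cross arbitrarily many lifts of $\alpha_0$ in the annular cover while staying at $\cd$-distance $2$ from $\alpha_0$ (take $\alpha_0$ plus a thin ``finger'' that spirals up the annular cover $k$ times inside a narrow band --- a vertical curve avoiding the band is adjacent to both in $\cd(T)$). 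Equivalently, ``divergence of the vertical span'' is simply not in contradiction with disjoint elevations in a fixed cover: two closed curves homotopic to the core of an annulus can be disjoint regardless of how far one of them wanders vertically. So the contradiction you announce at the end of your ``cleanest route'' paragraph is unsubstantiated.

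The missing idea is directional, not quantitative: what matters is not that a piece of $f^n(J)$ is long, but that long subsegments of $f^n(J)$ converge (in Hausdorff distance) to \emph{straight segments of the irrational slope} $\alpha$. This is the paper's Claim~\ref{claim:techn}, proved using the bounded-deviation property of the Denjoy circle map ($\tilde D^n(\tilde a_0)=n\tilde\alpha+B_n$ with $(B_n)$ bounded) together with the shrinking of the wandering gaps. Once one has straight segments of slope $\alpha$ of length $\to\infty$, the punchline is Lemma~\ref{lem:dense}: because the foliation of slope $\alpha$ on the fixed square-tiled surface $X$ is minimal, a sufficiently long straight segment of slope $\alpha$ in $X$ meets every horizontal curve. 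But the elevation of $f^{k_n}(\gamma)$ in $X$ is supposed to be disjoint from the elevation $\tilde\gamma$ of the horizontal curve $\gamma$ --- contradiction. In short: your proposal correctly senses that it should pass to a cover and find an obstruction to disjointness, but it leans on a nonexistent lower bound for $d^\dagger$ in terms of crossing number, and it is missing the irrational-slope/density mechanism that actually produces the contradiction.
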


	\begin{proof}[Proof of Claim~\ref{claim:rotset}]
          Denote by $\widetilde{\varphi}_t,\widetilde{f}$ respective lifts of $\varphi_{t},f$ to
          the universal cover $\RR^2$ of $T$.  For any $p= (\tilde{x},t)  \in \RR^2$ in (a lift of) $U$ we have using
          growth condition ($*$) above that
          
          \[ \lim_{i \to \infty} \frac{\widetilde{f}^{n_i}(p)-p}{n_i} <  \lim_{i \to \infty} \frac{\widetilde{f}^{n_i}(p_k)-p}{n_i} = \lim_{i \to \infty} \frac{\widetilde{f}^{n_i}(p_k)-p_k}{n_i}  < e^{-t_k} \to 0, \]
         where $p_k = \widetilde{f}^{k}(p) = (\tilde{x},t_k)$ denotes an iterate of $p$ so that $t_k \to \infty$ and the claim follows.
	\end{proof}

	\begin{proof}[Proof of Claim~\ref{claim:para}] 
          Given Claim~\ref{claim:rotset} and
          Theorem~\ref{thm:hyperbolic-characterisation}, it follows
          that the homeomorphism is not hyperbolic. 
          
          We thus have to exclude the case that it is elliptic.  To this end, suppose
          that it were. Let $\gamma$ be a horizontal curve on $T$
          containing the segment $J$ from property (4). By hypothesis, the sequence 
          $( d^\dagger(\gamma , f^n(\gamma)))_{n} $ is bounded.
          
         By Lemma~\ref{lem:goodfinitecover}, there exists a translation surface $X$ and a finite cover $ X \to T$ of bounded order, branched at a single point $p$ (independent of $n$), so that, for any $n$, the curves $\gamma, f^n(\gamma)$ admit disjoint lifts $\tilde{\gamma}$ and $\tilde{\beta}_{n}$ to $X$ and the cover is locally isometric. Note that we are free to pick $p$ outside $\gamma$ and $J \times \mathbb{R}$.
		
          By Lemma~\ref{lem:dense}, there is a number $L>0$ so that any segment of slope $\alpha$ and length greater than $L$ of $X$ intersects every horizontal curve. 
          
          Claim~\ref{claim:para} will be a consequence of the following technical claim.
          
          \begin{claim}\label{claim:techn}
          There exists a sequence $k_{n} \rightarrow +\infty$ of integers and a sequence $(L_{n})$ of straight segments of $T$ of slope $\alpha$ such that
          \begin{enumerate}
          \item for any $n \geq 0$, there exists a subsegment $\sigma_{n}$ of $f^{k_{n}}(J \times \left\{ 0 \right\})$ such that $d_{Hausdorff}(L_{n},\sigma_{n}) 
          \rightarrow 0$. 
          \item the length of $L_{n}$ tends to $+\infty$.
          \item for any $n \geq 0$, the segment $L_{n}$ does not meet the singularity $p$. 
          \end{enumerate}
          \end{claim}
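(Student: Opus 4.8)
The plan is to follow the iterated images of the transverse segment $J\times\{0\}$ through the flow--time coordinate, to observe that for large $k$ almost all of $f^{k}(J\times\{0\})$ accumulates, with transverse error tending to $0$, along an ever longer segment of the frontier leaf of the wandering strip $U$, and finally to recognise such a leaf segment as a straight segment of slope $\alpha$ once $T$ carries the flat structure transported from $R_\alpha$.

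\emph{Reparametrisation.} Since $f$ preserves the flow lines of $\varphi_t$ and moves each of them by the monotone map defining $f$ on $U$, we may write $f^{k}(\Phi(x,0))=\Phi(x,t_k(x))$ with $t_0\equiv 0$ and $t_{k+1}(x)=t_k(x)+\eta(x)e^{-t_k(x)}$. The function $t_k$ is continuous on $J$, vanishes on $\partial J$, and is monotone in $\eta(x)$; writing $x_0$ for a point where $\eta$ is maximal and $M_k\coloneq t_k(x_0)=\max_J t_k$, the substitution $u_k=e^{t_k}$ turns the recursion into $u_{k+1}\approx u_k+\eta$, so that $M_k\to\infty$ (indeed $M_k\sim\log k$). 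I would assume, as one may in Construction~\ref{const:parabolic}, that $\eta$ is unimodal.

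\emph{The good subsegments.} Choose $C_k\to\infty$ with $C_k=o(M_k)$, let $x_\ell$ be the left endpoint of $J$, and let $J_k=[x_\ell,x_\ell+\delta_k]$ be the component of $\{x\in J:t_k(x)\le M_k-C_k\}$ at $x_\ell$. From $t_k(x)=\log(1+k\eta(x))+o(1)$ one finds $\eta(x_\ell+\delta_k)\asymp e^{-C_k}\max\eta\to 0$, hence $\delta_k\to 0$ by continuity of $\eta$. Put $\sigma_k\coloneq f^{k}(J_k\times\{0\})$, a subsegment of $f^{k}(J\times\{0\})$, and let $\ell_k\coloneq\{\varphi_s(x_\ell,0):0\le s\le M_k-C_k\}$ be the corresponding segment of the frontier leaf of $U$ through $x_\ell$ (a component of the set $\lambda$); its length tends to $\infty$. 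Because the first--return map of $\varphi$ to the horizontal circle is the Denjoy map $D$, which has uniformly bounded distortion on the wandering intervals, the distance between $\varphi_s(x,0)$ and $\varphi_s(x_\ell,0)$ stays $O(\delta_k)$ for all $x\in J_k$ and all $s\ge 0$; hence $d_{\mathrm{Haus}}(\sigma_k,\ell_k)\to 0$. Relabelling along a subsequence produces the $k_n\to\infty$ of the claim.

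\emph{Straightness and the branch point.} To see $\ell_k$ as a straight slope--$\alpha$ segment of $T$, I would fix the flat (translation) structure on $T$ obtained by transporting the flat structure of the standard torus through the semiconjugacy $h$ of $D$ to $R_\alpha$: the suspension $H\colon T\to T_{\mathrm{flat}}$ intertwines $\varphi_t$ with the linear slope--$\alpha$ flow and collapses $U$ onto a single straight leaf, onto which $\lambda$ maps, and one may arrange the identification so that $\gamma$ is horizontal. Then each $\ell_k$ is an honest straight segment of slope $\alpha$, and setting $L_n\coloneq\ell_{k_n}$ gives properties (1) and (2). For (3), recall that $p$ was chosen outside $\gamma\cup(J\times\RR)$, so $p\notin f^{k_n}(J\times\{0\})\supset\sigma_n$; since $p$ is a single point and $L_n$ is a straight slope--$\alpha$ segment Hausdorff--close to $\sigma_n$, replacing $L_n$ by a parallel straight slope--$\alpha$ segment of the same length at distance $<\tfrac12 d(p,\sigma_n)$ from it keeps (1),(2) and yields (3) (alternatively one invokes the freedom in the choice of $p$ afforded by Lemma~\ref{lem:goodfinitecover}). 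The dynamical part above is routine once the recursion is written as $u_{k+1}\approx u_k+\eta$; the delicate point, and the one I expect to require the most care, is the last step: one cannot straighten the whole flow $\varphi_t$ (an irrational linear flow is minimal, whereas $\varphi_t$ has a wandering strip), so only the single leaf $\lambda$ is being rendered geodesic, and one must check that the structure transported through $H$ is a bona fide translation structure on $T$ compatible with the cover $X\to T$ of Lemma~\ref{lem:goodfinitecover} and with the statement of Lemma~\ref{lem:dense}.
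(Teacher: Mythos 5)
Your proposal takes a genuinely different route from the paper's: you track a shrinking sub-interval $J_k$ at the left endpoint of $J$ and compare $f^{k}(J_k\times\{0\})$ with a segment $\ell_k$ of the frontier leaf $\lambda$ of the wandering strip, whereas the paper works with the full curve $f^{k_n}(J\times\{0\})$ but restricts it to a height window $[k'_n,k'_{n+1}]$ chosen so that the Denjoy rotation-number corrections $B_{k'_n}=\tilde D^{k'_n}(\tilde a_0)-k'_n\alpha$ converge, and then takes $L_n$ to be an explicit Euclidean chord of slope $\alpha$ between those heights.

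Two remarks. A minor one first: "bounded distortion" of the Denjoy map is not available (such maps are typically $C^1$ but not $C^{1+\mathrm{BV}}$) and is also unnecessary; for $s$ large, $\Phi(J_k,s)$ is already confined to the wandering interval $D^{\lfloor s\rfloor}(J)$ which has small diameter, and for $s$ in a bounded range one uses $\delta_k\to 0$ together with uniform continuity of $\Phi$ on compacta.

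The genuine gap is the straightness step. In the flat structure on $T=\RR^2/\ZZ^2$ that the claim and Lemma~\ref{lem:dense} require (the one for which the suspension circles are Euclidean-horizontal), a lift of the frontier leaf $\lambda$ passes through the points $(\tilde D^m(\tilde a_0),m)=(m\alpha+B_m,m)$, and for a Denjoy map the bounded sequence $(B_m)$ does not converge. Thus $\lambda$ is \emph{not} a straight line of slope $\alpha$: it oscillates about one by an amount bounded away from zero, so the segments $\ell_k$ you set equal to $L_n$ are simply not straight, and property (1) of the claim fails for them. Your proposed remedy --- transporting the flat structure through the semiconjugacy $H$ --- cannot be made to work: $H$ collapses the two-dimensional set $\overline U$ onto a single slope-$\alpha$ line, so it is not a homeomorphism and pulls back the flat metric to a degenerate structure; nor can any circle-preserving identification of the mapping torus with $T$ straighten $\lambda$, since any such identification places $\lambda$ at horizontal position $\tilde a_m$ at integer height $m$, and $\tilde a_m - m\alpha$ is genuinely non-constant. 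You flag this as "the delicate point", but the fix you suggest fails rather than merely needing care. The paper sidesteps it precisely by choosing the sub-sequence $(k'_n)$ so that $(B_{k'_n})$ converges and $k'_{n+1}-k'_n\to\infty$, defining $L_n$ as the genuine slope-$\alpha$ segment from $(\tilde a_{k'_n},k'_n)$ to $(\tilde a_{k'_n}+(k'_{n+1}-k'_n)\alpha,k'_{n+1})$, and taking $\sigma_n$ to be the connected piece of $f^{k_n}(J\times\{0\})$ over the height range $[k'_n,k'_{n+1}]$, where the wandering strip is uniformly thin.
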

          
          Before proving this claim, let us use it to prove Claim \ref{claim:para}. For any $n \geq 0$, let $\tilde{\sigma}_{n}$ be the lift of $\sigma_{n}$ to $X$ which is contained in $\tilde{\beta}_{n}$, $\tilde{L}_{n}$ be the lift of $L_{n}$ which is close to $\tilde{\sigma}_{n}$. Taking a subsequence if necessary, the sequence $\tilde{L}_{n}$ converges (on compact subsegments) 
 to a straight line $\tilde{L}_{\infty}$ of infinite length (like in the proof of Lemma \ref{lem:dense}). Take a subsegment $L'_{\infty}$ of $L_{\infty}$ which does not meet the singularities of the covering map and whose length is greater than $L$. Take $\epsilon >0$ such that the $\epsilon$-neighbourhood of $L'_{\infty}$ does not meet the singularities of the covering map and such that any connected set at Hausdorff distance at most $\epsilon$ from 
$L'_\infty$ meets any horizontal curve. 

For $n \geq 0$ sufficiently large, some connected subset $\tilde{\sigma}_{n}'$ of $\sigma_{n}$ is at Hausdorff distance at most $\epsilon$ from $L'_{\infty}$ and hence meets any horizontal curve. But this is not possible as, by definition of $X$, $\tilde{\sigma}_{n} \subset \tilde{\beta}_{n}$ is disjoint from the horizontal curve $\tilde{\gamma}$.
          	\end{proof}
	
	\begin{proof}[Proof of Claim \ref{claim:techn}]
	Fix $n \geq 0$. Let $C_{n}=[a_{n}, b_{n}]=[D^{n}(a_{0}),D^{n}(b_{0})]$ be the connected component of the complement in $\mathbb{S}^1 \times \left\{ 0 \right\}$ of the minimal Cantor set of $D$ which contains $J\times \left\{ n \right\}$. As the $C_{n}$'s are pairwise disjoint and $\mathbb{S}^1 \times \left\{ 0 \right\}$ has finite length, the diameter of $C_{n}$ tends to $0$ as $n \rightarrow +\infty$.

Let $\tilde{D}\colon \mathbb{R} \rightarrow \mathbb{R}$ be a lift of $D$ to the universal cover of $\mathbb{S}^1=\mathbb{R} / \mathbb{Z}$. Recall that the rotation number of $D$ is $\alpha$. Then it is standard that, for any $n$,
$$\tilde{D}^{n}(\tilde{a}_{0})=n\tilde{\alpha}+B_{n},$$
where $(B_n)$ is a bounded sequence and $\tilde{a}_0$ and $\tilde{\alpha}$ are respective lifts of $a_0$ and $\alpha$ to $\mathbb{R}$ (see \cite[Section 5]{Ghys_circle}). Take a sequence $(k'_{n})_{n}$ of integers such that the sequence $(B_{k'_{n}})$ converges and $k'_{n+1}-k'_{n} \rightarrow +\infty$.  Observe that the sequence $(D^{k'_{n+1}}(a_{0})-D^{k'_{n}}(a_{0})-(k'_{n+1}-k'_{n})\alpha)$ converges to $0$. 
	
	Take $k_{n}$ sufficiently large so that the set $f^{k_{n}}(J\times \left\{ 0 \right\})$ meets $J \times [k'_{n+1},+\infty)$. Let $\tilde{\mathcal{L}}_{n}$ be the line of the universal cover $\mathbb{R}^2$ of $T$ which joins $(a_{k'_n},k'_{n})$ to $(a_{k'_{n+1}}, k'_{n+1})$ and $\tilde{L}_{n}$ be either the line which joins $(a_{k'_{n}},k'_{n})$ to $(a_{k'_{n}}+(k'_{n+1}-k'_{n}) \alpha,k'_{n+1})$ or a tiny translate of this line if it meets some lift of the point $p$. Denote by $\mathcal{L}_{n}$ and $L_{n}$ the respective projections of $\tilde{\mathcal{L}}_{n}$ and $\tilde{L}_{n}$ to $T$.
	
	As the sequence $(D^{k'_{n+1}}(a_{0})-D^{k'_{n}}(a_{0})-(k_{n+1}-k_{n})\alpha)$ converges to $0$, the Hausdorff distance between $\mathcal{L}_{n}$ and $L_{n}$ tends to $0$. As the set $f^{k_{n}}(J\times \left\{ 0 \right\})$ meets both $J \times \left\{ 0 \right\}$ and $J \times \left\{ k'_{n+1} \right\}$ and is contained in $J \times \mathbb{R}$, there exists a connected component $\sigma _{n}$ of $f  ^{k_{n}}(J \times \left\{ 0 \right\}) \cap J \times [k'_{n},k'_{n+1}]$ which meets both $J \times \left\{ k'_{n} \right\}$ and $J \times \left\{ k'_{n+1} \right\}$. As the diameter  of $C_{n}$ tends to $0$ as $n$ tends to $+\infty$, the Hausdorff distance between $\mathcal{L}_{n}$ and $\sigma_{n}$ tends to $0$. Finally, the Hausdorff distance between  $L_{n}$ and $\sigma_{n}$ tends to $0$.
	\end{proof}

One can alternatively show the map $f$ from Construction \ref{const:parabolic} is parabolic with the following argument using bicorns.  

\begin{proof}[Alternative proof of Claim \ref{claim:para}]
Recall 
we have identified $T$ with $[0,1] \times [0,1]$ with the top and bottom edges identified by the Denjoy map $D$, and the left and right by rigid translation.  
Let $\gamma = [0,1] \times \{0\}$ be the standard horizontal curve on $T$, let $J \subset \gamma$ be the interval used in the definition of $f$; we may parametrize $\gamma$ so that the left endpoint of $J$ is 0.  Choose a horizontal curve $\gamma' = [0,1] \times \{\epsilon'\}$ slightly above $\gamma$ so that $\gamma'$ is transverse to each curve $f^n(\gamma)$ (each iterate will have only countably many points tangent to a horizontal curve, so we may find such a $\gamma'$ as close as we wish to $\gamma$).  

Let $J' = [0, \epsilon] \subset J$ be a subinterval on which the bump function in the construction of $f$ is monotone increasing.  
For each $n>0$ large enough that $f^n(J')$ intersects $\gamma$ in at least two points, let $j_n \subset f^n(J')$ be the subarc bounded by the leftmost intersection point on $\gamma$ of $f^n(J') \cap \gamma'$  (which by construction lies in $[0,\epsilon] \times \{\epsilon'\}$), and the rightmost intersection point of $f^n(J') \cap \gamma'$ on $\gamma'$.  
 Let $c_n$ be the union of $j_n$ and the short segment of $\gamma'$ containing $0$ which connects them. This is a closed curve.  
The isotopy class $[c_n]$ represents a vertex in $\C(T)$; which is simply the Farey graph with vertex set $\QQ \cup \{\infty\}$ each point representing the slope of the curve.   By construction, as $n \to \infty$, the average slope of $j_n$, and hence $c_n$, approaches $\alpha$.   Thus, necessarily $[c_n]$ eventually leaves each compact set of $\C(T)$.  

We now show that this means $f$ cannot act elliptically on $\cd(T)$.  Suppose for contradiction that it did, i.e. $d^\dagger(\gamma, f^n(\gamma))\leq K$ for some constant $K\geq 0$.  Then $d^\dagger(\gamma', f^n(\gamma))\leq K+1$.  Let $P_n$ be a finite set of points so that each complementary region of $\gamma' \cap f^n(\gamma)$ contains at least one point of $P_n$.  Then 
$\gamma_n$ and $f^n(\gamma)$ are in minimal position in $T-P_n$, so by \cite[Lemma~3.4]{BHW} the distance in $\mathcal{C}^s(T-P)$ between $[\gamma_n]_{T-P_n}$ and $[f^n(\gamma)]_{T-P_n}$ is at most $K+1$ also. 

Since $c_n$ is a union of a subarc of $\gamma'$ and of $f^n(\gamma)$ and these are in minimal position in $T-P_n$,  the class $[c_n]_{T-P_n}$ is a bicorn of $[\gamma']_{T-P_n}$ and $[f^n(\gamma)]_{T-P_n}$.  In \cite{Rasmussen} A.~Rasmussen proves that the set of bicorns between two curves in the nonseparating curve graph of any finite-type surface satisfies the criterion of Masur--Schleimer \cite[Theorem 3.5]{MS}. In fact any bicorn will be a uniformly bounded distance $L\geq 0$ away from a geodesic \cite[Proposition 3.1]{Bowditch}. Therefore the distance between $[c_n]_{T-P_n}$ and  $[\gamma']_{T-P_n}$ is bounded by at most $K+L+1$, a bound independent of $n$. However, there is a $1$-coarsely Lipschitz 
 map of the (nonseparating) curve graph of $T-P_n$ to the Farey graph, simply by considering isotopy classes in $T$.  This contradicts our earlier observation that the sequence $[c_n]$ is unbounded in $\C(T)$, a contradiction. We conclude that $f$ acts parabolically on $\cd(T)$.\end{proof}

        \begin{remark}
          A modification of the construction (by post-composing with the time $t_0$-map $\varphi_{t_0} $ of the flow the
          homeomorphism $ f$) can be used to show that for any
          irrational $\alpha$ and any $t_0 \neq 0$ there is a
          homeomorphism acting parabolically with rotation set
          $ \{(t_0\alpha,t_0)\}$. 
        \end{remark}
    \begin{remark}[Parabolics with irrational slope]
          A further variation yields homeomorphisms whose rotation sets is a segment of irrational slope through the origin. Simply let $\chi\colon T \to [0,1]$ be a continuous bump function that is {\em surjective} and vanishes precisely on the suspension of the invariant Cantor $C$ of the Denjoy map. Then consider the time-$1$ map of the reparametrized Denjoy flow $\varphi_s(p)$ given by setting $s(t,p) = t(1-\chi(p))$.
          
          This flow has fixed points and agrees with the Denjoy flow on $C$. Hence one obtains points $(0,0)$ and $(1,\alpha)$ in the rotation set. Since $f$ preserves flow lines it is easy to see that all other elements of the rotation set are (positive) multiples of $(1,\alpha)$ and hence $\rho(f)$ is a segment of irrational slope.
        \end{remark}
\subsection{Proof of Theorem~\ref{thm:parabolicsexist}}
A similar construction can also be used to build homeomorphisms
of higher genus surfaces $S$ which act parabolically on $\cd(S)$, proving Theorem \ref{thm:parabolicsexist}. We give the details now, and discuss alternative constructions below.  

\begin{proof}[Proof of Theorem~\ref{thm:parabolicsexist}, first construction]
Let $S$ be a surface of genus $g\geq 2$, fix a hyperbolic structure on $S$, and let $\Lambda$ be a minimal filling geodesic lamination on $S$. 
Let $X$ be a vector field  
supported on the complement of $\Lambda$, so that the flow of $X$ pushes points into the cusps of the lamination.  This may be defined explicitly on each complementary region of $\Lambda$, modeled on a vertical flow supported on a standard ideal hyperbolic triangle with one vertex at infinity in the upper half plane.  Let $f$ be the time-one map of this flow.  Note that by cutting off the support of the flow, we may view $f$ as a $C^0$-limit of homeomorphisms supported on disks.

Let $\hat S \rightarrow S$ be a finite cover of $S$. Via pullback, $\hat S$ inherits a hyperbolic structure from $S$, and the preimage $\hat\Lambda$ is a minimal filling geodesic lamination of $\hat S$. We may lift our flow described above to $\hat S$ and write $\hat f$ for the time-one map. The minimality of $\hat\Lambda$ ensures the following: for any simple closed curves $\gamma$ and $\gamma'$ of $\hat S$, there exists $N$ such that for any $n>N$ we have that $\hat f^n \gamma$ intersects $\gamma'$. This is because $\gamma$ intersects the support of $\hat f$ (every half-leaf is dense in $\hat \Lambda$), moreover there is some subarc $c$ of $\gamma$ inside the support that connects different half-leaves of $\hat\Lambda$.  Deeper into the cusp, there is a subarc $c'$ of $\gamma'$ likewise connecting different half-leaves. By definition of $\hat f$, eventually $\hat f^n c$ intersects $c'$ for sufficiently large $n$.

It thus follows from Lemma~\ref{lem:small-diameter-one-cover} that any $f$-orbit in $\cd(S)$ has infinite diameter and so $f$ is not elliptic. On the other hand, $f$ is the $C^0$-limit of disk-supported maps. Since such maps
act elliptially on $\cd(S)$, $C^0$-continuity of the asymptotic translation length (Theorem~\ref{thm:continuity}) shows
that $f$ also has translation length $0$. This shows that it is parabolic.\end{proof} 

\begin{proof}[Proof of Theorem~\ref{thm:parabolicsexist}, second construction] Start with an abelian differential on $S$ and pick a slope $\lambda$ that defines a minimal (oriented) foliation $\mathcal{F}_\lambda$. The abelian differential endows $S$ with a translation surface structure, in particular a metric with zero curvature off the singularities, which we normalise with unit area. For simplicity we will assume that there are no saddle connections of slope $\lambda$.

Consider a smooth  vector field on $S$ with direction $\lambda$ with zeroes occurring only at the singularities of the abelian differential. Let $f$ be the time-$1$ map of the flow along the vector field.

We now prove that $f$ does not act hyperbolically on $\cd(S)$. To do this, we will find for any $n\in \NN$ a curve $\alpha$ such that $d^\dagger(\alpha,f^n \alpha)\leq 2$. Given $n\in \NN$, pick a (small) closed interval $J$ perpendicular to the slope $\lambda$, such that $\cup_{t\in [0,n]} \phi_t J$ is disjoint from the singularities, and such that the $\phi_t J$ are pairwise disjoint. The existence of such an interval follows from the fact that $\mathcal{F}_\lambda$ contains an infinite leaf. 
 Choose $\alpha$ to be a simple closed curve which is a union of an interval of slope $\lambda$, and a subinterval of $J$ (which is possible because there is a half-leaf of slope $\lambda$ through $J$ which is dense). Observe that the complement of $\alpha \cup f^n \alpha$ consists of two regions, one of which is a rectangle, foliated by the $\phi_t J$. It must be the case that the other region carries an essential simple closed curve disjoint from $\alpha \cup f^n \alpha$, and so we conclude that $d^\dagger(\alpha,f^n \alpha)\leq 2$ and thus $f$ is not hyperbolic.

To finish the proof we now prove that $f$ is not elliptic, by considering a curve containing a horizontal segment adjacent to a singularity.   For concreteness,   
define a curve $\alpha$ on $S$ by starting at a singularity $x$, following a line of slope $\lambda$ until one intersects a horizontal through the same singularity, then following that horizontal to $x$ in order to close the curve.  Thus, $\alpha$ is the union of a horizontal segment $\tau$, and a segment of slope $\lambda$ each of which have one endpoint at $x$.   
We claim that $d^\dagger(\alpha,f^n \alpha)$ is unbounded.  
To show this, suppose for contradiction that $d^\dagger(\alpha,f^n \alpha)\leq K$, for all $n\in\ZZ$. By Lemma~\ref{lem:small-diameter-one-cover} there is a finite-sheeted cover $\hat S$ of $S$ such that $\alpha$ and $f^n\alpha$ admit disjoint elevations, for any $n\in\ZZ$. 

Let $\mu$ be the half-leaf of $\mathcal{F}_\lambda$ emanating from $x$.  Note that every lift of $\mu$ to $S$ is dense in $S$.  
As $n$ increases, $f^n(\tau)$ converges on compact sets to $\mu$.   For $n$ sufficiently large, any elevation of $f^n(\tau)$ to $S$ will intersect every 
horizontal segment of fixed length, and thus intersects any elevation of $\tau$.  This contradicts that $\alpha$ and $f^n\alpha$ should have disjoint elevations, concluding the proof. \end{proof}

	\bibliographystyle{alpha}
	\bibliography{torus}
\end{document}